\documentclass{article}
\usepackage{amsmath,amsfonts,amssymb,amsthm}
\usepackage{mathtools}
\usepackage{dsfont}
\usepackage{enumerate}
\usepackage{esint}
\allowdisplaybreaks[4]

\newtheorem{theorem}{Theorem}[section]
\newtheorem{proposition}[theorem]{Proposition}
\newtheorem{lemma}[theorem]{Lemma}
\newtheorem{corollary}[theorem]{Corollary}

\newtheorem{remark}{Remark}[section]

\usepackage[colorlinks=true,linkcolor=blue,citecolor=green]{hyperref}

\usepackage[margin=2cm]{geometry}
\newlength{\bibitemsep}\setlength{\bibitemsep}{.0\baselineskip plus .05\baselineskip minus .05\baselineskip}
\newlength{\bibparskip}\setlength{\bibparskip}{0pt}
\let\oldthebibliography\thebibliography
\renewcommand\thebibliography[1]{%
  \oldthebibliography{#1}%
  \setlength{\parskip}{\bibitemsep}%
  \setlength{\itemsep}{\bibparskip}%
}


\numberwithin{equation}{section}

\begin{document}

\title{\Large\bf Non-uniqueness of the transport equation at high spacial integrability}
\author{Jingpeng Wu\thanks{Corresponding author. School of Mathematics and Statistics, Huazhong University of Science and Technology; Institute of Artificial Intelligence, Huazhong University of Science and Technology, Wuhan, 430074, China (jpwu\_postdoc@hust.edu.cn).}\and Xianwen Zhang\thanks{School of Mathematics and Statistics, Huazhong University of Science and Technology, Wuhan, 430074, China (xwzhang@hust.edu.cn)}}
\date{}
\maketitle

\begin{abstract}
In this paper, we show the non-uniqueness of the weak solution in the class $\rho\in L^{s}_tL^p_x$ for the transport equation driven by a divergence-free vector field $\boldsymbol{u}\in L^{\tilde{s}}_tW^{1,q}_x\cap L_t^{s'}L_x^{p'}$ happens in the range $1/p+1/q>1-\frac{p-1}{4(p+1)p}$ with some $\tilde{s}>1$, as long as $1\le s<\infty$, $p>1$. As a corollary, $L^{\infty}$ in time of the density $\rho$ is critical in some sense for the uniqueness of weak solution. Our proof is based on the convex integration method developed in \cite{MS20,CL21}.

\noindent{\bf Keywords:} Transport equation; non-uniqueness; convex integration.

\noindent{\bf MR Subject Classification:} 35A02; 35D30; 35Q35.
\end{abstract}

\section{Introduction}\label{sec1}

This paper deals with the problem of (non-)uniqueness of weak solution to the linear transport equation on the torus $\mathbb{T}^d:=\mathbb{R}^d\setminus\mathbb{Z}^d$ written by
\begin{equation}\label{eq-TE}
\left\{\begin{split}
&\partial_t\rho+\boldsymbol{u}\cdot\nabla\rho=0,\\
&\rho\vert _{t=0}=\rho_0,
\end{split}\right.
\end{equation}
where $\rho\colon[0,T]\times\mathbb{T}^d\to\mathbb{R}$ is the unknown density, $\rho_0$ is the initial data and $\boldsymbol{u}\colon[0,T]\times\mathbb{T}^d\to\mathbb{R}^d$ is a given divergence-free vector field, i.e.~$\operatorname{div} \boldsymbol{u}=0$ in the sense of distribution. In this case, the weak solution to \eqref{eq-TE} is defined as
\begin{equation*}
\int_{\mathbb{T}^d}\rho_0\phi(0,x)\,dx=\int_0^T\int_{\mathbb{T}^d}\rho(\partial_t\phi+\boldsymbol{u}\cdot\nabla\phi)\,dx\,dt\,\text{ for all }\phi\in C_c^{\infty}([0,T)\times\mathbb{T}^d).
\end{equation*}
Another equivalent definition is (see e.g.~\cite[p276]{DL88KE})
\begin{equation*}
\int_0^T\int_{\mathbb{T}^d}\rho(\partial_t\phi+\boldsymbol{u}\cdot\nabla\phi)\,dx\,dt=0\,\text{ for all }\phi\in C_c^{\infty}((0,T)\times\mathbb{T}^d),
\end{equation*}
and $\rho$ is continuous in time with $\lim_{t\to0^+}\rho(t)=\rho_0$ in the distributional sense.

Since the transport equation is linear, the existence of weak solutions can be obtained by the method of regularization even for very rough vector fields, see \cite[Proposition II.1]{DL89ODE}. However, to obtain the uniqueness, more complicated discussions will be involved. 

\subsection{Background}

In the smooth setting, the method of characteristics solves first-order PDE by converting the PDE into an appropriate system of ODE \cite[Sec.3.2]{Eva10}. In particular, for Lipschitz vector fields $u\in C_tW^{1,\infty}_x$, the solution to \eqref{eq-TE} can be given by the Lagrangian representation $\rho(t,x)=\rho_0(X(0,t,x))$ with the flow map $X$ solving the ODEs
\begin{equation}\label{eq-ODEs}
\left\{\begin{split}
\partial_tX(t,\tau,x)&=\boldsymbol{u}(t,X(t,\tau,x)),\\
X(\tau,\tau,x)&=x.
\end{split}\right.
\end{equation}
This link results in the existence and uniqueness of the PDE \eqref{eq-TE} by the Cauchy-Lipschitz (Picard-Linder\"of) theory for the ODE \eqref{eq-ODEs} (The Lipschitz condition on the vector fields can be replaced by one-side Lipschitz, $\log$-Lipschitz or Osgood condition, we refer to \cite{AL93} for the elaborate surveys). 

For non-Lipschitz vector fields, the link is less obvious and the uniqueness issue of \eqref{eq-TE} becomes subtler. The first breakthrough result traces back to the celebrated work of DiPerna and Lions \cite{DL89ODE}, they proved that for all $p\in[1,\infty]$, the Lagrangian representation and uniqueness hold in the class $L^{\infty}_tL^{p}_x$ for any given vector field $\boldsymbol{u}\in L^1_tW^{1,p'}_x$ with $\operatorname{div}\boldsymbol{u}\in L^{\infty}$, where $p'$ is the H\"older conjugate exponent of $p$. This result was extended to the case $\rho\in L^{\infty}_{t,x},\boldsymbol{u}\in L^1_tBV$ by Ambrosio \cite{Amb04} with deep tools from geometric measure theory. Whereafter, there are abundant researches for the theory of non-smooth vector fields and their applications on non-linear PDEs. For instance, the analogous results of \cite{DL89ODE,Amb04} have been established, for vector fields with gradient given by a singular integral in \cite{BC13} and for nearly incompressible $BV$ vector fields in \cite{BB19}, and the results in \cite{BC13} has been adapted to obtain the Lagrangian solutions for the Vlasov-Poisson system with $L^1$ data \cite{BBC16}, see also \cite{ACF17}. We refer to the works \cite{CL02,CLR03,LL04,CD08,CC16,CC21} and the surveys \cite{AC08,De08,Cri09,Amb17} for other important progresses and related results in this direction. 

For the non-uniqueness issue of \eqref{eq-TE}, two examples were given by DiPerna and Lions \cite{DL89ODE}, for an autonomous vector field $\boldsymbol{u}\in W^{1,p'}$ but $\operatorname{div}\boldsymbol{u}\notin L^{\infty}$ and for an autonomous divergence-free vector field $\boldsymbol{u}\in \cap_{0\le s<1}W^{s,1}$ but $\boldsymbol{u}\notin W^{1,1}$. Much later, based on the work \cite{Aiz78}, Depauw \cite{Dep03} constructed an example of a divergence-free vector field $\boldsymbol{u}\in L^1_{\rm loc}((0,T];BV(\mathbb{R}^2))$ but $\boldsymbol{u}\notin L^1(0,T;BV(\mathbb{R}^2))$, in the class of bounded densities. See also \cite{AF09,YZ17,ACM19}. In \cite{ABC13,ABC14},  Alberti, Bianchini and Crippa showed an example of an autonomous divergence-free vector field $\boldsymbol{u}\in \cap_{0\le \alpha<1}C^{0,\alpha}(\mathbb{R}^2)$ but $\boldsymbol{u}\notin C^{0,1}(\mathbb{R}^2)$, in the class of bounded densities. More recently, based on anomalous dissipation and mixing, Drivas, Elgindi, Iyer and Jeong proved non-uniqueness in the class $\rho\in L^{\infty}_tL^2_x$ for $\boldsymbol{u}\in L^1_tC^{1-}$ \cite{DEIJ20}. Roughly speaking, these results are proved by showing the non-uniqueness of the flow maps (the ODE level), with the violation of either the boundedness of $\operatorname{div}\boldsymbol{u}$ or the spacial $W^{1,1}_{\rm loc}$ regularity of $\boldsymbol{u}$.

Another approach is based on the convex integration technique (for complete summaries on this enormous theory, we refer to the surveys articles \cite{DS12,DS17b,DS19,BV19,BV21}), which gives counterexamples of uniqueness directly at the PDE level. The first non-uniqueness result with this technique was obtained by Crippa et al in \cite{CGSW15} using the framework of \cite{DS09}, but for vector field $\boldsymbol{u}$ was merely bounded. Later, a breakthrough result for the Sobolev vector field was obtained by Modena and Sz\'ekelyhidi \cite{MS18}. Based on their work, a series of works for the non-uniqueness to \eqref{eq-TE} have been done recently, we list the main functional setting of them below:  
\begin{enumerate}[(I).]
\item\cite{MS18}(Modena and Sz\'ekelyhidi): $\rho\in C_tL^p_x$, $\boldsymbol{u}\in C_t(W^{1,q}_x\cap L^{p'}_x)$ for $\frac{1}{p}+\frac{1}{q}>1+\frac{1}{d-1}$, $p>1$ and $d\ge 3$.
\item\cite{MS19}(Modena and Sz\'ekelyhidi): $\rho\in C_tL^1_x$, $\boldsymbol{u}\in C_tW^{1,q}_x\cap C_{t,x}$ for $q<d-1$ and $d\ge 3$.(The case $p=1$ of \cite{MS18})
\item\cite{MS20}(Modena and Sattig): $\rho\in C_tL^p_x$, $\boldsymbol{u}\in C_t(W^{1,q}_x\cap L^{p'}_x)$ for $\frac{1}{p}+\frac{1}{q}>1+\frac{1}{d}$ and $d\ge 2$. 
\item\cite{BCD21}(Bru\`e, Colombo and De Lellis): $\rho>0$, $\rho\in C_tL^p_x$, $\boldsymbol{u}\in C_t(W^{1,q}_x\cap L^{p'}_x)$ for $\frac{1}{p}+\frac{1}{q}>1+\frac{1}{d}$, $p>1$ and $d\ge 2$.
\item\cite{CL21}(Cheskidov and Luo): $\rho\in L^1_tL^p_x$, $\boldsymbol{u}\in L^1_tW^{1,q}_x\cap L^{\infty}_tL^{p'}_x$ for $\frac{1}{p}+\frac{1}{q}>1$, $p>1$ and $d\ge 3$.
\end{enumerate}
Comments:

Very recently, \cite{CC21} proves that the uniqueness result holds for $\boldsymbol{u}\in L^{\infty}_tW^{1,q}_x$ with $q>d$ in the class $\rho\in L^1_{t,x}$ under the assumption that the so called forward-backward integral curves of $\boldsymbol{u}$ are trivial. Whereas, the non-uniqueness result of \cite{MS20} holds for $\boldsymbol{u}\in L^{\infty}_tW^{1,q}_x$ with $q<d$ in the class $\rho\in C_tL^1_{x}$. Hence, roughly speaking when $p=1$, the regularity condition $\boldsymbol{u}\in L^{\infty}_tW^{1,q}_x$ with $q>d$ is optimal for the uniqueness of the weak solution in the class $L^1_{t,x}$. 

When $p>1$, the uniqueness is unclear in the range $1<\frac{1}{p}+\frac{1}{q}\le 1+\frac{1}{d}$ for $\rho\in C_tL^p_x$, $\boldsymbol{u}\in C_t(W^{1,q}_x\cap L^{p'}_x)$. However, \cite[Theorem 1.5]{BCD21} states that the uniqueness result holds for positive density $\rho\in L^{\infty}_tL^p_x$ and $\boldsymbol{u}\in L^1_tW^{1,q}_x$ with $\frac{1}{p}+\frac{1}{q}<1+\frac{q-1}{(d-1)q}$, which goes beyond the DiPerna-Lions range.

In \cite{CL21}, the non-uniqueness with the sharp spacial integrability ($1/p+1/q>1$) has been reached but at the expensive of time regularity. They proved the result using the convex integration scheme of \cite{MS18} combined with temporal intermittency and oscillations in the spirit of \cite{BV19b}. We refer to \cite{BV19b,CL21,BV21} for a more comprehensive discussion of the temporal intermittency.

A few months after we finished this paper, we learn that Cheskidov and Luo post a extremely nice result \cite{CL22} on arXiv, in which they show the non-uniqueness result for $u\in L_t^1W_x^{1,p}$, $\forall p<\infty$, $\rho\in\cap_{p<\infty,k\in\mathbb{N}}L_t^pC^k$, it also implies that the time-integrability assumption in the uniqueness of the DiPerna-Lions theory is sharp. In view of this surprising result, we have reconsidered the implications of our study, see Remark~\ref{rem-CLdiff}.

\subsection{Main results}

Based on the frameworks of \cite{MS20,CL21}, we obtain the following main result in this paper, which indicates that the non-uniqueness happens even in the range $1/p+1/q>1-\frac{p-1}{4(p+1)p}$, as long as $1\le s<\infty$, $p>1$.
\begin{theorem}\label{thm1.1}
Let $d\ge3$ and $p,q,s,\tilde{s}\in[1,\infty)$, $p',s'$ are H\"older conjugate exponents of $p,s$ respectively, satisfying $p>1$ and
\begin{equation}\label{eq-assum}
\frac{1}{p}+\frac{1}{q}>1-\frac{p-1}{4(p+1)p},\quad \tilde{s}\le \frac{s}{s/s'+(1+2\beta)/(1+4\beta)}
\end{equation}
with $\beta=(p-1)(d-1)/4p(p+1)$.

Then there exists a divergence-free vector field
\begin{equation*}
\boldsymbol{u}\in L^{\tilde{s}}(0,T;W^{1,q}(\mathbb{T}^d))\cap L^{s'}(0,T;L^{p^{\prime}}(\mathbb{T}^d)),
\end{equation*}
such that the uniqueness of \eqref{eq-TE} fails in the class $\rho\in L^s(0,T;L^{p}(\mathbb{T}^d))$.
\end{theorem}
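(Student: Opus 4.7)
The plan is to implement a convex integration iteration on the transport-defect equation
\[
\partial_t \rho_n + u_n\cdot\nabla\rho_n = -\operatorname{div} R_n,
\]
following the spatial Mikado-density framework of \cite{MS20} combined with the temporal intermittency construction of \cite{CL21}. I would formulate an inductive proposition: given a smooth triple $(\rho_n,u_n,R_n)$ at stage $n$ with prescribed bounds on $\|\rho_n\|_{L^s_tL^p_x}$, $\|u_n\|_{L^{\tilde s}_tW^{1,q}_x\cap L^{s'}_tL^{p'}_x}$ and on $\|R_n\|_{L^1_{t,x}}$ governed by a geometric sequence $\delta_n\to 0$, one produces $(\rho_{n+1},u_{n+1},R_{n+1})$ with $\|R_{n+1}\|_{L^1_{t,x}}\le\delta_{n+1}$ and with the perturbation increments obeying predetermined norm bounds. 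Summing the resulting telescoping series from two inequivalent starting defects then delivers two distinct $L^s_tL^p_x$ solutions of \eqref{eq-TE} sharing the same initial datum.

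At each stage the increments take the schematic form
\[
\rho_{n+1}-\rho_n \;=\; \sum_{k} a_k(t,x)\, g_\kappa(\sigma t)\, \Theta_{\mu,\lambda}(k,x), \qquad u_{n+1}-u_n \;=\; \sum_{k} a_k(t,x)\, h_\kappa(\sigma t)\, W_{\mu,\lambda}(k,x) \;+\; \text{correctors},
\]
where $\Theta_{\mu,\lambda},W_{\mu,\lambda}$ are concentrated Mikado building blocks at spatial oscillation $\lambda$ and concentration $\mu$, the pair $(g_\kappa,h_\kappa)$ is a $1$-periodic temporally intermittent profile concentrated on a set of measure $\sim\kappa^{-1}$ with $\int g_\kappa h_\kappa = 1$, and the amplitudes $a_k$ are furnished by the standard geometric lemma applied to $-R_n/|R_n|+\operatorname{Id}$. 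The correctors (divergence, temporal, and Nash) are there to enforce $\operatorname{div}(u_{n+1}-u_n)=0$ and to balance the part of $\partial_t(\rho_{n+1}-\rho_n)$ not compensated by the low-frequency plateau of $g_\kappa h_\kappa\Theta_{\mu,\lambda}\cdot W_{\mu,\lambda}$ against $\operatorname{div}R_n$.

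The heart of the argument is the parameter optimization. The spatial exponent $\beta=(p-1)(d-1)/4p(p+1)$ emerges as the maximal concentration budget for which the product $\|\Theta_{\mu,\lambda}\|_{L^p}\|W_{\mu,\lambda}\|_{W^{1,q}}$ and the quadratic self-interaction error remain simultaneously controlled; it is exactly the sharpening beyond the plain Mikado budget that converts the Cheskidov–Luo threshold $1/p+1/q>1$ into $1/p+1/q>1-\tfrac{p-1}{4(p+1)p}$. The temporal parameter $\kappa$ is then slaved to $(\mu,\lambda,\sigma)$ so as to absorb the time derivative $\sigma\, g_\kappa'(\sigma t)$ via an anti-divergence in a suitable $L^{\tilde s}_t$ norm, and the interaction error splits between an oscillation loss (handled by a high-frequency inverse divergence at frequency $\lambda$) and a temporal low-frequency error (handled by a one-dimensional anti-derivative in time). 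The two inequalities in \eqref{eq-assum} arise exactly as the feasibility conditions for this joint spatial-temporal balance: the first is the spatial Mikado budget at the optimal $\beta$, and the second is the upper bound on $\tilde s$ coming from the cost of moving the temporal intermittency onto $u$ while keeping $\rho\in L^s_tL^p_x$ in line with $u\in L^{s'}_tL^{p'}_x$.

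The main obstacle will be pushing strictly below the DiPerna–Lions line $1/p+1/q=1$: one cannot rely solely on spatial concentration, since the pure spatial Mikado budget is capped at $1/p+1/q>1$, and one cannot rely solely on temporal intermittency, since it does not improve the spatial Hölder duality. The required gain of $\tfrac{p-1}{4p(p+1)}$ must therefore be extracted from a careful joint concentration of $\Theta_{\mu,\lambda}$ and $W_{\mu,\lambda}$ along transverse directions, combined with a sharpened bilinear anti-divergence estimate in the spirit of the $L^r$-improving bounds of \cite{MS20}, applied inside the temporal intermittency framework of \cite{CL21}. The delicate bookkeeping is to verify that all error terms—temporal corrector, Nash corrector, high-frequency interaction, and low-frequency mismatch—remain summable once $\beta$ is pushed to its critical value $(p-1)(d-1)/4p(p+1)$ and $\tilde s$ to the upper bound in \eqref{eq-assum}, while still keeping $\tilde s>1$ so that $u$ is a genuine $L^{\tilde s}_tW^{1,q}_x$ vector field.
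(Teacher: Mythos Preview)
Your overall plan—convex integration on the continuity-defect equation, stationary Mikado building blocks, temporal intermittency, and an iterative proposition—matches the paper's architecture. The paper also reduces Theorem~\ref{thm1.1} to an approximation statement (its Theorem~1.3) rather than running two iterations from ``inequivalent starting defects,'' but that difference is cosmetic.

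Where your proposal goes wrong is in the diagnosis of the mechanism that pushes the threshold below the DiPerna--Lions line. You attribute the gain of $\tfrac{p-1}{4(p+1)p}$ to a ``careful joint concentration of $\Theta_{\mu,\lambda}$ and $W_{\mu,\lambda}$ along transverse directions.'' No such refinement of the Mikado blocks is present or needed: the paper uses the \emph{same} stationary Mikado densities and fields as \cite{CL21}. The improvement is purely a parameter-balance phenomenon driven by two things. First, the $L^{\tilde s}_tW^{1,q}_x$ norm of the velocity perturbation picks up the factor $\|\bar g_\kappa\|_{L^{\tilde s}_t}\lesssim\kappa^{1/s'-1/\tilde s}$, which is \emph{small} whenever $\tilde s<s'$; this temporal gain—not any spatial trick—is what makes $\gamma_1=(d-1)\bigl[1-\tfrac{p-1}{4(p+1)p}-(\tfrac1p+\tfrac1q)\bigr]<0$ possible (the paper's Lemma~4.5). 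Second, the size of $\kappa$ one may afford is capped by the temporal error $\boldsymbol R_{\rm tem}$, which carries a loss $\lambda\kappa^{1/s}$ from $\partial_t\tilde g_\kappa(\lambda t)$. The paper's genuinely new estimate (its Lemma~4.8) exploits the potential structure $\Phi_j^\mu=\operatorname{div}\boldsymbol\Omega_j^\mu$ together with the Calder\'on--Zygmund inequality $\|\mathcal R\operatorname{div}\,\cdot\|_{L^r}\lesssim\|\cdot\|_{L^r}$ at $r=\tfrac{p+1}{2}>1$ to squeeze an extra factor $\mu^{-1}$ out of $\|\mathcal R\Phi_j^\mu\|_{L^1}$. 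That single $\mu^{-1}$ is what permits the choice $\alpha=1+2\beta$, $\beta=\tfrac{(p-1)(d-1)}{4p(p+1)}$ and produces both inequalities in \eqref{eq-assum}.

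If you pursued ``transverse joint concentration'' instead, you would not recover the stated threshold; the proof would stall at the $\boldsymbol R_{\rm tem}$ estimate. The correct fix is to replace that paragraph with the two-step mechanism above: temporal intermittency on $\boldsymbol u$ at integrability $\tilde s<s'$ for the gain, and the potential/Calder\'on--Zygmund improvement of the temporal error for the constraint.
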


\begin{remark}
On the one hand, Theorem~\ref{thm1.1} means that even for the spacial integrability higher than the DiPerna-Lions regime $(1-\frac{p-1}{4(p+1)p}<1/p+1/q<1)$, non-uniqueness still happens, which might be new in contrast to the previous works \cite{MS20,BCD21,CL21}. On the other hand, notice for $p>1$ and $1/s+1/s'=1$,
\begin{equation*}
s'>\frac{s}{s/s'+(1+2\beta)/(1+4\beta)}>1,
\end{equation*}
$\tilde{s}$ might be taken greater than 1, hence we improve the result of \cite{CL21} in two aspects: the restrictions on time integrability ($s,\tilde{s}$) and on spacial integrability ($p,q$).
\end{remark}

\begin{remark}
After suitable parameters setting, the condition \eqref{eq-assum} might be replaced by
\begin{equation}\label{eq-assum-2}
\frac{1}{p}+\frac{1}{q}>\frac{1}{p},\quad \tilde{s}\equiv 1,
\end{equation}
i.e., we allow the spatial integrability of $u$ can be arbitrary $q\in[1,\infty)$. However, since the low time integrability $\tilde{s}\equiv 1$, Cheskidov and Luo's work \cite{CL22} fully covers our result in this case. We mention that a comfortable condition on $\tilde{s}$ might be $\tilde{s}\le s'$. However, it can not be reached in this paper, since the second condition in  \eqref{eq-assum} or \eqref{eq-assum-2} plays a significant role in the proof of Lemma~\ref{lem-924-4.8} (When $\tilde{s}\to s'$, we need to set $\alpha\to 0$ in the proof of Lemma~\ref{lem-924-4.8}, then the balance of parameters setting in Sec.\ref{sec.4.3} will be upset).
\end{remark}

\begin{remark}\label{rem-CLdiff}
Cheskidov and Luo's work \cite{CL22} is very exciting, since they improve extremely the spatial regularity for the non-uniqueness. However, this is achieved by abandoning all the time regularity of the vector field $u$ except $L^1$ integrability. A significant difference between Theorem~\ref{thm1.1} and Cheskidov, Luo's result \cite{CL22} is that we can provide non-uniqueness with a little higher temporal integrability of $\nabla u$ ($\tilde{s}>1$). To the best of the authors' knowledge, there is no result of the non-uniqueness for $u\in L_t^{\tilde{s}}W_x^{1,q}\cap L_t^{s'}L_x^{1,p'}$, $\rho\in L_t^{s}L_x^p$ under the range $1/q+1/p\le 1+1/d$, $1<\tilde{s}\le s'<\infty$ before Theorem~\ref{thm1.1}. Notice also when $\tilde{s}=s'=\infty$, \cite{CC21} has proved the uniqueness for $\boldsymbol{u}\in L^{\infty}_tW^{1,q}_x$ with $q>d$, $\rho\in L^1_{t,x}$ under some additional assumptions of integral curves of $\boldsymbol{u}$. Hence it might be an interesting and valuable problem to consider uniqueness/non-uniqueness under high temporal integrability of $\nabla u$ and high spatial integrability of $\rho\nabla u$ ($1/p+1/q\le 1+1/d$).
\end{remark}

\begin{remark}
It seems possible to extend Theorem~\ref{thm1.1} to the border case $p=1$ and to the transport-diffusion equation
\begin{equation*}
\partial_t\rho+\boldsymbol{u}\cdot\nabla\rho-\Delta\rho=0
\end{equation*}
by utilizing the technique in \cite{MS19,MS20}. We finally mention that the two dimensional case $d=2$ is not treated in this paper due to the stationary Mikado flow, see Remark~\ref{rem.3.4}. However, thanks to the recent tricks given by Cheskidov and Luo \cite{CL22}, our result can be extended to the two dimensional case without difficulty. 
\end{remark}

Notice $1-\frac{p-1}{4(p+1)p}<1$ as long as $p>1$. Hence, combine the uniqueness result in \cite[Corollary II.1]{DL89ODE}, we obtain immediately from Theorem~\ref{thm1.1} that at least in the following sense, the $L^{\infty}$ in time of the density $\rho$ is critical for the uniqueness of weak solutions to \eqref{eq-TE}.

\begin{corollary}\label{coro1.2}
Let $d\ge3$, $p\in(1,\infty)$ and $s\in[1,\infty]$. The following holds.
\begin{enumerate}[(i).]
\item If $s<\infty$, then there exists a divergence-free vector field
\begin{equation*}
\boldsymbol{u}\in L^{1}(0,T;W^{1,p'}(\mathbb{T}^d))\cap L^{s'}(0,T;L^{p'}(\mathbb{T}^d)),
\end{equation*}
such that the uniqueness of \eqref{eq-TE} fails in the class $L^s(0,T;L^{p}(\mathbb{T}^d))$.
\item If $s=\infty$, then for any divergence-free field $\boldsymbol{u}\in L^{1}(0,T;W^{1,p'}(\mathbb{T}^d))$ and any initial data $\rho_0\in L^{p}(\mathbb{T}^d)$, there exists a uniqueness solution of \eqref{eq-TE} in the class $L^{\infty}(0,T;L^{p}(\mathbb{T}^d))$.
\end{enumerate}
\end{corollary}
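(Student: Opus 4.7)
The plan is to deduce Corollary~\ref{coro1.2} from Theorem~\ref{thm1.1} together with the classical DiPerna--Lions uniqueness theorem, so the main work is a parameter check rather than any new construction.

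For part (i), I would specialize Theorem~\ref{thm1.1} to the choice $q=p'$ and $\tilde{s}=1$. The spatial condition in \eqref{eq-assum} becomes
\begin{equation*}
\frac{1}{p}+\frac{1}{q}=\frac{1}{p}+\frac{1}{p'}=1>1-\frac{p-1}{4(p+1)p},
\end{equation*}
which holds for every $p>1$, so the first inequality of \eqref{eq-assum} is automatic. For the temporal condition I would verify that the upper bound
\begin{equation*}
\frac{s}{s/s'+(1+2\beta)/(1+4\beta)}\ge 1,
\end{equation*}
holds under no restriction on $s\in[1,\infty)$: when $s>1$ we use $s/s'=s-1$ and the estimate $(1+2\beta)/(1+4\beta)\le 1$, and when $s=1$ the quantity $s/s'=0$ and the same estimate applies. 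Hence $\tilde{s}=1$ is always admissible, and Theorem~\ref{thm1.1} immediately produces a divergence-free field
\begin{equation*}
\boldsymbol{u}\in L^{1}(0,T;W^{1,p'}(\mathbb{T}^d))\cap L^{s'}(0,T;L^{p'}(\mathbb{T}^d))
\end{equation*}
for which uniqueness in $L^s(0,T;L^p(\mathbb{T}^d))$ fails, which is exactly the assertion of (i).

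For part (ii), the target is the standard DiPerna--Lions uniqueness statement: with $\operatorname{div}\boldsymbol{u}=0$ (so in particular $\operatorname{div}\boldsymbol{u}\in L^\infty$), a divergence-free vector field $\boldsymbol{u}\in L^{1}(0,T;W^{1,p'}(\mathbb{T}^d))$ has a unique renormalized solution of \eqref{eq-TE} in $L^\infty(0,T;L^p(\mathbb{T}^d))$ for each initial datum $\rho_0\in L^p(\mathbb{T}^d)$, and by the equivalence of renormalized and distributional solutions in this functional setting, uniqueness holds in the full class of weak solutions. I would invoke this directly via \cite[Corollary II.1]{DL89ODE}, with no further argument needed.

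There is no real technical obstacle in this corollary; the only point requiring care is the temporal bookkeeping in part (i), namely checking that $\tilde{s}=1$ meets the second inequality in \eqref{eq-assum} uniformly in $s\in[1,\infty)$, which I handled above. The substance of the corollary is the observation that these two results meet exactly at the $L^\infty$-in-time endpoint, so together they identify $L^\infty_tL^p_x$ as the critical integrability class separating uniqueness from non-uniqueness for the DiPerna--Lions exponent $q=p'$.
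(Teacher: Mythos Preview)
Your proposal is correct and takes essentially the same approach as the paper: the paper states just before the corollary that it follows immediately from Theorem~\ref{thm1.1} (for part (i)) together with \cite[Corollary~II.1]{DL89ODE} (for part (ii)). You have simply made explicit the parameter choice $q=p'$, $\tilde{s}=1$ and verified the two conditions in \eqref{eq-assum}, which the paper leaves to the reader.
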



We identify $[0,T]$ with an 1-dimensional torus and the time-periodic function $f$ on $[0,T]$ means $f(t+nT)=f(t)$ for all $n\in\mathbb{Z}$. Theorem~\ref{thm1.1} follows immediately from the following theorem.
\begin{theorem}\label{thm1.3}
Let $d\ge3$ and $p,q,s,\tilde{s}\in[1,\infty)$, $p',s'$ are H\"older conjugate exponents of $p,s$ respectively, satisfying $p>1$ and \eqref{eq-assum}. For any $\epsilon>0$ and  any time-periodic $\tilde\rho\in C^{\infty}([0,T]\times\mathbb{T}^d)$ with constant mean
\begin{equation*}
\fint_{\mathbb{T}^d}\tilde\rho(t,x)\,dx=\fint_{\mathbb{T}^d}\tilde\rho(0,x)\,dx\text{ for all }t\in[0,T],
\end{equation*}
there exists a divergence-free vector field $\boldsymbol{u}$ and a density $\rho$ such that the following holds.
\begin{enumerate}[(i).]
\item $\boldsymbol{u}\in L^{\tilde{s}}(0,T;W^{1,q}(\mathbb{T}^d))\cap L^{s'}(0,T;L^{p^{\prime}}(\mathbb{T}^d))$ and $\rho\in L^s(0,T;L^{p}(\mathbb{T}^d))$.
\item $\rho(t)$ is continuous in the distributional sense and for $t=0,T$, $\rho(t)=\tilde\rho(t)$.
\item $(\rho,\boldsymbol{u})$ is a weak solution to \eqref{eq-TE} with initial data $\tilde\rho(0)$.
\item The deviation of $L^p$ norm is small on average: $\|\rho-\tilde\rho\|_{L^s_tL^p_x}\le\epsilon$.
\end{enumerate}
\end{theorem}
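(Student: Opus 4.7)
The plan is to prove Theorem~\ref{thm1.3} by a convex integration iteration in the spirit of~\cite{MS20,CL21}. I would build a sequence of smooth triples $(\rho_q,\boldsymbol{u}_q,R_q)$ solving the transport-defect system
\begin{equation*}
\partial_t \rho_q + \boldsymbol{u}_q\cdot\nabla \rho_q = -\operatorname{div} R_q,
\end{equation*}
with $\operatorname{div}\boldsymbol{u}_q=0$ and $R_q$ a vector-valued defect that decays to zero in $L^1_{t,x}$. The base case takes $\rho_0=\tilde\rho$, $\boldsymbol{u}_0=0$ and $R_0=\nabla\Delta^{-1}(\partial_t\tilde\rho)$, which is well-defined because $\tilde\rho$ has constant mean. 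Telescoping $\rho_{q+1}-\rho_q$ and $\boldsymbol{u}_{q+1}-\boldsymbol{u}_q$ in the target norms, while keeping $\|R_q\|_{L^1_{t,x}}$ geometrically summable, yields the weak solution $(\rho,\boldsymbol{u})$; (iv) follows by choosing the initial frequency $\lambda_0$ large enough that $\|\rho_1-\rho_0\|_{L^s_tL^p_x}$ dominates a small geometric tail.

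\paragraph{Building blocks.}
The perturbation at stage $q+1$ would be of the schematic form
\begin{equation*}
\rho_{q+1}-\rho_q=\sum_\xi a_\xi\,g_{\mu}(t)\,\Theta_\xi(\lambda x)+\text{correctors},\qquad \boldsymbol{u}_{q+1}-\boldsymbol{u}_q=\sum_\xi a_\xi\,h_{\mu}(t)\,\boldsymbol{W}_\xi(\lambda x)+\text{correctors},
\end{equation*}
where $\Theta_\xi,\boldsymbol{W}_\xi$ are the stationary spatial Mikado density/velocity profiles from~\cite{MS20}, concentrated on tubes of radius $\sim\lambda^{-1+\beta'}$ about direction $\xi$ and satisfying the algebraic identity $\sum_\xi\fint\Theta_\xi\boldsymbol{W}_\xi\,dx\approx\mathrm{Id}$ that cancels the low-frequency part of $R_q$. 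The profiles $g_\mu,h_\mu$ are intermittent in time as in~\cite{CL21,BV19b}, concentrated on a fraction $\sim\mu^{-\alpha}$ of each period of length $\mu^{-1}$ and normalised so that $g_\mu h_\mu$ averages to a constant while $\|g_\mu\|_{L^s_t}$ and $\|h_\mu\|_{L^{s'}_t}$ are traded asymmetrically; this asymmetry is exactly the lever that lets us push past the DiPerna--Lions borderline $1/p+1/q=1$.

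\paragraph{Error decomposition.}
The new defect $R_{q+1}$ would split as $R_{\mathrm{lin}}+R_{\mathrm{cor}}+R_{\mathrm{osc}}+R_{\mathrm{time}}$: the linear transport error from $\boldsymbol{u}_q\cdot\nabla(\rho_{q+1}-\rho_q)$, the corrector errors, the oscillation residue from the high-frequency part of $(\rho_{q+1}-\rho_q)(\boldsymbol{u}_{q+1}-\boldsymbol{u}_q)$, and the temporal error from $\partial_t(\rho_{q+1}-\rho_q)$. Each would be estimated in $L^1_{t,x}$ via the anti-divergence operator $\mathcal{R}$ of~\cite{MS20} (gaining $\lambda^{-1}$ and removing the mean) together with integration by parts in $t$ against the zero-mean antiderivative of $g_\mu-\fint g_\mu$ (gaining $\mu^{-1}$). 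Combined with the block scalings $\|\Theta_\xi\|_{L^p_x}\|\boldsymbol{W}_\xi\|_{L^q_x}\sim\lambda^{(d-1)(1-1/p-1/q)(1-\beta')}$ and the temporal asymmetry $\|h_\mu\|_{L^{\tilde s}_t}\ll\|h_\mu\|_{L^{s'}_t}$, these yield the inductive decay $\|R_{q+1}\|_{L^1_{t,x}}\le\delta_{q+1}\ll\delta_q$ along a geometrically spaced sequence $\lambda_q\to\infty$.

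\paragraph{Parameter balance --- the main obstacle.}
Simultaneously enforcing control of $\|\rho_{q+1}-\rho_q\|_{L^s_tL^p_x}$, $\|\boldsymbol{u}_{q+1}-\boldsymbol{u}_q\|_{L^{\tilde s}_tW^{1,q}_x\cap L^{s'}_tL^{p'}_x}$, and the geometric decay of $R_q$ produces a system of inequalities on $(\lambda,\mu,\alpha,\beta')$. The specific exponent $\beta=(p-1)(d-1)/(4p(p+1))$ in~\eqref{eq-assum} is the value that equilibrates the $L^p$ cost of the density block against the $L^q$ cost of one spatial derivative of the velocity block through the extra gain $\lambda^{-\beta}$ bought from spatial concentration, while the constraint $\tilde s\le s/\bigl(s/s'+(1+2\beta)/(1+4\beta)\bigr)$ records the leftover budget after temporal concentration has been paid for in $L^{s'}_tL^{p'}_x$. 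The main obstacle I expect is precisely this balance, concentrated in the velocity-gradient bound (anticipated as Lemma~\ref{lem-924-4.8}): a sharp uniform estimate of $\|\nabla(\boldsymbol{u}_{q+1}-\boldsymbol{u}_q)\|_{L^{\tilde s}_tL^q_x}$ forces $\alpha$ to stay strictly positive, so $\tilde s$ cannot be pushed up to $s'$ without losing the spatial improvement --- exactly the trade-off recorded in~\eqref{eq-assum}. The detailed realisation follows the scheme of Sec.~\ref{sec.4.3}, choosing $\lambda_q$ double-exponential and $\mu_q=\lambda_q^{\gamma}$ for a $\gamma$ finely tuned to $(p,q,s,\tilde s)$.
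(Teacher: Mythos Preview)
Your plan is essentially the paper's: iterate a continuity--defect equation starting from $(\tilde\rho,0,\mathcal{R}\partial_t\tilde\rho)$, add perturbations built from stationary Mikado blocks combined with Cheskidov--Luo temporal intermittency, and telescope. Three points deserve adjustment.

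\textbf{The role of Lemma~\ref{lem-924-4.8} is misidentified.} That lemma is the $L^1$ bound on $\boldsymbol{R}_{\rm tem}$, the error coming from the \emph{time derivative of the density perturbation}, not the velocity-gradient estimate. The $L^{\tilde s}_tW^{1,q}_x$ bound on $\boldsymbol w$ (Lemmas~\ref{lem4-5}--\ref{lem4-6}) is where the \emph{first} inequality in~\eqref{eq-assum} enters: temporal concentration helps there, since $\|\bar g_\kappa\|_{L^{\tilde s}_t}\sim\kappa^{1/s'-1/\tilde s}$ is small. The price is that $\|\partial_t\tilde g_\kappa\|_{L^1_t}\sim\kappa^{1/s}$ grows, and this hits $\boldsymbol{R}_{\rm tem}$; the \emph{second} inequality in~\eqref{eq-assum} is exactly the condition $\kappa^{1/s}\le\mu^{1+4\beta}$ needed in Lemma~\ref{lem-924-4.8}. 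So the tension is: more temporal concentration improves $W^{1,q}$ but worsens $R_{\rm tem}$, and the ceiling on $\tilde s$ records the latter constraint, opposite to what you wrote.

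\textbf{No double-exponential sequence.} The paper packages the iteration as Proposition~\ref{prn3.1}, which carries a free scaling parameter $\nu>0$ with $\|\theta\|_{L^s_tL^p_x}\le\nu M\|\boldsymbol R\|_{L^1}^{1/p}$ and $\|\boldsymbol w\|_{L^{s'}_tL^{p'}_x}\le\nu^{-1}M\|\boldsymbol R\|_{L^1}^{1/p'}$. One then picks $\nu_n$ so that $\nu_n\delta_n^{1/p}\sim\epsilon\delta_n^{1/2}$, which decouples the smallness in~(iv) from the defect decay and avoids any super-exponential bookkeeping.

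\textbf{Property~(ii) needs an explicit mechanism.} Support each density perturbation in a compact $I_r\subset(0,T)$ (so the limit matches $\tilde\rho$ at the endpoints) and prove a uniform-in-$t$ weak bound $\bigl|\int_{\mathbb{T}^d}\theta(t,x)\phi(x)\,dx\bigr|\le\delta\|\phi\|_{C^N}$; the latter is what delivers distributional continuity in time of the limit $\rho$.
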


\noindent\textbf{Proof of Theorem~\ref{thm1.1}.} Let $\bar{\rho}\in C_0^{\infty}(\mathbb{T}^d)$ with $\|\bar{\rho}\|_{L^p_x}=1$. We take $\tilde\rho=\chi(t)\bar{\rho}(x)$ with $\chi\in C^{\infty}([0,T];[0,1])$ satisfying $\chi=1$ if $\vert  t-\frac{T}{2}\vert \le \frac{T}{4}$ and $\chi=0$ if $\vert  t-\frac{T}{2}\vert  \ge\frac{3T}{8}$. We apply Theorem~\ref{thm1.3} with $\epsilon=\frac{1}{4}(\frac{T}{4})^{1/s}$ and obtain $(\rho,\boldsymbol{u})$ solving \eqref{eq-TE} with $\rho_0\equiv0$. By the choice of $\epsilon$, we claim that $\rho$ cannot have a constant $L^p_x$ norm and obviously $\rho\not\equiv 0$, which implies the non-uniqueness (as well as the existence of non-renormalized solution).

Indeed, assume $\|\rho(t)\|_{L^p}\equiv C$ for some $C>0$. On the one hand, due to $\|\rho-\tilde\rho\|_{L^s_tL^p_x}\le \frac{1}{4}(\frac{T}{4})^{1/s}$, we have
\begin{equation*}\frac{T}{2}\vert C-1\vert \le\int_{\frac{T}{4}}^{\frac{3T}{4}}\|\rho(t)-\tilde\rho(t)\|_{L^p_x}\,dt\le\big(\frac{T}{2}\big)^{1/s'}\|\rho-\tilde\rho\|_{L^s_tL^p_x}\le\frac{T}{8}, \end{equation*}
hence $\vert C-1\vert \le 1/4$, which implies $C>3/4$.

On the other hand,
\begin{equation*}
\frac{T}{8}\ge\big(\frac{T}{4}\big)^{1/s'}\|\rho-\tilde\rho\|_{L^s_tL^p_x}\ge\int_{[0,\frac{T}{8}]\cup[\frac{7T}{8},T]}\|\rho(t)-\tilde\rho(t)\|_{L^p_x}\,dt=\frac{T}{4}C,
\end{equation*}
hence $C\le 1/2$, in contradiction with $C>3/4$, we prove the claim. \qed

\subsection{Notations}

The norms of $L^p(\mathbb{T}^d)$, $L^s(0,T)$, $L^s(0,T;L^p(\mathbb{T}^d))$ will be denoted standardly as $\|\cdot\|_{L^p_x}$, $\|\cdot\|_{L^s_t}$, $\|\cdot\|_{L^s_tL^p_x}$ or just $\|\cdot\|_{L^p}$ when there is no confusion. The norm of $C^k([0,T]\times\mathbb{T}^d)$ will be denoted as $\|\cdot\|_{C^k}$.

For any $f\in L^1(\mathbb{T}^d)$, its spacial mean is $\fint_{\mathbb{T}^d}f\,dx=\int_{\mathbb{T}^d}f\,dx$ and denote simply as $\fint f$. We denote $C_0^{\infty}(\mathbb{T}^d)$ as the space of smooth periodic functions with zero mean.

Denote the standard partial differential operators with multiindex ${\boldsymbol{k}}\in\mathbb{N}^d$ as $\partial^{\boldsymbol{k}}:=\partial_{x_1}^{k_1}\cdots\partial_{x_d}^{k_d}$. For any $f\in C^{\infty}(\mathbb{T}^d)$, the obvious facts are $\|\partial^{\boldsymbol{k}}(f(\sigma\cdot))\|_{L^p}=\sigma^{\vert \boldsymbol{k}\vert }\|\partial^{\boldsymbol{k}}f\|_{L^p}$ for any $\forall p\in[1,\infty)$, $\forall\sigma\in\mathbb{N}_+$ and of course $\partial^{\boldsymbol{k}}f\in C_0^{\infty}(\mathbb{T}^d)$ as long as ${\boldsymbol{k}}\ne 0$.

$a\lesssim b$ will be denoted as $a\le Cb$ with some inessential constant $C$. If the constant depends on some quantities, for instance $r$, it will be denoted as $C_{r}$, and $a\lesssim_r b$ means $a\le C_{r} b$.

$C_*$ represents a positive constant that might depend on the old solution $(\rho,\boldsymbol{u},\boldsymbol{R})$ and the constant $N$ but never on $\nu,\delta$ given in Proposition~\ref{prn3.1}. $C_*$ may change from line to line.

\section{Main Proposition and the Proof of Theorem~\ref{thm1.3}}\label{sec3}

Without loss of generality, in the rest of the paper, we assume $T=1$ and identify the time interval $[0,1]$ with an 1-dimensional torus.

We follow the framework of \cite{CL21} (see \cite{MS18} for the earliest version) to obtain space-time periodic approximate solutions $(\rho,\boldsymbol{u},\boldsymbol{R})$ to the transport equation by solving the continuity-defect equation
\begin{equation}\label{eq-CDE}
\left\{
\begin{split}
&\partial_t\rho+\operatorname{div}(\rho\boldsymbol{u})=\operatorname{div}\boldsymbol{R},\\
&\operatorname{div} \boldsymbol{u}=0,
\end{split}\right.
\end{equation}
where $\boldsymbol{R}\colon[0,1]\times\mathbb{T}^d\to\mathbb{R}^d$ is called the defect field. 

For any $0<r<1$, denote $I_r:=[r,1-r]$. To build a iteration scheme for proving Theorem~\ref{thm1.3}, we will construct the small perturbations on $I_r\times\mathbb{T}^d$ of $(\rho,\boldsymbol{u})$ to obtain a new solution $(\rho^1,\boldsymbol{u}^1,\boldsymbol{R}^1)$ such that the new defect field $\boldsymbol{R}^1$ has small $L^1_{t,x}$ norm. This is the following main proposition of the paper.

\begin{proposition}\label{prn3.1}
Let $d\ge3$ and $p,q,s,\tilde{s}\in[1,\infty)$, $p',s'$ are H\"older conjugates of $p,s$ respectively, satisfying $p>1$ and \eqref{eq-assum}. There exist a universal constant $M>0$ and a large integer $N\in\mathbb{N}$ such that the following holds.

Suppose $(\rho,\boldsymbol{u},\boldsymbol{R})$ is a smooth solution of \eqref{eq-CDE} on $[0,1]$. Then for any $\delta,\nu>0$, there exists another smooth solution $(\rho^1,\boldsymbol{u}^1,\boldsymbol{R}^1)$ of \eqref{eq-CDE} on $[0,1]$ which fulfills the estimates
\begin{align*}
\|\rho^1-\rho\|_{L^s_tL^p_x}&\le\nu M\|\boldsymbol{R}\|^{1/p}_{L^1_{t,x}},\\
\|\boldsymbol{u}^1-\boldsymbol{u}\|_{L^{s'}_tL^{p'}_x}&\le\nu^{-1} M\|\boldsymbol{R}\|^{1/p'}_{L^1_{t,x}},\\
\|\boldsymbol{u}^1-\boldsymbol{u}\|_{L^{\tilde s}_tW^{1,q}_x}&\le\delta,\quad\|\boldsymbol{R}^1\|_{L^1_{t,x}}\le\delta.
\end{align*}
In addition, the density perturbation $\rho^1-\rho$ has zero spacial mean and satisfies
\begin{align}
\Big\vert  \int_{\mathbb{T}^d}(\rho^1-\rho)(t,x)\phi(x)\,dx\Big\vert  &\le\delta\|\phi\|_{C^N}\quad\forall t\in[0,1],\,\forall\phi\in C^{\infty}(\mathbb{T}^d),\label{eq-10}\\
\operatorname{supp}_t(\rho^1-\rho)&\in I_r\,\text{ for some }r>0.\label{eq-11}
\end{align}
\end{proposition}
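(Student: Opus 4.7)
The plan is to follow the convex integration scheme of Modena--Sattig \cite{MS20} combined with the temporal intermittency device of Cheskidov--Luo \cite{CL21}, introducing an extra degree of freedom that will let me push $\tilde{s}$ above $1$. Starting from a space--time mollification $(\rho_\ell,\boldsymbol{u}_\ell,\boldsymbol{R}_\ell)$ of the given smooth triple at a scale $\ell$ that will eventually be a small power of the oscillation parameter, I would look for
\[
\rho^1=\rho+\vartheta+\vartheta_c,\qquad \boldsymbol{u}^1=\boldsymbol{u}+\boldsymbol{w}+\boldsymbol{w}_c+\boldsymbol{w}_t,
\]
where $(\vartheta,\boldsymbol{w})$ is the principal perturbation, $(\vartheta_c,\boldsymbol{w}_c)$ are the spatial correctors restoring zero mean for $\vartheta$ and $\operatorname{div}\boldsymbol{u}^1=0$, and $\boldsymbol{w}_t$ is a temporal corrector whose job is to rewrite $\partial_t\vartheta$ as a pure spatial divergence modulo oscillation errors. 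A smooth time cutoff supported in $I_r$ (with $r$ chosen from the start) is multiplied against every building block, which automatically takes care of~\eqref{eq-11}.

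The principal part is assembled as follows. Invoke the standard geometric decomposition to write $-\boldsymbol{R}_\ell=\sum_{\xi\in\Lambda}a_\xi^2(t,x)\,\xi$ with $a_\xi$ of size $\|\boldsymbol{R}_\ell\|_{L^1_{t,x}}^{1/2}$ up to constants, where $\Lambda$ is a fixed finite set of rational unit vectors; then set
\[
\vartheta_\xi=a_\xi(t,x)\,g_\xi(\sigma t)\,\Theta_\xi\!\bigl(\lambda(x-\alpha\xi t),\mu\bigr),\qquad
\boldsymbol{w}_\xi=a_\xi(t,x)\,g_\xi(\sigma t)\,W_\xi\!\bigl(\lambda(x-\alpha\xi t),\mu\bigr)\,\xi,
\]
where $(\Theta_\xi,W_\xi)$ is the concentrated stationary Mikado pair of \cite{MS20}: supported in tubes of width $(\lambda\mu)^{-1}$ along the lines of direction $\xi$, satisfying $\xi\cdot\nabla\Theta_\xi=\operatorname{div}(W_\xi\xi)=0$, normalised so that $\Theta_\xi W_\xi$ has integral $1$ while $\|\Theta_\xi\|_{L^p}\|W_\xi\|_{L^{p'}}$ carries the spatial intermittency gain $\mu^{(d-1)(1-1/p-1/p')}$. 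The temporal profile $g_\xi(\sigma t)$ is a disjointly supported periodic bump with $\int g_\xi^2\simeq 1$ and $\|g_\xi\|_{L^r}\simeq\sigma^{\frac12-\frac1r}$, yielding the temporal intermittency gain; the small drift $\alpha\xi t$ kills the leading oscillation error in the transport term, as in \cite{MS20}. Setting $\vartheta=\sum\vartheta_\xi$, $\boldsymbol{w}=\sum\boldsymbol{w}_\xi$ and choosing $\boldsymbol{w}_t$ to absorb $\partial_t(\vartheta)$ in divergence form is routine.

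The new defect then splits into a linear error (containing $\rho_\ell\boldsymbol{w}$ and $\vartheta\boldsymbol{u}_\ell$), an oscillation error whose mean is eliminated by the key identity $\vartheta_\xi\boldsymbol{w}_\xi=a_\xi^2 g_\xi^2\,\Theta_\xi W_\xi\,\xi$ together with $-\boldsymbol{R}_\ell=\sum a_\xi^2\xi$, a corrector error, a temporal error, and the mollification error $\boldsymbol{R}_\ell-\boldsymbol{R}$. Each non-trivial piece is cast in the form $\mathcal R(f\cdot h_{\lambda,\mu,\sigma})$, where $h$ has zero spatial mean at spatial frequency $\gtrsim\lambda$ (or zero temporal mean at frequency $\gtrsim\sigma$), and a Bogovskii-type antidivergence supplies the crucial factor $\lambda^{-1}$ (or $\sigma^{-1}$). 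The four required estimates are then obtained by applying the improved H\"older inequality of \cite{CL21} together with the stationary-phase bounds for $\Theta_\xi,W_\xi,g_\xi$, and the dual bound \eqref{eq-10} follows from writing the perturbation as a divergence up to temporal correctors and paying an additional $\lambda^{-1}\|\phi\|_{C^N}$ when testing.

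The main obstacle is parameter balancing. Parametrising $\mu=\lambda^\eta$ and $\sigma=\lambda^\tau$ for small positive exponents $\eta,\tau$ to be chosen after $p,q,s,\tilde s$, the four estimates of the proposition translate into linear inequalities in $(\eta,\tau)$. Two inequalities push the pair upwards: $\|\boldsymbol{w}\|_{L^{\tilde s}_tW^{1,q}_x}\to 0$ requires $\eta$ and $\tau$ large to bury the spatial derivative cost $\lambda$, while keeping $\|\vartheta\|_{L^s_tL^p_x}$ and $\|\boldsymbol{w}\|_{L^{s'}_tL^{p'}_x}$ bounded ties a prescribed relation between $\eta$ and $\tau$. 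Two inequalities push downward: the oscillation error in $\boldsymbol{R}^1$ gains only $\lambda^{-1}$ after $\mathcal R$ and must beat the $L^1$-growth of $\Theta_\xi W_\xi$ and $g_\xi^2$, while the temporal error gains only $\sigma^{-1}$ and must beat the cost of $\partial_t a_\xi$ at scale $\ell^{-1}$. The existence of an admissible window is equivalent to the first inequality in \eqref{eq-assum}, with the critical value $\beta=(p-1)(d-1)/4p(p+1)$ being the threshold spatial intermittency exponent, and the second inequality in \eqref{eq-assum} quantifies exactly how much temporal intermittency remains available to make $\tilde s>1$ admissible. Once the window is non-empty, one chooses $\lambda$ so large (depending on $\delta$, $\nu$ and the given smooth triple) that every error drops below $\delta$, which closes the proof.
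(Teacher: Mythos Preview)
Your overall strategy---Mikado building blocks, temporal intermittency, parameter balancing under \eqref{eq-assum}---is on the right track, but the construction as you have written it will not produce the estimates of the proposition. The gap is in the \emph{asymmetry} of the perturbation.

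First, the proposition demands $\|\rho^1-\rho\|_{L^s_tL^p_x}\le\nu M\|\boldsymbol{R}\|^{1/p}_{L^1}$ and $\|\boldsymbol{u}^1-\boldsymbol{u}\|_{L^{s'}_tL^{p'}_x}\le\nu^{-1}M\|\boldsymbol{R}\|^{1/p'}_{L^1}$, with the \emph{free} parameter $\nu$. Your principal perturbation uses a single amplitude $a_\xi\sim\|\boldsymbol{R}_\ell\|^{1/2}$ multiplying both $\vartheta_\xi$ and $\boldsymbol{w}_\xi$; there is no place for $\nu$ to enter, and the exponents come out as $1/2$ rather than $1/p,1/p'$. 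The paper (following \cite{CL21}) instead uses two distinct coefficients
\[
a_j\sim\nu\,|R_j|^{1/p},\qquad b_j\sim\nu^{-1}|R_j|^{1/p'},\qquad a_jb_j=-\chi_j^2R_j,
\]
paired with Mikado building blocks normalised so that $\|\Phi_j^\mu\|_{L^p}\sim\|\boldsymbol{W}_j^\mu\|_{L^{p'}}\sim1$. Likewise, your single temporal profile $g_\xi$ with $\|g_\xi\|_{L^r}\sim\sigma^{1/2-1/r}$ is the symmetric ($s=2$) scaling; to hit $L^s_t$ for the density and $L^{s'}_t$ for the velocity one needs the asymmetric pair $\tilde g_\kappa=\kappa^{1/s}g_\kappa$, $\bar g_\kappa=\kappa^{1/s'}g_\kappa$ with $\int\tilde g_\kappa\bar g_\kappa=1$. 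Without these two asymmetries the first two displayed estimates of the proposition simply cannot be reached.

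Second, two structural choices differ from the paper and would cause trouble. You put the temporal corrector $\boldsymbol{w}_t$ in the velocity to absorb $\partial_t\vartheta$; in the paper the temporal corrector $\theta_o=\lambda^{-1}h_\kappa(\lambda t)\operatorname{div}\sum_j\chi_j^2R_j\boldsymbol{e}_j$ sits in the \emph{density} and is designed to cancel the low-temporal-frequency part of $\operatorname{div}(\theta_p\boldsymbol{w}_p+\boldsymbol{R})$, while $\partial_t(\theta_p+\theta_c)$ is handled separately via the potential $\boldsymbol{\Omega}_j^\mu$ of the stationary Mikado (this is where the C--Z inequality and $p>1$ are used, and where the second condition in \eqref{eq-assum} enters). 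You also invoke drifting space--time Mikado flows as in \cite{MS20}; the paper deliberately uses \emph{stationary} Mikado and notes (Remark~\ref{rem.3.4}) that the drift ``brings new difficulty when applying the temporal intermittency''---indeed, with drift $\partial_t$ hits $\lambda\alpha$ on the spatial phase in addition to $\sigma$ on $g_\xi$, and the two temporal mechanisms interfere. Finally, no mollification is needed here: since $(\rho,\boldsymbol{u},\boldsymbol{R})$ is already smooth, the paper works with it directly and builds the cutoffs $\chi_j$ from $|R_j|$ itself.
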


\noindent\textbf{Proof of Theorem~\ref{thm1.3}.} Assume $T=1$. We will construct a sequence $(\rho^n,\boldsymbol{u}^n,\boldsymbol{R}^n)$ of solutions to \eqref{eq-CDE}. For $n=1$, we set
\begin{equation*}(\rho^1,\boldsymbol{u}^1,\boldsymbol{R}^1):=(\tilde\rho,0,\mathcal{R}(\partial_t\tilde\rho)).\end{equation*}
Notice the constant mean assumption on $\tilde\rho$ implies zero mean of $\partial_t\tilde\rho$, hence $(\rho^1,\boldsymbol{u}^1,\boldsymbol{R}^1)$ solves \eqref{eq-CDE}.

Next we apply Proposition~\ref{prn3.1} inductively to obtain $(\rho^n,\boldsymbol{u}^n,\boldsymbol{R}^n)$ for $n=2,3\cdots$ as follows. Set $\nu_1:=\epsilon(2M\|\boldsymbol{R}^1\|^{1/p}_{L^1_{t,x}})^{-1}$ and choose sequence $\{(\delta_n,\nu_n)\}_{n=2}^{\infty}\subset(0,\infty)^2$ such that $\sum_{n}\delta_n^{1/2}=1$, $\delta_n^{1/p}\nu_n=\epsilon\delta^{1/2}_n/2M$. Observe that $\delta_n^{1/p'}/\nu_n=2M\delta^{1/2}_n/\epsilon$.

Given $(\rho^n,\boldsymbol{u}^n,\boldsymbol{R}^n)$, we apply Proposition~\ref{prn3.1} with parameters $\nu=\nu_n$ and $\delta=\delta_{n+1}$ to obtain a new triple $(\rho^{n+1},\boldsymbol{u}^{n+1},\boldsymbol{R}^{n+1})$ which verifies
\begin{align*}
\|\rho^{n+1}-\rho^n\|_{L^s_tL^p_x}&\le\nu_{n} M\|\boldsymbol{R}^n\|^{1/p}_{L^1_{t,x}},\\
\|\boldsymbol{u}^{n+1}-\boldsymbol{u}^n\|_{L^{s'}_tL^{p'}_x}&\le\nu_{n}^{-1} M\|\boldsymbol{R}^n\|^{1/p'}_{L^1_{t,x}},\\
\|\boldsymbol{u}^{n+1}-\boldsymbol{u}^n\|_{L^{\tilde s}_tW^{1,q}_x}&\le\delta_{n+1},\quad\|\boldsymbol{R}^{n+1}\|_{L^1_{t,x}}\le\delta_{n+1},\\
\Big\vert \int_{\mathbb{T}^d}(\rho^{n+1}-\rho^n)(t,x)\phi(x)\,dx\Big\vert &\le\delta_{n+1}\|\phi\|_{C^N}\quad\forall t\in[0,1],\,\forall\phi\in C^{\infty}(\mathbb{T}^d).
\end{align*}
When $n\ge 2$, we have
\begin{align*}
\|\rho^{n+1}-\rho^n\|_{L^s_tL^p_x}&\le\frac{\epsilon\delta^{1/2}_n}{2},\\
\|\boldsymbol{u}^{n+1}-\boldsymbol{u}^n\|_{L^{s'}_tL^{p'}_x}&\le\frac{2M^2\delta^{1/2}_n}{\epsilon}.
\end{align*}
Clearly there are functions $\rho\in L^s_tL^p_x$ and $\boldsymbol{u}\in L^{s'}_tL^{p'}_x\cap L^{\tilde s}_tW^{1,q}_x$ such that $\rho^{n}\to\rho$ in $L^s_tL^p_x$ and $\boldsymbol{u}^n\to\boldsymbol{u}$ in $L^{s'}_tL^{p'}_x\cap L^{\tilde s}_tW^{1,q}_x$. Moreover, we have $\rho^{n}\boldsymbol{u}^n\to\rho\boldsymbol{u}$ and $\boldsymbol{R}^n\to 0$ in $L^1_{t,x}$, and  $\int_{\mathbb{T}^d}\rho^n(\cdot,x)\phi(x)\,dx\to\int_{\mathbb{T}^d}\rho(\cdot,x)\phi(x)\,dx$ in $L^{\infty}_t$. Combine the fact $\operatorname{supp}_t(\rho^{n+1}-\rho^n)\in I_{r_n}$ for some $r_n>0$, we obtain the temporal continuity of $\rho$ in the distributional sense and for $t=0,1$, $\rho(t)=\tilde\rho(t)$, furthermore $(\rho,\boldsymbol{u})$ is a weak solution to \eqref{eq-TE} with initial data $\tilde\rho(0)$.

Finally, thanks to the choice of $\{\delta_n\},\{\nu_n\}$, we have
\begin{equation*}
\|\rho-\tilde\rho\|_{L^s_tL^p_x}\le\|\rho^{2}-\rho^1\|_{L^s_tL^p_x}+\sum_{n=2}^{\infty}\|\rho^{n+1}-\rho^n\|_{L^s_tL^p_x}\le\frac{\epsilon}{2}+\frac{\epsilon}{2}\sum_{n=2}^{\infty}\delta^{1/2}_n=\epsilon.
\end{equation*}\qed

%
%


\section{Technical tools}\label{sec2}

In this section, we collect the technical tools prepared in \cite{MS18,CL21} for the proof of the main proposition (Proposition~\ref{prn3.1}). We refer to \cite{MS18,MS20,CL21} for more details.

\subsection{Anti-divergence operators}

By the classical Fourier analysis, for any $f\in C^{\infty}(\mathbb{T}^d)$, a unique solution in $C_0^{\infty}(\mathbb{T}^d)$ of the Poisson equation
\begin{equation*}
\Delta u=f-\fint f
\end{equation*}
is given by $u(x)=\sum_{k\in\mathbb{Z}^d\setminus\{0\}}(4\pi\vert k\vert ^2)^{-1}e^{2\pi ik\cdot x}\hat{f}(k)$. Hence the standard anti-divergence operator $\mathcal{R}\colon C^{\infty}(\mathbb{T}^d)\to C^{\infty}_0(\mathbb{T}^d;\mathbb{R}^d)$ can be defined as
\begin{equation*}
\mathcal{R}f:=\Delta^{-1}\nabla f,
\end{equation*}
which satisfies
\begin{equation*}
\operatorname{div}(\mathcal{R}f)=f-\fint f.
\end{equation*}
Obviously, for every $\sigma\in\mathbb{N}_+$ and $f\in C_0^{\infty}(\mathbb{T}^d)$ there holds
\begin{equation*}
\mathcal{R}[f(\sigma\cdot)](x)=\sigma^{-1}(\mathcal{R}f)(\sigma x).
\end{equation*}

Further, the first order bilinear anti-divergence operator $\mathcal{B}\colon C^{\infty}(\mathbb{T}^d)\times C^{\infty}(\mathbb{T}^d)\to C^{\infty}(\mathbb{T}^d;\mathbb{R}^d)$ can be defined as
\begin{equation*}
\mathcal{B}(a,f):=a\mathcal{R}f-\mathcal{R}(\nabla a\cdot\mathcal{R}f),
\end{equation*}
which satisfies
\begin{equation*}
\operatorname{div}(\mathcal{B}(a,f))=af-\fint af\text{ provided that }f\in C^{\infty}_0(\mathbb{T}^d).
\end{equation*}
The bilinear anti-divergence operator $\mathcal{B}$ has the additional advantage of gaining derivative from $f$ when $f$ has zero mean and a very small period. See also higher order variants in \cite{MS20}.

\begin{remark}
Notice the definitions of $\mathcal{R},\mathcal{B}$ in \cite{MS20} are slightly different from the definitions in this paper, which actually are defined as
\begin{equation*}
\mathcal{R}f=\nabla\Delta^{-1} f,\quad \mathcal{B}(a,f)=a\mathcal{R}f-\mathcal{R}(\nabla a\cdot\mathcal{R}f-\fint af).
\end{equation*}
In this case, $f$ must be mean zero.
\end{remark}

\begin{lemma}[{\cite[Lemma 2.1]{CL21}}]\label{lem2.1}
Let $d\ge2$. For every $m\in\mathbb{N}$ and $r\in[1,\infty]$, the anti-divergence operator $\mathcal{R}$ is bounded on $W^{m,r}(\mathbb{T}^d)$:
\begin{equation}
\|\mathcal{R}f\|_{W^{m,r}}\lesssim\|f\|_{W^{m,r}}.
\end{equation}
Moreover for all $f\in C^{\infty}(\mathbb{T}^d)$ and $1<r<\infty$, the Calder\'on-Zygmund inequality holds:
\begin{equation}\label{eq-5}
\|\mathcal{R}(\operatorname{div} f)\|_{L^r}\lesssim\|f\|_{L^r}.
\end{equation}
\end{lemma}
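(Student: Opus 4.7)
The plan is to obtain both bounds from standard Fourier-multiplier calculus on the torus, without invoking anything deeper than Young's convolution inequality and the classical boundedness of Riesz transforms. First I would establish the $L^r$ bound $\|\mathcal{R}g\|_{L^r}\lesssim\|g\|_{L^r}$ for all $r\in[1,\infty]$. Writing $\mathcal{R}g = K\ast g$ with $K$ the vector-valued convolution kernel of $\Delta^{-1}\nabla$, one identifies $K$ on $\mathbb{T}^d$ with the periodization (plus a smooth zero-mean correction) of the gradient of the Newtonian potential $G$ on $\mathbb{R}^d$. Near the origin $K(x)$ behaves like $c\,x/|x|^d$, which is locally integrable in every dimension $d\ge 2$, hence $K\in L^1(\mathbb{T}^d;\mathbb{R}^d)$. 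Young's inequality then gives the bound uniformly in $r\in[1,\infty]$.

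Next, for the $W^{m,r}$ estimate I would use that $\mathcal{R}$ commutes with $\partial^{\boldsymbol{k}}$ on $C^\infty(\mathbb{T}^d)$: both are Fourier multipliers, and their symbols commute trivially, so $\partial^{\boldsymbol{k}}\mathcal{R}f = \mathcal{R}\partial^{\boldsymbol{k}}f$ for every multi-index $\boldsymbol{k}$. Applying the $L^r$ bound from the previous step to each $\partial^{\boldsymbol{k}}f$ and summing over $|\boldsymbol{k}|\le m$ yields the Sobolev estimate on smooth functions. The extension to $W^{m,r}(\mathbb{T}^d)$ is routine density for $r<\infty$; for $r=\infty$ one either argues by duality or works directly on the smooth subspace, which is all that is needed in the applications.

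For the Calder\'on--Zygmund inequality \eqref{eq-5}, I would observe that the $j$-th component of $\mathcal{R}(\operatorname{div} f)$ equals $\sum_i\partial_i\partial_j\Delta^{-1}f_i$, whose Fourier symbol is $k_i k_j/|k|^2$ for $k\ne 0$ (and $0$ at $k=0$). These are the symbols of compositions of Riesz transforms on the torus, and the classical Calder\'on--Zygmund theorem says they are bounded on $L^r(\mathbb{T}^d)$ for every $1<r<\infty$. Summing the componentwise bound finishes the proof.

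The only conceptual point worth highlighting, and the reason for the different ranges of $r$, is the dichotomy between the two estimates: the first tolerates $r=1,\infty$ because the kernel $K$ of $\Delta^{-1}\nabla$ is already in $L^1(\mathbb{T}^d)$, so a soft Young argument suffices; the second genuinely feels the singular-integral nature of $\partial_i\partial_j\Delta^{-1}$, whose kernel fails to be in $L^1$, and hence strictly requires $1<r<\infty$ via Calder\'on--Zygmund theory. No further obstacle is expected; every step is a direct appeal to well-known harmonic analysis on $\mathbb{T}^d$.
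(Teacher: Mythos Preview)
The paper does not prove this lemma at all: it is quoted verbatim from \cite[Lemma 2.1]{CL21} and used as a black box. Your argument is a correct and standard way to establish both estimates---the key observations that the kernel of $\Delta^{-1}\nabla$ is integrable on $\mathbb{T}^d$ (giving the endpoint-inclusive $L^r$ bound via Young) and that $\mathcal{R}\operatorname{div}$ is a matrix of second-order Riesz transforms (giving the $1<r<\infty$ bound via Calder\'on--Zygmund) are exactly the right ones, so there is nothing to compare against and nothing to fix.
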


\begin{lemma}[{\cite[Lemma 2.2]{CL21}}]\label{lem2.2}
Let $d\ge2$ and $r\in[1,\infty]$. Then for any $a,f\in C^{\infty}(\mathbb{T}^d)$:
\begin{equation*}
\|\mathcal{B}(a,f)\|_{L^r}\lesssim\|a\|_{C^1}\|\mathcal{R}f\|_{L^r}.
\end{equation*}
\end{lemma}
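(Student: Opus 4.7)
The plan is to apply the triangle inequality directly to the defining decomposition $\mathcal{B}(a,f) = a\mathcal{R}f - \mathcal{R}(\nabla a\cdot\mathcal{R}f)$ and estimate the two summands independently. In each case, a single H\"older step combined with the $L^r$-boundedness of the anti-divergence operator $\mathcal{R}$ (supplied by Lemma~\ref{lem2.1}) should suffice.

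For the first summand, H\"older's inequality immediately gives
\begin{equation*}
\|a\mathcal{R}f\|_{L^r}\le \|a\|_{L^\infty}\|\mathcal{R}f\|_{L^r}\le \|a\|_{C^1}\|\mathcal{R}f\|_{L^r}.
\end{equation*}
For the second summand, I would first invoke Lemma~\ref{lem2.1} with $m=0$ to move $\mathcal{R}$ out of the way, obtaining $\|\mathcal{R}(\nabla a\cdot\mathcal{R}f)\|_{L^r}\lesssim \|\nabla a\cdot\mathcal{R}f\|_{L^r}$, and then apply H\"older once more to conclude
\begin{equation*}
\|\nabla a\cdot\mathcal{R}f\|_{L^r}\le \|\nabla a\|_{L^\infty}\|\mathcal{R}f\|_{L^r}\le \|a\|_{C^1}\|\mathcal{R}f\|_{L^r}.
\end{equation*}
Summing the two estimates delivers the claimed bound.

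I do not anticipate any real obstacle; this is essentially a one-line computation once the decomposition is written out. The only nuance worth flagging is that the endpoint cases $r\in\{1,\infty\}$ stated in Lemma~\ref{lem2.2} rely on the $W^{m,r}$-boundedness half of Lemma~\ref{lem2.1}, not on the Calder\'on--Zygmund inequality \eqref{eq-5}, which is restricted to $1<r<\infty$. Note also that $\nabla a\cdot\mathcal{R}f$ need not have zero spatial mean, but this does not affect the argument since Lemma~\ref{lem2.1} is stated on all of $W^{m,r}(\mathbb{T}^d)$.
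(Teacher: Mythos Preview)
Your proposal is correct and is exactly the standard argument; the paper does not supply its own proof but simply cites \cite[Lemma 2.2]{CL21}, where the same triangle-inequality-plus-Lemma~\ref{lem2.1} computation is carried out. Your remarks on the endpoint cases and on the possibly nonzero mean of $\nabla a\cdot\mathcal{R}f$ are accurate and address the only points that could conceivably cause hesitation.
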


\begin{lemma}[{\cite[Lemma 2.4]{CL21}}]\label{lem2.3}
Let $\sigma\in\mathbb{N}$ and $a,f\in C^{\infty}(\mathbb{T}^d)$. Then for all $r\in[1,\infty]$,
\begin{equation*}
\Big\vert \|a(\cdot)f(\sigma\cdot)\|_{L^r}-\|a\|_{L^r}\|f\|_{L^r}\Big\vert \le C_{r} \sigma^{-1/r}\|a\|_{C^1}\|f\|_{L^r}.
\end{equation*}
\end{lemma}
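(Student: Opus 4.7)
The plan is to reduce the estimate to an almost-product structure by replacing $a$ with a piecewise-constant approximation adapted to the period $1/\sigma$ of the fast factor $f(\sigma\cdot)$. I partition $\mathbb{T}^d$ into $\sigma^d$ cubes $\{Q_j\}$ of side $1/\sigma$ (this is possible because $\sigma\in\mathbb{N}$), pick reference points $x_j\in Q_j$, and set $\tilde a(x):=a(x_j)$ on $Q_j$. The Lipschitz bound $|a(x)-a(x_j)|\le \sqrt{d}\,\sigma^{-1}\|a\|_{C^1}$ together with the triangle inequality for $L^r$ norms gives
\begin{equation*}
\bigl|\|af(\sigma\cdot)\|_{L^r}-\|\tilde af(\sigma\cdot)\|_{L^r}\bigr|\le \|(a-\tilde a)f(\sigma\cdot)\|_{L^r}\le \sqrt{d}\,\sigma^{-1}\|a\|_{C^1}\|f\|_{L^r},
\end{equation*}
which is already at least as sharp as the $\sigma^{-1/r}$ rate we target.

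Next I compute $\|\tilde af(\sigma\cdot)\|_{L^r}^r$ exactly. Since $\tilde a$ is constant on each $Q_j$ and $f(\sigma\cdot)$ has period $1/\sigma$, a change of variables combined with the periodicity of $f$ yields
\begin{equation*}
\|\tilde af(\sigma\cdot)\|_{L^r}^r=\sum_j|a(x_j)|^r\int_{Q_j}|f(\sigma x)|^r\,dx=\sigma^{-d}\|f\|_{L^r}^r\sum_j|a(x_j)|^r.
\end{equation*}
The Riemann-sum error is controlled by applying the mean value theorem to $t\mapsto|t|^r$:
\begin{equation*}
\Bigl|\sigma^{-d}\sum_j|a(x_j)|^r-\|a\|_{L^r}^r\Bigr|\le C_{d,r}\,\|a\|_\infty^{r-1}\|a\|_{C^1}\sigma^{-1}.
\end{equation*}
Extracting the $r$-th root by the inequality $|X-Y|\le |X^r-Y^r|^{1/r}$ (a direct consequence of the subadditivity $(A+B)^{1/r}\le A^{1/r}+B^{1/r}$ valid for $r\ge1$ and nonnegative $A,B$) introduces the desired fractional power $\sigma^{-1/r}$ and gives
\begin{equation*}
\bigl|\|\tilde af(\sigma\cdot)\|_{L^r}-\|a\|_{L^r}\|f\|_{L^r}\bigr|\le C_r\,\|a\|_\infty^{(r-1)/r}\|a\|_{C^1}^{1/r}\,\sigma^{-1/r}\|f\|_{L^r}\le C_r\,\|a\|_{C^1}\sigma^{-1/r}\|f\|_{L^r},
\end{equation*}
where the last step absorbs $\|a\|_\infty\le\|a\|_{C^1}$. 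A final triangle inequality combines this with the $a-\tilde a$ error above. The case $r=\infty$ is immediate since both sides of the claimed inequality are trivially bounded by $\|a\|_\infty\|f\|_\infty$.

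The delicate point is the root-extraction step: a naive mean-value estimate of the form $|X-Y|\lesssim|X^r-Y^r|/\min(X,Y)^{r-1}$ would degenerate when either quantity is small, so I instead rely on subadditivity of $t\mapsto t^{1/r}$ to convert the $\sigma^{-1}$ bound on the $r$-th power into the desired $\sigma^{-1/r}$ bound on the norms themselves, and then absorb the factor $\|a\|_\infty^{(r-1)/r}\|a\|_{C^1}^{1/r}$ into $\|a\|_{C^1}$ to match the stated form of the constant.
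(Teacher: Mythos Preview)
The paper does not supply its own proof of this lemma; it merely cites \cite[Lemma~2.4]{CL21} and notes in the subsequent remark that the result goes back to \cite[Lemma~2.1]{MS18}. Your argument is correct and is in fact the standard one from those references: partition $\mathbb{T}^d$ into the $\sigma^d$ fundamental cubes of the fast oscillation, freeze $a$ on each cube, use periodicity of $f(\sigma\cdot)$ to factor the $L^r$ norm exactly for the frozen approximant, and control the two errors (replacing $a$ by $\tilde a$, and the Riemann-sum discrepancy) via the $C^1$ bound on $a$. Your use of the subadditivity of $t\mapsto t^{1/r}$ to pass from the $\sigma^{-1}$ bound on $|X^r-Y^r|$ to the $\sigma^{-1/r}$ bound on $|X-Y|$ is the right way to avoid degeneracy when $\|a\|_{L^r}$ is small. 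One cosmetic point: for $r=1$ the phrase ``mean value theorem applied to $t\mapsto|t|^r$'' should be read as the reverse triangle inequality, since $|t|$ is not differentiable at the origin; the estimate is of course still valid there.
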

\begin{remark}
Lemma~\ref{lem2.3} is called the improved H\"older inequality, which is established in \cite[Lemma 2.1]{MS18}, inspired by \cite[Lemma 3.7]{BV19b}.
\end{remark}

\begin{lemma}[{\cite[Lemma 2.5]{CL21}}]\label{lem2.4}
Let $\sigma\in\mathbb{N}$, $a\in C^{\infty}(\mathbb{T}^d)$ and $f\in C_0^{\infty}(\mathbb{T}^d)$. Then for all even $n\ge0$
\begin{equation*}
\Big\vert  \fint_{\mathbb{T}^d}a(x)f(\sigma x)\,dx\Big\vert  \lesssim_n \sigma^{-n}\|a\|_{C^n}\|f\|_{L^2}.
\end{equation*}
\end{lemma}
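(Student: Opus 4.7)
The estimate expresses that highly oscillatory zero-mean functions test small against smooth weights, and the natural route is integration by parts using the Laplacian. Since $n$ is even, write $n=2m$.

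First, I would invert $\Delta^m$ on $f$. Because $f\in C_0^\infty(\mathbb{T}^d)$ has zero mean, its Fourier series $\hat f(k)$ is supported on $k\neq 0$, so setting
\begin{equation*}
g(x):=\sum_{k\in\mathbb{Z}^d\setminus\{0\}}\frac{\hat f(k)}{(-4\pi^2|k|^2)^m}e^{2\pi i k\cdot x}
\end{equation*}
produces a mean-zero smooth function with $\Delta^m g=f$. By Parseval and the fact that $|k|\ge 1$ on the support, $\|g\|_{L^2}\le(4\pi^2)^{-m}\|f\|_{L^2}\lesssim_n\|f\|_{L^2}$.

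Next I would exploit scaling. Since $\sigma\in\mathbb{N}$, the function $g(\sigma\cdot)$ is still $\mathbb{T}^d$-periodic, and a direct chain-rule computation gives
\begin{equation*}
\Delta^m\bigl[g(\sigma\,\cdot)\bigr](x)=\sigma^{2m}(\Delta^m g)(\sigma x)=\sigma^n f(\sigma x).
\end{equation*}
Substituting and integrating by parts $2m=n$ times (no boundary terms on the torus),
\begin{equation*}
\fint_{\mathbb{T}^d}a(x)f(\sigma x)\,dx=\sigma^{-n}\fint_{\mathbb{T}^d}a(x)\,\Delta^m\bigl[g(\sigma\cdot)\bigr](x)\,dx=\sigma^{-n}\fint_{\mathbb{T}^d}(\Delta^m a)(x)\,g(\sigma x)\,dx.
\end{equation*}

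Finally I would apply Cauchy--Schwarz and the identity $\|g(\sigma\cdot)\|_{L^2(\mathbb{T}^d)}=\|g\|_{L^2(\mathbb{T}^d)}$ (by $\sigma$-periodicity and the fact that $|\mathbb{T}^d|=1$) to obtain
\begin{equation*}
\Bigl|\fint a(x)f(\sigma x)\,dx\Bigr|\le\sigma^{-n}\|\Delta^m a\|_{L^2}\|g\|_{L^2}\lesssim_n\sigma^{-n}\|a\|_{C^n}\|f\|_{L^2},
\end{equation*}
using $\|\Delta^m a\|_{L^2}\le\|\Delta^m a\|_{L^\infty}\lesssim_n\|a\|_{C^n}$.

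There is no real obstacle here; the only point that requires care is the zero-mean hypothesis on $f$, which is precisely what allows $\Delta^{-m}f$ to be defined on the torus and is essential for the integration-by-parts gain of $\sigma^{-n}$. The requirement that $n$ be even is a mere convenience to use $\Delta^m$ rather than a fractional operator; the odd case would follow by interpolation or by a single extra gradient, but is not needed in the sequel.
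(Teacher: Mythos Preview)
Your argument is correct and is the standard one: invert $\Delta^m$ on the zero-mean $f$, rescale, integrate by parts, and finish with Cauchy--Schwarz. The paper does not supply its own proof of this lemma; it simply quotes it from \cite[Lemma~2.5]{CL21}, where the same Fourier/integration-by-parts reasoning underlies the statement.
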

\begin{remark}
Lemma~\ref{lem2.4} is used to estimate the space mean of the perturbations and obtain that the density perturbations converges to 0 in the distributional sense in space, uniformly in time, see \eqref{eq-10}.
\end{remark}

\subsection{Mikado flows}

Now we define the Mikado flow $(\Phi_j^{\mu},\boldsymbol{W}_j^{\mu})$ given in \cite{CL21}, which are a family of periodic stationary solutions to the transport equation \eqref{eq-TE}. Where $\Phi_j^{\mu}$ is the Mikado density, $\boldsymbol{W}_j^{\mu}$ is the Mikado filed.

Let $d\ge 3$. Fix $\boldsymbol{\Omega}\in C_c^{\infty}(\mathbb{R}^{d-1};\mathbb{R}^{d-1})$ satisfying
\begin{equation*}
\operatorname{supp}\boldsymbol{\Omega}\subset(0,1)^{d-1},\quad\int_{\mathbb{R}^{d-1}}(\operatorname{div}\boldsymbol{\Omega})^2\,dx=1.
\end{equation*}
Denote $\phi=\operatorname{div}\boldsymbol{\Omega}$ and $\boldsymbol{\Omega}^{\mu}(x)=\boldsymbol{\Omega}(\mu x)$, $\phi^{\mu}(x)=\phi(\mu x)$. We can define a family of non-periodic stationary solutions to \eqref{eq-TE} as
\begin{align*}
\tilde{\Phi}_j^{\mu}(x)&=\mu^{\frac{d-1}{p}}\phi^{\mu}(x_1,\dots,x_{j-1},x_{j+1},\dots,x_d),\\
\tilde{\boldsymbol{W}}_j^{\mu}(x)&=\mu^{\frac{d-1}{p'}}\phi^{\mu}(x_1,\dots,x_{j-1},x_{j+1},\dots,x_d)\boldsymbol{e}_j.
\end{align*}
The potential is defined as
\begin{equation*}
\tilde{\boldsymbol{\Omega}}_j^{\mu}(x)=\mu^{-1+\frac{d-1}{p}}\boldsymbol{\Omega}^{\mu}(x_1,\dots,x_{j-1},x_{j+1},\dots,x_d).
\end{equation*}


The periodic solutions $(\Phi_j^{\mu},\boldsymbol{W}_j^{\mu})$ with mutually disjoint supports can be constructed by translation and periodization of the non-periodic flow $(\tilde{\Phi}_j^{\mu},\tilde{\boldsymbol{W}}_j^{\mu})$. This is based on the geometrical fact that along any two directions in $\mathbb{R}^d$ for $d\ge 3$, there exist two disjoint lines.

For instance, notice $\{\operatorname{supp}\tilde{\Phi}_j^{\mu}\}_{j=1}^d$ are $d$ cylinders with side length $1/\mu$ lying at $x_j$-axis respectively. When $\mu\ge 8d$, one can move the $j$-th cylinder with length $\frac{1}{4d}$ along a direction $\boldsymbol{p}_j\in\{\boldsymbol{e}_j\}_{j=1}^d$, such that all cylinders are mutually disjoint and keep lying in $\cup_{j=1}^d[(0,1)^d+\mathbb{R}\boldsymbol{e}_j]$. By means of the Possion summation, we define periodic Mikado flow as
\begin{align*}
\Phi_j^{\mu}(x)&=\sum_{\boldsymbol{n}\in\mathbb{Z}^d,\boldsymbol{n}_j=0}\tilde{\Phi}_j^{\mu}\Big(x-\frac{1}{4d}\boldsymbol{p}_j+\boldsymbol{n}\Big),\\
\boldsymbol{W}_j^{\mu}(x)&=\sum_{\boldsymbol{n}\in\mathbb{Z}^d,\boldsymbol{n}_j=0}\tilde{\boldsymbol{W}}_j^{\mu}\Big(x-\frac{1}{4d}\boldsymbol{p}_j+\boldsymbol{n}\Big),
\end{align*}
and the periodic potential is defined as
\begin{equation*}
\boldsymbol{\Omega}_j^{\mu}(x)=\sum_{\boldsymbol{n}\in\mathbb{Z}^d,\boldsymbol{n}_j=0}\tilde{\boldsymbol{\Omega}}_j^{\mu}\Big(x-\frac{1}{4d}\boldsymbol{p}_j+\boldsymbol{n}\Big).
\end{equation*}

For the Mikado flow, we have the following proposition.

\begin{proposition}[{\cite[Proposition 4.3, Theorem 4.4]{CL21}}]\label{prn2.5}
Let $d\ge3$ and $\mu\ge 8d$. Then the periodic functions $\Phi_j^{\mu},\boldsymbol{W}_j^{\mu}\in C^{\infty}_0(\mathbb{T}^d)$, $\boldsymbol{\Omega}_j^{\mu}\in C^{\infty}(\mathbb{T}^d)$ verify the following.
\begin{enumerate}[(i).]
\item For any $1\le r\le\infty$, $m\in\mathbb{N}$,
\begin{equation}\label{eq-6}
\begin{split}
\|\nabla^m\Phi_j^{\mu}\|_{L^r}&\lesssim_m\mu^{m+\frac{d-1}{p}-\frac{d-1}{r}},\\
\|\nabla^m\boldsymbol{\Omega}_j^{\mu}\|_{L^r}&\lesssim_m\mu^{m-1+\frac{d-1}{p}-\frac{d-1}{r}},\\
\|\nabla^m\boldsymbol{W}_j^{\mu}\|_{L^r}&\lesssim_m\mu^{m+\frac{d-1}{p^{\prime}}-\frac{d-1}{r}}.
\end{split}
\end{equation}
\item $\Phi_j^{\mu},\boldsymbol{W}_j^{\mu},\boldsymbol{\Omega}_j^{\mu}$ solve
\begin{equation}\label{eq-7}
\left\{
\begin{split}
&\operatorname{div}(\Phi_j^{\mu}\boldsymbol{W}_j^{\mu})=0,\\
&\operatorname{div} \boldsymbol{W}_j^{\mu}=0,\quad\operatorname{div}\boldsymbol{\Omega}_j^{\mu}=\Phi_j^{\mu}.
\end{split}\right.
\end{equation}
\item There hold
\begin{equation}\label{eq-8}
\int_{\mathbb{T}^d}\Phi_j^{\mu}\boldsymbol{W}_j^{\mu}=\boldsymbol{e}_j\text{ for all } 1\le j\le d;\quad\Phi_j^{\mu}\boldsymbol{W}_{k}^{\mu}=0\text{ if }j\ne k,
\end{equation}
\end{enumerate}
where $\boldsymbol{e}_j$ is the $j$-th standard Euclidean basis.
\end{proposition}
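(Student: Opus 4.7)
The plan is to deduce (i)--(iii) from the scaling structure of the base profile $\boldsymbol{\Omega}$ together with a geometric disjointness argument for the periodized supports. The three claims decouple naturally: (i) is a scaling computation, (ii) is a direct differentiation, and (iii) combines a normalization integral with the disjoint-supports construction, which is the heart of the matter.

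For (i), I would first bound the non-periodic objects. Each of $\tilde{\Phi}_j^{\mu}$, $\tilde{\boldsymbol{W}}_j^{\mu}$, $\tilde{\boldsymbol{\Omega}}_j^{\mu}$ has the form (prefactor)$\,\times\,$($(d{-}1)$-variable profile evaluated at $\mu$ times the transverse coordinates), supported in a tube of transverse width $1/\mu$. Changing variables $y=\mu(x_1,\dots,\widehat{x_j},\dots,x_d)$ on the base slab produces a transverse factor $\mu^{-(d-1)/r}\|\phi\|_{L^r(\mathbb{R}^{d-1})}$; combined with the prefactors $\mu^{(d-1)/p}$, $\mu^{(d-1)/p'}$, $\mu^{-1+(d-1)/p}$ this yields the stated exponents, and each spatial derivative contributes a chain-rule factor $\mu$. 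Periodization preserves the estimates up to constants once $\mu\ge 8d$, because on the fundamental domain the translates in the Poisson sum are disjoint (so no overlap losses occur when taking $L^r$).

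For (ii), since $\tilde{\boldsymbol{W}}_j^{\mu}$ points along $\boldsymbol{e}_j$ and its scalar coefficient depends only on the variables $\widehat{x_j}$, $\operatorname{div}\tilde{\boldsymbol{W}}_j^{\mu}=\partial_{x_j}(\cdot)=0$; the same reasoning gives $\operatorname{div}(\tilde{\Phi}_j^{\mu}\tilde{\boldsymbol{W}}_j^{\mu})=0$. The identity $\operatorname{div}\tilde{\boldsymbol{\Omega}}_j^{\mu}=\tilde{\Phi}_j^{\mu}$ follows from $\operatorname{div}\boldsymbol{\Omega}=\phi$ via the chain rule, the extra $\mu^{-1}$ built into $\tilde{\boldsymbol{\Omega}}_j^{\mu}$ compensating the $\mu$ that differentiation produces. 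All three identities pass through the periodization because it is a linear sum of translates.

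For (iii), the $j$-th component of $\Phi_j^{\mu}\boldsymbol{W}_j^{\mu}$ restricted to one fundamental tube is $\mu^{d-1}\phi^{2}(\mu\,\widehat{x_j})$, and Fubini together with the normalization $\int_{\mathbb{R}^{d-1}}(\operatorname{div}\boldsymbol{\Omega})^{2}=1$ gives $\int_{\mathbb{T}^d}\Phi_j^{\mu}\boldsymbol{W}_j^{\mu}=\boldsymbol{e}_j$. I expect the main obstacle to be the orthogonality $\Phi_j^{\mu}\boldsymbol{W}_k^{\mu}\equiv 0$ for $j\ne k$: one must verify that after the translations $\tfrac{1}{4d}\boldsymbol{p}_j$ (with $\boldsymbol{p}_j\in\{\boldsymbol{e}_i\}_{i=1}^d$, $\boldsymbol{p}_j\ne\boldsymbol{e}_j$) the periodized tubes along different axes $\boldsymbol{e}_j,\boldsymbol{e}_k$ have empty intersection in $\mathbb{T}^d$. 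This is the $d\ge 3$ "two disjoint skew lines" argument flagged in the paper: projecting both tubes onto the $(\boldsymbol{e}_j,\boldsymbol{e}_k)$-plane reduces the claim to showing that two perpendicular strips of width $1/\mu\le 1/(8d)$, each shifted by $1/(4d)$ in an ambient direction not in $\operatorname{span}(\boldsymbol{e}_j,\boldsymbol{e}_k)$, lift back to disjoint tubes; with $d\ge 3$ there is always a coordinate direction available to absorb the offset, and the combinatorial check on all $\binom{d}{2}$ pairs $(j,k)$ can be done by fixing a cyclic choice such as $\boldsymbol{p}_j=\boldsymbol{e}_{j+1 \bmod d}$.
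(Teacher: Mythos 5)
The paper states this proposition as a citation of \cite{CL21} and only sketches the construction, so the task is to fill in the verification; your proposal does this along exactly the lines the paper outlines, and parts (i) and (ii) are correct as written (the tube has transverse support $(0,1/\mu)^{d-1}$, so the change of variables on the $(d-1)$ transverse coordinates gives the $\mu^{-(d-1)/r}$ factor, each derivative contributes $\mu$, the built-in $\mu^{-1}$ in $\tilde{\boldsymbol{\Omega}}_j^\mu$ cancels the derivative from $\operatorname{div}\boldsymbol{\Omega}=\phi$, and periodization is harmless since translates by $\mathbb{Z}^d$ of the fundamental tube are disjoint). The Fubini/normalization step in (iii) is also right. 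The one place to sharpen is the $j\ne k$ disjointness. Projecting both tubes onto the $(\boldsymbol{e}_j,\boldsymbol{e}_k)$-plane does \emph{not} reduce the problem: two transversal strips in that plane always intersect, and the projection discards precisely the coordinates carrying the separation. Also, with a fixed cyclic choice $\boldsymbol{p}_j=\boldsymbol{e}_{j+1\bmod d}$ it is \emph{not} the case that both shifts lie outside $\operatorname{span}(\boldsymbol{e}_j,\boldsymbol{e}_k)$ for every pair: when $k=j+1$, the shift of tube $j$ is $\boldsymbol{e}_k$, which is in the span. The correct statement is that for each unordered pair $\{j,k\}$ at least one of the two shift directions is a coordinate index $i\notin\{j,k\}$, and one checks disjointness by comparing the single $i$-th coordinate ranges of the two tubes. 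Concretely, if $k\ne j+1$ take $i=j+1$ (so $i\notin\{j,k\}$): tube $j$ occupies $x_i\in\tfrac{1}{4d}+(0,1/\mu)\bmod 1$ while tube $k$ occupies $x_i\in(0,1/\mu)\bmod 1$, and these are disjoint since $1/\mu\le 1/(8d)<1/(4d)$. If $k=j+1$, then $k+1\ne j$ precisely because $d\ge 3$, so take $i=k+1$ and run the same one-coordinate comparison. This is where $d\ge 3$ genuinely enters, and the cyclic choice makes the check uniform over all $\binom{d}{2}$ pairs. Aside from this clarification of the geometric step, your proposal is sound and matches the paper's intended construction.
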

\begin{remark}\label{rem.3.4}
The concept of Mikado flow was introduced in \cite{DS17} firstly and then adapted in \cite{MS18,MS19,MS20,BCD21,CL21} for the non-uniqueness results of the transport equation. Notice the construction of stationary Mikado flow requires the dimension $d$ is not less than three. An alternative is the space-time Mikado flow introduced in \cite{MS20} (see also \cite{BCD21}), which can be constructed in $\mathbb{T}^2$. However, the space-time flow brings new difficulty when applying the temporal intermittency.
\end{remark}

\subsection{Intermittent functions in time}

In this subsection, we define the intermittent oscillatory functions $\bar{g}_{\kappa}$ and $\tilde{g}_{\kappa}$. Take $g\in C_c^{\infty}(\mathbb{R})$ satisfying $\operatorname{supp} g\subset(0,1)$ and
\begin{equation*}
\int_{[0,1]}g^2\,dt=1.
\end{equation*}
For $\kappa\ge1$, define
\begin{equation*}
g_{\kappa}(t):=\sum_{n\in\mathbb{Z}}g(\kappa t+\kappa n),\quad \bar{g}_{\kappa}(t):=\kappa^{1/s^{\prime}}g_{\kappa}(t),\quad\tilde{g}_{\kappa}(t):=\kappa^{1/s}g_{\kappa}(t).
\end{equation*}
In this case, we have that $\bar{g}_{\kappa},\tilde{g}_{\kappa}\in C^{\infty}([0,1])$ be 1-periodic functions and the following facts (similar to Proposition~\ref{prn2.5}) hold
\begin{equation}\label{eq-12}
\int_{[0,1]}\bar{g}_{\kappa}\tilde{g}_{\kappa}\,dt=1,\quad\|\partial_t^m\bar{g}_{\kappa}\|_{L^r_t}\lesssim \kappa^{m+\frac{1}{s^{\prime}}-\frac{1}{r}},\quad\|\partial_t^m\tilde{g}_{\kappa}\|_{L^r_t}\lesssim \kappa^{m+\frac{1}{s}-\frac{1}{r}}.
\end{equation}

Define $h_{\kappa}(t):=\int_0^t(\bar{g}_{\kappa}\tilde{g}_{\kappa}-1)\,d\tau$, which obviously satisfies
\begin{equation}\label{eq-13}
\partial_th_{\kappa}=\bar{g}_{\kappa}\tilde{g}_{\kappa}-1,\quad\|h_{\kappa}\|_{L^{\infty}_t}\le 1.
\end{equation}

We write $\boldsymbol{R}=\sum_j R_j\boldsymbol{e}_j$, where $\boldsymbol{e}_j$ is the $j$-th standard Euclidean basis.

Recall the notation $I_r=[r,1-r]\subset(0,1)$ for $0<r<1$. Define the smooth cutoff functions $\chi_{j}\in C_c^{\infty}(\mathbb{R}\times\mathbb{T}^d)$ satisfying
\begin{equation}\label{eq-14}
0\le\chi_{j}\le1,\quad\chi_{j}(t,x)=\left\{
\begin{split}
0,&\text{ if }\vert R_j\vert \le \frac{\delta}{8d}\text{ or }t\notin I_{r/2},\\
1,&\text{ if }\vert R_j\vert \ge \frac{\delta}{4d}\text{ and }t\in I_{r}.
\end{split}\right.
\end{equation}
Where $r>0$ is fixed sufficiently small enough such that 
\begin{equation}\label{eq-15}
\|\boldsymbol{R}\|_{L^{\infty}_{t,x}}\le\frac{\delta}{8rd}.
\end{equation}
Notice $\operatorname{supp}\chi_{j}\subset I_{r/2}\subset(0,1)$. By a slight abuse of notation, $\chi_{j}$ denote the 1-periodic extension in time of $\chi_{j}$. Define $\widetilde{R}_j:=\chi_{j}R_j$.

\section{Proof of Proposition~\ref{prn3.1}}

In this section, we follow the lines in \cite{CL21} to construct the perturbations and defect field, and then finish the proof of Proposition~\ref{prn3.1}. In particular, we set that the concentration in time is stronger than in space, the oscillation in time is weaker than in space, see Sec.~\ref{sec.4.3}.

\subsection{Constructing perturbations}

We first define the principle part of the perturbations by the Mikado flows given in Proposition~\ref{prn2.5}. Let
\begin{equation}\label{eq-16}
\begin{split}
\theta_p(t,x)&:=\tilde{g}_{\kappa}(\lambda t)\sum\nolimits_{j}a_j(t,x)\Phi_j^{\mu}(\sigma x),\\
\boldsymbol{w}_p(t,x)&:=\bar{g}_{\kappa}(\lambda t)\sum\nolimits_{j}b_j(t,x)\boldsymbol{W}_j^{\mu}(\sigma x),
\end{split}
\end{equation}
where
\begin{align*}
&a_j(t,x):=\nu\Big(\frac{\|\widetilde{R}_j(t)\|_{L^1}}{\|\widetilde{R}_j\|_{L^{1}_{t,x}}}\Big)^{\frac{1}{s}-\frac{1}{p}}\operatorname{sign}(-R_j)\chi_j\vert R_j\vert ^{\frac{1}{p}},\\
&b_j(t,x):=\nu^{-1}\Big(\frac{\|\widetilde{R}_j(t)\|_{L^1}}{\|\widetilde{R}_j\|_{L^{1}_{t,x}}}\Big)^{\frac{1}{p}-\frac{1}{s}}\chi_j\vert R_j\vert ^{\frac{1}{p'}}.
\end{align*}
Notice $a_jb_j=-\chi_j^2R_j$. By \cite[Lemma 7.1]{CL21}, we have 
\begin{equation*}
a_j,b_j\in C^{\infty}([0,1]\times\mathbb{T}^d),\quad \|\widetilde{R}_j(t)\|_{L^1}\in C^{\infty}([0,1]),
\end{equation*}
and the following estimates hold true:
\begin{equation}\label{eq-17}
\begin{split}
&\|a_j(t)\|_{L^p}\le\nu\|\widetilde{R}_j\|_{L^{1}_{t,x}}^{\frac{1}{p}-\frac{1}{s}}\|\widetilde{R}_j(t)\|_{L^1}^{\frac{1}{s}},\\
&\|b_j(t)\|_{L^{p'}}\le\nu^{-1}\|\widetilde{R}_j\|_{L^{1}_{t,x}}^{\frac{1}{s}-\frac{1}{p}}\|\widetilde{R}_j(t)\|_{L^1}^{\frac{1}{s'}}.
\end{split}
\end{equation}
Moreover for any $k\in\mathbb{N}$, there exists a constant $C_{*}$ such that
\begin{equation}\label{eq-18}
\|a_j\|_{C^k}\le C_*\nu,\quad\|b_j\|_{C^k}\le C_*\nu^{-1}.
\end{equation}

Notice $\theta_p$ is not mean zero and $\boldsymbol{w}_p$ is not divergence-free. To make sure the zero mean of $\rho^1$ and divergence-free $\boldsymbol{u}^1$, we need the corrections of the perturbations. The correctors are defined by
\begin{align*}
\theta_c(t)&:=-\fint_{\mathbb{T}^d}\theta_p(t,x)\,dx,\\
\boldsymbol{w}_c(t,x)&:=-\bar{g}_{\kappa}(\lambda t)\sum\nolimits_{j}\mathcal{B}(\partial_jb_j,\boldsymbol{W}_j^{\mu}(\sigma x)\cdot\boldsymbol{e}_j).
\end{align*}
Notice by Proposition~\ref{prn2.5}, $\boldsymbol{W}_j^{\mu}$ has zero mean, we have
\begin{align*}
\operatorname{div}\mathcal{B}(\partial_jb_j,\boldsymbol{W}_j^{\mu}(\sigma x)\cdot\boldsymbol{e}_j)&=\partial_jb_j\boldsymbol{W}_j^{\mu}(\sigma x)\cdot\boldsymbol{e}_j-\fint \partial_jb_j\boldsymbol{W}_j^{\mu}(\sigma x)\cdot\boldsymbol{e}_j\\
&=\operatorname{div}(b_j\boldsymbol{W}_j^{\mu}(\sigma x)),
\end{align*}
hence
\begin{equation*}
\operatorname{div}\boldsymbol{w}_c(t,x)=-\bar{g}_{\kappa}(\lambda t)\sum\nolimits_{j}\operatorname{div}(b_j\boldsymbol{W}_j^{\mu}(\sigma x))=-\operatorname{div}\boldsymbol{w}_p(t,x).
\end{equation*}

Finally, to take advantage of the temporal oscillations, we define the temporal oscillator
\begin{equation*}
\theta_o(t,x):=\lambda^{-1} h_{\kappa}(\lambda t)\operatorname{div}\sum\nolimits_{j}\chi_j^2R_j\boldsymbol{e}_j.
\end{equation*}
Notice by definition, $\theta_o$ has zero mean.

Now we are able to define the perturbations by
\begin{equation}
\theta:=\theta_p+\theta_c+\theta_o,\quad\boldsymbol{w}:=\boldsymbol{w}_p+\boldsymbol{w}_c,
\end{equation}
and $\rho^1,\boldsymbol{u}^1$ are defined by
\begin{equation}
\rho^1:=\rho+\theta,\quad \boldsymbol{u}^1:=\boldsymbol{u}+\boldsymbol{w}.
\end{equation}

\subsection{Constructing the defect field}

Now we define the new defect field $\boldsymbol{R}^1$ satisfying the continuity-defect equation
\begin{equation*}
\partial_t\rho^1+\boldsymbol{u}^1\cdot\nabla\rho^1=\operatorname{div} \boldsymbol{R}^1.
\end{equation*}

We split $\boldsymbol{R}^1$ into four parts
\begin{equation}
\boldsymbol{R}^1:=\boldsymbol{R}_{\rm lin}+\boldsymbol{R}_{\rm cor}+\boldsymbol{R}_{\rm tem}+\boldsymbol{R}_{\rm osc},
\end{equation}
satisfying
\begin{align*}
\partial_t(\theta_p+\theta_c)&=\operatorname{div}\boldsymbol{R}_{\rm tem},\\
\partial_t\theta_{o}+\operatorname{div}(\theta_p\boldsymbol{w}_p+\boldsymbol{R})&=\operatorname{div}\boldsymbol{R}_{\rm osc},\\
\operatorname{div}(\theta\boldsymbol{u}+\rho\boldsymbol{w})&=\operatorname{div}\boldsymbol{R}_{\rm lin},\\
\operatorname{div}(\theta\boldsymbol{w}_c)+\operatorname{div}(\theta_{o}\boldsymbol{w}_p+\theta_c\boldsymbol{w}_p)&=\operatorname{div}\boldsymbol{R}_{\rm cor}.
\end{align*}

Obviously, $\boldsymbol{R}_{\rm lin},\boldsymbol{R}_{\rm cor}$ can be defined by
\begin{align*}
\boldsymbol{R}_{\rm lin}&:=\theta \boldsymbol{u}+\rho \boldsymbol{w},\\
\boldsymbol{R}_{\rm cor}&:=\theta \boldsymbol{w}_c+(\theta_o+\theta_c)\boldsymbol{w}_p.
\end{align*}

Since
\begin{equation*}
\partial_t(\theta_p+\theta_c)=\partial_t\Big\{\tilde{g}_{\kappa}(\lambda t)\sum\nolimits_{j}\Big(a_j(t,x)\Phi_j^{\mu}(\sigma x)-\fint_{\mathbb{T}^d}a_j(t,x)\Phi_j^{\mu}(\sigma x)\,dx\Big)\Big\},
\end{equation*}
with the help of $\mathcal{B}$, we define $\boldsymbol{R}_{\rm tem}$ by
\begin{equation*}
\boldsymbol{R}_{\rm tem}:=\partial_t\left(\tilde{g}_{\kappa}(\lambda t)\sum\nolimits_{j}\mathcal{B}(a_j,\Phi_j^{\mu}(\sigma x))\right).
\end{equation*}

Now we consider $\boldsymbol{R}_{\rm osc}$. We split $\boldsymbol{R}_{{\rm osc}}$ into three parts
\begin{equation*}
\boldsymbol{R}_{{\rm osc}}:=\boldsymbol{R}_{{\rm osc},x}+\boldsymbol{R}_{{\rm osc},t}+\boldsymbol{R}_{{\rm rem}},
\end{equation*}
where
\begin{align*}
\boldsymbol{R}_{{\rm osc},t}&:=\lambda^{-1} h_{\kappa}(\lambda t)\sum\nolimits_{j}\partial_t(\chi_j^2R_j)\boldsymbol{e}_j,\\
\boldsymbol{R}_{\rm rem}&:=\sum\nolimits_{j}(1-\chi_j^2)R_j\boldsymbol{e}_j.
\end{align*}
Firstly notice
\begin{align*}
\partial_t\theta_{o}&=(\partial_th_{\kappa})(\lambda t)\operatorname{div}\sum\nolimits_{j}\chi_j^2R_j\boldsymbol{e}_j+\lambda^{-1} h_{\kappa}(\lambda t)\partial_t\Big(\operatorname{div}\sum\nolimits_{j}\chi_j^2R_j\boldsymbol{e}_j\Big)\\
&=[1-\bar{g}_{\kappa}\tilde{g}_{\kappa}(\lambda t)]\sum\nolimits_{j}\partial_j(a_jb_j)+\operatorname{div}\boldsymbol{R}_{{\rm osc},t}.
\end{align*}
According to $\operatorname{div}(\Phi_j^{\mu}\boldsymbol{W}_j^{\mu})=0$, and when $i\ne j$, there holds $\Phi_j^{\mu}\boldsymbol{W}_i^{\mu}=0,\boldsymbol{e}_j\cdot\boldsymbol{W}_i^{\mu}=0$. We have
\begin{align*}
\operatorname{div}(\theta_p\boldsymbol{w}_p)&=\bar{g}_{\kappa}\tilde{g}_{\kappa}(\lambda t)\operatorname{div}\sum\nolimits_{j}a_jb_j\Phi_j^{\mu}\boldsymbol{W}_j^{\mu}\\
&=\bar{g}_{\kappa}\tilde{g}_{\kappa}(\lambda t)\sum\nolimits_{j}\partial_j(a_jb_j)\Phi_j^{\mu}\boldsymbol{W}_j^{\mu}\cdot\boldsymbol{e}_j.
\end{align*}
Hence
\begin{align*}
&\partial_t\theta_{o}+\operatorname{div}(\theta_p\boldsymbol{w}_p+\boldsymbol{R})\\
&=\bar{g}_{\kappa}\tilde{g}_{\kappa}(\lambda t)\sum\nolimits_{j}\partial_j(a_jb_j)\big(\Phi_j^{\mu}\boldsymbol{W}_j^{\mu}\cdot\boldsymbol{e}_j-1\big)+\operatorname{div}\boldsymbol{R}_{{\rm osc},t}+\sum\nolimits_{j}\partial_j(a_jb_j)+\operatorname{div}\boldsymbol{R},
\end{align*}

Notice
\begin{equation*}
\sum\nolimits_{j}\partial_j(a_jb_j)+\operatorname{div}\boldsymbol{R}=\operatorname{div}\sum\nolimits_{j}(1-\chi_j^2)R_j\boldsymbol{e}_j=\operatorname{div}\boldsymbol{R}_{\rm rem}.
\end{equation*}
Hence $\boldsymbol{R}_{{\rm osc},x}$ satisfies
\begin{equation*}
\operatorname{div}{\boldsymbol{R}_{{\rm osc},x}}=\bar{g}_{\kappa}\tilde{g}_{\kappa}(\lambda t)\sum\nolimits_{j}\partial_j(a_jb_j)\big(\Phi_j^{\mu}\boldsymbol{W}_j^{\mu}\cdot\boldsymbol{e}_j-1\big).
\end{equation*}
Since
\begin{equation*}
\int_{\mathbb{T}^d}\Phi_j^{\mu}\boldsymbol{W}_j^{\mu}\cdot\boldsymbol{e}_j\,dx=\boldsymbol{e}_j\cdot\boldsymbol{e}_j=1,
\end{equation*}
$\boldsymbol{R}_{{\rm osc},x}$ can be defined by
\begin{equation*}
\boldsymbol{R}_{{\rm osc},x}:=\tilde{g}_{\kappa}\bar{g}_{\kappa}(\lambda t)\sum\nolimits_{j}\mathcal{B}(\partial_j(a_jb_j),\Phi_j^{\mu}\boldsymbol{W}_j^{\mu}(\sigma x)\cdot\boldsymbol{e}_j-1).
\end{equation*}

\subsection{Setting the parameters}\label{sec.4.3}

There are four controllable parameters $\mu,\kappa,\sigma,\lambda$. We will set $\kappa,\sigma,\lambda$ as some positive powers of $\mu$ and then let $\mu$ large enough.

Concentration parameter in space $\mu$: hereafter, we take $\mu\ge\mu_0$ with $\mu_0$ large enough depending on the old solution $(\rho,\boldsymbol{u},\boldsymbol{R})$ and $N,\nu,\delta$ given in Proposition~\ref{prn3.1}, such that all lemmas below hold. Actually, how large $\mu_0$ is and what quantities do $\mu_0$ depend on are inessential as long as $\mu$ is taken to be finite at last, since the most significant matter is that we need to balance the estimates on the perturbations and the new defect field by reasonable setting of $\kappa,\sigma,\lambda$. Recall $\tilde{s}<s'$, the following setting works well.

Concentration parameter in time $\kappa$: setting $\kappa=\mu^{\alpha \frac{s'\tilde{s}}{s'-\tilde{s}}}$
with
\begin{equation}\label{eq-22}
\alpha=1+\frac{(p-1)(d-1)}{2(p+1)p}.
\end{equation}

Oscillation parameter in space $\sigma\in\mathbb{N}$: setting $\sigma=\lfloor\mu^{\beta}\rfloor\le 2\mu^{\beta}$
with
\begin{equation}\label{eq-23}
\beta=\frac{(p-1)(d-1)}{4(p+1)p}.
\end{equation}

Oscillation parameter in time $\lambda\in\mathbb{N}$: setting $\lambda=\lfloor\mu^{\beta/2}\rfloor\le 4\sigma^{1/2}$.

Finally, we choose $N=\lceil\frac{\alpha s'\tilde{s}}{\beta s(s'-\tilde{s})}+\frac{d}{\beta}\rceil$. Where $\lfloor\cdot\rfloor$ is the floor function and $\lceil\cdot\rceil$ is the ceiling function.

\begin{remark}
For the condition \eqref{eq-assum-2}, setting
\begin{align*}
\kappa=\mu^{\alpha \frac{s'\tilde{s}}{s'-\tilde{s}}},\quad \sigma=\lfloor\mu^{\epsilon}\rfloor,\quad \lambda=\lfloor\mu^{\epsilon/2}\rfloor
\end{align*}
with $\epsilon>0$ small enough and
\begin{align*}
\alpha=1+2\beta,\quad\beta=\frac{(p-1)(d-1)}{p}-2\epsilon.
\end{align*}
\end{remark}

\subsection{Estimates on the perturbations}

\begin{lemma}[Estimate on $\theta_p$]\label{lem4-1}
\begin{equation*}
\|\theta_p\|_{L^s_tL^p_x}\lesssim \nu\|\boldsymbol{R}\|_{L^{1}_{t,x}}^{1/p}+\nu C_{*}(\sigma^{-1/p}+\lambda^{-1/s}).
\end{equation*}
In particular, for $\mu$ large enough,
\begin{equation*}
\|\theta_p\|_{L^s_tL^p_x}\lesssim \nu\|\boldsymbol{R}\|_{L^{1}_{t,x}}^{1/p}.
\end{equation*}
\end{lemma}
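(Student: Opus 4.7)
\textbf{Proof proposal for Lemma~\ref{lem4-1}.} The plan is to peel off the two oscillatory pieces in $\theta_p$ — the spatial Mikado factor $\Phi_j^\mu(\sigma\cdot)$ and the temporal factor $\tilde g_\kappa(\lambda\cdot)$ — by applying the improved H\"older inequality (Lemma~\ref{lem2.3}) twice, in space first and then in time, and then control each leading term by the $L^1_{t,x}$ norm of $\boldsymbol R$ while absorbing the error terms into $C_*(\sigma^{-1/p}+\lambda^{-1/s})$.

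First I would exploit the crucial geometric fact from Proposition~\ref{prn2.5} that the supports $\{\operatorname{supp}\Phi_j^\mu\}_j$ are mutually disjoint. This reduces $\|\theta_p(t,\cdot)\|_{L^p_x}^p$ to a sum $\sum_j\|a_j(t,\cdot)\Phi_j^\mu(\sigma\cdot)\|_{L^p_x}^p$, so it suffices to estimate each single-$j$ term. For a fixed $j$, Lemma~\ref{lem2.3} gives
\begin{equation*}
\bigl\|a_j(t,\cdot)\Phi_j^\mu(\sigma\cdot)\bigr\|_{L^p_x}\le\|a_j(t,\cdot)\|_{L^p}\|\Phi_j^\mu\|_{L^p}+C\sigma^{-1/p}\|a_j(t,\cdot)\|_{C^1}\|\Phi_j^\mu\|_{L^p}.
\end{equation*}
The Mikado estimate \eqref{eq-6} with $m=0$, $r=p$ gives $\|\Phi_j^\mu\|_{L^p}\lesssim 1$, and \eqref{eq-18} gives $\|a_j\|_{C^1}\le C_*\nu$, so the error is $C_*\nu\sigma^{-1/p}$.

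Next I would take the $L^s_t$ norm. Since the number of indices $j$ is finite, the $s$-th power of a sum is bounded by a sum of $s$-th powers, so the task reduces to estimating $\|\tilde g_\kappa(\lambda\cdot)\,\|a_j(\cdot)\|_{L^p_x}\|_{L^s_t}$. Using the bound \eqref{eq-17} for $\|a_j(t)\|_{L^p_x}$ in terms of the smooth function $A(t):=\|\widetilde R_j\|_{L^1_{t,x}}^{1/p-1/s}\|\widetilde R_j(t)\|_{L^1}^{1/s}$, I would apply Lemma~\ref{lem2.3} a second time in the time variable (viewed on the $1$-torus with $\lambda\in\mathbb N$ playing the role of $\sigma$):
\begin{equation*}
\bigl\|A(\cdot)\tilde g_\kappa(\lambda\cdot)\bigr\|_{L^s_t}\le\|A\|_{L^s_t}\|\tilde g_\kappa\|_{L^s_t}+C\lambda^{-1/s}\|A\|_{C^1}\|\tilde g_\kappa\|_{L^s_t}.
\end{equation*}
From \eqref{eq-12} with $m=0$, $r=s$ we read off $\|\tilde g_\kappa\|_{L^s_t}\lesssim 1$, and a direct computation gives $\|A\|_{L^s_t}^s=\int_0^1\|\widetilde R_j(t)\|_{L^1}\,dt\cdot\|\widetilde R_j\|_{L^1_{t,x}}^{s/p-1}=\|\widetilde R_j\|_{L^1_{t,x}}^{s/p}$, i.e.\ $\|A\|_{L^s_t}=\|\widetilde R_j\|_{L^1_{t,x}}^{1/p}$. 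The $C^1$ norm of $A$ is finite and absorbed into $C_*$. Combining, each $j$-term contributes $\nu\|\widetilde R_j\|_{L^1_{t,x}}^{1/p}+C_*\nu\lambda^{-1/s}$, and adding the spatial error term $C_*\nu\sigma^{-1/p}\|\tilde g_\kappa\|_{L^s_t}\lesssim C_*\nu\sigma^{-1/p}$ together with $\|\widetilde R_j\|_{L^1_{t,x}}\le\|\boldsymbol R\|_{L^1_{t,x}}$ yields the claimed bound. The ``in particular'' statement follows because, by the parameter choices in Sec.~\ref{sec.4.3}, $\sigma^{-1/p}$ and $\lambda^{-1/s}$ are negative powers of $\mu$ and can be made smaller than $\|\boldsymbol R\|_{L^1_{t,x}}^{1/p}/C_*$ once $\mu$ is large.

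The main technical obstacle I expect is keeping track of the interplay between the two applications of Lemma~\ref{lem2.3}: one must ensure that the error terms generated by the spatial improved H\"older do not get amplified by uncontrolled powers of $\tilde g_\kappa$ when integrated in time, which is why it is important that $\|\tilde g_\kappa\|_{L^s_t}\lesssim 1$ with no power of $\kappa$ appearing (the rescaling $\tilde g_\kappa=\kappa^{1/s}g_\kappa$ is tailored precisely to cancel the $\kappa^{-1/s}$ loss in $\|g_\kappa\|_{L^s_t}$). The rest is bookkeeping and absorption into the constants $C_*$.
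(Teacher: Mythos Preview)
Your proposal follows essentially the same route as the paper: apply the improved H\"older inequality (Lemma~\ref{lem2.3}) first in space to separate $a_j$ from $\Phi_j^\mu(\sigma\cdot)$, then in time to separate the slow amplitude from $\tilde g_\kappa(\lambda\cdot)$, using $\|\Phi_j^\mu\|_{L^p}\lesssim1$ and $\|\tilde g_\kappa\|_{L^s_t}\lesssim1$ throughout. The disjoint-support observation is correct but not needed; the paper simply uses the triangle inequality over the finite index set $j$.

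There is one small technical gap in your time step. You invoke Lemma~\ref{lem2.3} with exponent $r=s$ on the function $A(t)=\|\widetilde R_j\|_{L^1_{t,x}}^{1/p-1/s}\|\widetilde R_j(t)\|_{L^1}^{1/s}$, which you call smooth and whose $C^1$ norm you absorb into $C_*$. While $t\mapsto\|\widetilde R_j(t)\|_{L^1}$ is indeed $C^\infty$, its $1/s$-th power need not be $C^1$ at interior zeros when $s>2$: a smooth nonnegative function generically vanishes only to second order at an isolated zero, so the $1/s$-power behaves like $|t-t_0|^{2/s}$ there. Such interior zeros of $\|\widetilde R_j(t)\|_{L^1}$ are not excluded by the construction of $\chi_j$. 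The paper sidesteps this by applying Lemma~\ref{lem2.3} with $r=1$ to the pair $a(t)=\|\widetilde R_j(t)\|_{L^1}$ and $f=|\tilde g_\kappa|^s$, bounding
\[
\int_0^1|\tilde g_\kappa(\lambda t)|^s\|\widetilde R_j(t)\|_{L^1}\,dt\le\|\widetilde R_j\|_{L^1_{t,x}}\|\tilde g_\kappa\|_{L^s_t}^s+C_*\lambda^{-1},
\]
and then taking the $1/s$-th root. This requires only the $C^1$ norm of $\|\widetilde R_j(\cdot)\|_{L^1}$ itself, which is unproblematic. Your argument is immediately repaired by this reformulation, and otherwise matches the paper's proof.
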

\begin{proof}
\begin{equation*}
\|\theta_p(t)\|_{L^p}\le \vert \tilde{g}_{\kappa}(\lambda t)\vert \sum\nolimits_{j}\|a_j(t)\Phi_j^{\mu}(\sigma\cdot)\|_{L^p}.
\end{equation*}
Since $a_j(t,\cdot)$ is smooth on $\mathbb{T}^d$, by Lemma~\ref{lem2.3} and \eqref{eq-18}, we have
\begin{equation*}
\|a_j(t)\Phi_j^{\mu}(\sigma\cdot)\|_{L^p}\le \|a_j(t)\|_{L^p}\|\Phi_j^{\mu}\|_{L^p}+\nu C_{*}\sigma^{-1/p}\|\Phi_j^{\mu}\|_{L^p}.
\end{equation*}
Combining \eqref{eq-6} and \eqref{eq-17}, we obtain
\begin{equation*}
\|\theta_p(t)\|_{L^p}\lesssim \nu\vert \tilde{g}_{\kappa}(\lambda t)\vert \sum\nolimits_{j}(\|\widetilde{R}_j\|_{L^{1}_{t,x}}^{\frac{1}{p}-\frac{1}{s}}\|\widetilde{R}_j(t)\|_{L^1}^{\frac{1}{s}}+C_{*}\sigma^{-1/p}).
\end{equation*}
Now we take $L^s$ in time to obtain
\begin{equation*}
\|\theta_p\|_{L^s_tL^p_x}\lesssim \nu\sum\nolimits_{j}\|\widetilde{R}_j\|_{L^{1}_{t,x}}^{\frac{1}{p}-\frac{1}{s}}\Big(\int_{[0,1]}\vert \tilde{g}_{\kappa}(\lambda t)\vert ^s\|\widetilde{R}_j(t)\|_{L^1}\,dt\Big)^{1/s}+\nu \|\tilde{g}_{\kappa}\|_{L^s_t}C_{*}\sigma^{-1/p}.
\end{equation*}
Applying Lemma~\ref{lem2.3} in time once again gives
\begin{equation*}
\int_{[0,1]}\vert \tilde{g}_{\kappa}(\lambda t)\vert ^s\|\widetilde{R}_j(t)\|_{L^1}\,dt\le \|\widetilde{R}_j\|_{L^{1}_{t,x}}\|\tilde{g}_{\kappa}\|^s_{L^s_t}+C_{*}\lambda^{-1}
\end{equation*}
Finally, by \eqref{eq-12} we have
\begin{align*}
\|\theta_p\|_{L^s_tL^p_x}&\lesssim \nu\sum\nolimits_{j}\|\widetilde{R}_j\|_{L^{1}_{t,x}}^{1/p}\|\tilde{g}_{\kappa}\|_{L^s_t}+\nu C_{*}(\sigma^{-1/p}+\lambda^{-1/s})\\
&\lesssim \nu\|\boldsymbol{R}\|_{L^{1}_{t,x}}^{1/p}+\nu C_{*}(\sigma^{-1/p}+\lambda^{-1/s}).
\end{align*}
\end{proof}

\begin{lemma}[Estimate on $\theta_c$]\label{lem4-2}
For $N=\lceil\frac{\alpha s'\tilde{s}}{\beta s(s'-\tilde{s})}+\frac{d}{\beta}\rceil$, we have
\begin{equation*}
\|\theta_c\|_{L^{\infty}_t}\le C_{*}\nu\mu^{-\frac{d-1}{p'}-\frac{d-1}{2}}.
\end{equation*}
In particular, for $\mu$ large enough,
\begin{equation*}
\|\theta_c\|_{L^s_tL^p_x}\le\nu\|\boldsymbol{R}\|_{L^{1}_{t,x}}^{1/p}.
\end{equation*}
\end{lemma}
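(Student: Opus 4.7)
The corrector $\theta_c(t) = -\fint_{\mathbb{T}^d}\theta_p(t,x)\,dx = -\tilde g_\kappa(\lambda t)\sum_j \fint_{\mathbb{T}^d} a_j(t,x)\Phi_j^\mu(\sigma x)\,dx$ is a function of $t$ alone, and the integrand is the product of a smooth amplitude $a_j(t,\cdot)$ with the highly oscillatory, mean-zero profile $\Phi_j^\mu(\sigma\cdot)$ (recall $\Phi_j^\mu\in C_0^\infty(\mathbb{T}^d)$ by Proposition~\ref{prn2.5}). This is exactly the setting of Lemma~\ref{lem2.4}, so the plan is to exploit fast oscillation in $x$ with a large even $n$, then collect a crude $L^\infty$ bound in $t$ for $\tilde g_\kappa$, and finally balance the resulting powers of $\mu$ using the parameter dictionary from Sec.~\ref{sec.4.3}.

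Concretely, I would fix an even integer $n$ (to be determined), apply Lemma~\ref{lem2.4} pointwise in $t$ to obtain
\begin{equation*}
\Big|\fint_{\mathbb{T}^d}a_j(t,x)\Phi_j^\mu(\sigma x)\,dx\Big|\lesssim_n \sigma^{-n}\|a_j(t)\|_{C^n}\|\Phi_j^\mu\|_{L^2},
\end{equation*}
then use \eqref{eq-18} to bound $\|a_j(t)\|_{C^n}\le C_*\nu$ and \eqref{eq-6} with $r=2$, $m=0$ to bound $\|\Phi_j^\mu\|_{L^2}\lesssim \mu^{(d-1)(1/p-1/2)}$. Combining with the $L^\infty_t$ estimate $\|\tilde g_\kappa\|_{L^\infty_t}\lesssim \kappa^{1/s}$ from \eqref{eq-12} gives
\begin{equation*}
\|\theta_c\|_{L^\infty_t}\le C_*\nu\,\kappa^{1/s}\sigma^{-n}\mu^{(d-1)(1/p-1/2)}.
\end{equation*}

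The next step is pure bookkeeping of exponents. With $\kappa=\mu^{\alpha s'\tilde s/(s(s'-\tilde s)\cdot s)}$ wait $\kappa=\mu^{\alpha s'\tilde s/(s'-\tilde s)}$, so $\kappa^{1/s}=\mu^{\alpha s'\tilde s/(s(s'-\tilde s))}$, and $\sigma^{-n}\le \mu^{-n\beta}$. The target exponent $-\frac{d-1}{p'}-\frac{d-1}{2}$ equals $-(d-1)+(d-1)(1/p-1/2)$, so it is enough to check
\begin{equation*}
n\beta\ \ge\ \frac{\alpha s'\tilde s}{s(s'-\tilde s)}\,+\,(d-1).
\end{equation*}
The definition $N=\lceil \frac{\alpha s'\tilde s}{\beta s(s'-\tilde s)}+\frac{d}{\beta}\rceil$ is designed so that $N\beta$ exceeds the right-hand side by at least $1$, which leaves room to round $N$ up to the nearest even $n\le N+1$ if necessary (the estimate from \eqref{eq-18} holds for every $n$, so the $C_*$ constant just grows). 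This yields the first bound.

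For the $L^s_tL^p_x$ conclusion, I would simply observe that $\theta_c$ is spatially constant, so $\|\theta_c(t)\|_{L^p_x}=|\theta_c(t)|$ and hence $\|\theta_c\|_{L^s_tL^p_x}\le \|\theta_c\|_{L^\infty_t}\le C_*\nu\mu^{-\frac{d-1}{p'}-\frac{d-1}{2}}$. Since the power of $\mu$ is strictly negative (indeed at most $-1$), absorbing $C_*$ and any fixed factor of $\|\boldsymbol R\|_{L^1_{t,x}}^{1/p}$ into the condition ``$\mu$ large enough'' produces $\|\theta_c\|_{L^s_tL^p_x}\le \nu\|\boldsymbol R\|_{L^1_{t,x}}^{1/p}$. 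There is no real analytic obstacle here; the only subtlety is the exponent arithmetic, which is dictated precisely by the choice of $N$ in Sec.~\ref{sec.4.3}.
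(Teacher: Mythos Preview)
Your proposal is correct and follows essentially the same route as the paper: apply Lemma~\ref{lem2.4} to the oscillatory integral, use the bounds \eqref{eq-18}, \eqref{eq-6}, \eqref{eq-12} on $a_j$, $\Phi_j^\mu$, $\tilde g_\kappa$, and then check the exponent arithmetic against the choice of $N$. You are in fact slightly more careful than the paper in attending to the parity requirement on $n$ in Lemma~\ref{lem2.4}; the paper applies it with $N$ directly and overshoots the target exponent by one extra power of $\mu^{-1}$, which also absorbs any parity adjustment.
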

\begin{proof}
Since
\begin{equation*}
\theta_c=\tilde{g}_{\kappa}(\lambda t)\sum\nolimits_{j}\fint_{\mathbb{T}^d}a_j(t,x)\Phi_j^{\mu}(\sigma x)\,dx,
\end{equation*}
by Lemma~\ref{lem2.4} and notice $N>\frac{\alpha s'\tilde{s}}{\beta s(s'-\tilde{s})}+\frac{d}{\beta}$, we have
\begin{align*}
\|\theta_c\|_{L^{\infty}_t}&\le \sigma^{-N}\|\tilde{g}_{\kappa}\|_{L^{\infty}}\sum\nolimits_{j}\|a_j\|_{C^N}\|\Phi_j^{\mu}\|_{L^2}\\
\text{\scriptsize by Proposition~\ref{prn2.5},\eqref{eq-12},\eqref{eq-18}}&\le C_{*}\nu\sigma^{-N}\kappa^{\frac{1}{s}}\mu^{\frac{d-1}{p}-\frac{d-1}{2}}\\
\text{\scriptsize by \eqref{eq-22},\eqref{eq-23}}&\le C_{*}\nu\mu^{\frac{d-1}{p}-\frac{d-1}{2}-d}.
\end{align*}
\end{proof}

\begin{lemma}[Estimate on $\theta_o$]\label{lem4-3}
\begin{equation*}
\|\theta_o\|_{L^{\infty}_{t,x}}\le C_{*}\lambda^{-1}.
\end{equation*}
In particular, for $\mu$ large enough,
\begin{equation*}
\|\theta_o\|_{L^s_tL^p_x}\le\nu\|\boldsymbol{R}\|_{L^{1}_{t,x}}^{1/p}.
\end{equation*}
\end{lemma}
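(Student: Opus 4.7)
The proof plan for Lemma~\ref{lem4-3} is essentially to read off a pointwise $L^\infty$ bound from the definition of $\theta_o$, and then promote it to the claimed $L^s_tL^p_x$ estimate by absorbing the prefactor $\lambda^{-1}$ into the largeness of $\mu$.

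For the first inequality, I would start from the definition
\begin{equation*}
\theta_o(t,x) = \lambda^{-1} h_{\kappa}(\lambda t)\,\operatorname{div}\sum\nolimits_{j}\chi_j^2 R_j \boldsymbol{e}_j
= \lambda^{-1} h_{\kappa}(\lambda t)\sum\nolimits_{j}\partial_j(\chi_j^2 R_j),
\end{equation*}
and apply the triangle inequality. The temporal factor is controlled by $\|h_{\kappa}\|_{L^\infty_t}\le 1$, which is exactly the second estimate in \eqref{eq-13}. The spatial factor $\sum_j \partial_j(\chi_j^2 R_j)$ is a fixed smooth function built from the old solution $\boldsymbol{R}$ and the cutoffs $\chi_j$ (which depend only on $\boldsymbol{R}$, $\delta$, and $r$), and in particular does not depend on $\mu,\kappa,\sigma,\lambda$. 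Hence its $C^0$-norm is bounded by some constant $C_*$ in the sense of the convention declared at the end of Section~1, giving $\|\theta_o\|_{L^\infty_{t,x}}\le C_*\lambda^{-1}$.

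For the second inequality, the plan is to use the trivial embedding on the finite-measure domain $[0,1]\times\mathbb{T}^d$:
\begin{equation*}
\|\theta_o\|_{L^s_tL^p_x}\le \|\theta_o\|_{L^\infty_{t,x}}\le C_*\lambda^{-1}.
\end{equation*}
Since $\lambda=\lfloor\mu^{\beta/2}\rfloor\to\infty$ as $\mu\to\infty$, while $\nu$, $\|\boldsymbol{R}\|_{L^1_{t,x}}^{1/p}$, and $C_*$ are independent of $\mu$, choosing $\mu$ sufficiently large forces $C_*\lambda^{-1}\le \nu\|\boldsymbol{R}\|_{L^1_{t,x}}^{1/p}$ (the degenerate case $\boldsymbol{R}\equiv 0$ is trivial since then $R_j\equiv 0$ and therefore $\theta_o\equiv 0$).

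There is no real obstacle here: unlike the estimates for $\theta_p$ and $\theta_c$, which require the improved Hölder inequality (Lemma~\ref{lem2.3}) and the mean-zero estimate (Lemma~\ref{lem2.4}) to exploit oscillation, the temporal oscillator $\theta_o$ already carries an explicit gain $\lambda^{-1}$ produced by the antiderivative $h_\kappa$ of $\bar g_\kappa\tilde g_\kappa-1$, so only crude pointwise bounds are needed. The only thing to keep track of is that the constant absorbing $\|\partial_j(\chi_j^2 R_j)\|_{L^\infty}$ is legitimately a $C_*$ in the sense of the paper, which is immediate from how $\chi_j$ was built in \eqref{eq-14}--\eqref{eq-15}.
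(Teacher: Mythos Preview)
Your proposal is correct and follows essentially the same approach as the paper: bound $\|h_\kappa\|_{L^\infty_t}\le 1$ via \eqref{eq-13}, absorb $\sum_j\|\partial_j(\chi_j^2 R_j)\|_{L^\infty_{t,x}}$ into a constant $C_*$, and then take $\mu$ large so that $C_*\lambda^{-1}$ beats $\nu\|\boldsymbol{R}\|_{L^1_{t,x}}^{1/p}$. Your treatment is in fact slightly more explicit than the paper's, since you spell out the embedding $\|\theta_o\|_{L^s_tL^p_x}\le\|\theta_o\|_{L^\infty_{t,x}}$ and address the degenerate case $\boldsymbol{R}\equiv 0$.
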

\begin{proof}
We control $\|\nabla(\chi_j^2R_j)\|_{L^{\infty}_{t,x}}$ simply by a constant $C_*$, then by \eqref{eq-13} we obtain
\begin{equation*}
\|\theta_o\|_{L^{\infty}_{t,x}}\le\lambda^{-1}\|h_{\kappa}\|_{L^{\infty}_t}\sum\nolimits_{j}\|e_j\cdot\nabla(\chi_j^2R_j)\|_{L^{\infty}_{t,x}}\le C_{*}\lambda^{-1}.
\end{equation*}
\end{proof}

\begin{lemma}[Estimate on $\boldsymbol{w}_p$ with $L^{s^{\prime}}_tL^{p^{\prime}}_x$ norm]\label{}
\begin{equation*}
\|\boldsymbol{w}_p\|_{L^{s^{\prime}}_tL^{p^{\prime}}_x}\lesssim \sum\nolimits_{j}\nu^{-1}\|\widetilde{R}_j\|_{L^{1}_{t,x}}^{1-\frac{1}{p}}+\nu^{-1}C_{*}(\lambda^{-1/s'}+\sigma^{-1/p'})
\end{equation*}
In particular, for $\mu$ large enough,
\begin{equation*}
\|\boldsymbol{w}_p\|_{L^{s^{\prime}}_tL^{p^{\prime}}_x}\lesssim\nu^{-1}\|\boldsymbol{R}\|_{L^{1}_{t,x}}^{1/p^{\prime}}.
\end{equation*}
\end{lemma}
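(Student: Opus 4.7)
The plan is to mirror Lemma~\ref{lem4-1} with the roles of $(p,s)$ and $(p',s')$ swapped, using the velocity building blocks $(\bar g_\kappa, b_j, \boldsymbol{W}_j^\mu)$ in place of the density ones $(\tilde g_\kappa, a_j, \Phi_j^\mu)$. For fixed $t$ I would first apply the triangle inequality in $j$ and then bound each spatial factor by the improved Hölder inequality (Lemma~\ref{lem2.3}) with $a = b_j(t,\cdot)$, $f = \boldsymbol{W}_j^\mu$ and exponent $r = p'$:
\begin{equation*}
\|b_j(t)\boldsymbol{W}_j^\mu(\sigma\cdot)\|_{L^{p'}} \le \|b_j(t)\|_{L^{p'}}\|\boldsymbol{W}_j^\mu\|_{L^{p'}} + C\,\sigma^{-1/p'}\|b_j(t)\|_{C^1}\|\boldsymbol{W}_j^\mu\|_{L^{p'}}.
\end{equation*}
Combining with the Mikado bound $\|\boldsymbol{W}_j^\mu\|_{L^{p'}}\lesssim 1$ from \eqref{eq-6}, the pointwise-in-time coefficient estimate from \eqref{eq-17}, and $\|b_j\|_{C^1}\le C_*\nu^{-1}$ from \eqref{eq-18}, produces
\begin{equation*}
\|\boldsymbol{w}_p(t)\|_{L^{p'}} \lesssim |\bar g_\kappa(\lambda t)| \sum\nolimits_{j}\bigl(\nu^{-1}\|\widetilde R_j\|_{L^1_{t,x}}^{1/s-1/p}\|\widetilde R_j(t)\|_{L^1}^{1/s'} + \nu^{-1}C_*\sigma^{-1/p'}\bigr).
\end{equation*}

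Second, I would take $L^{s'}$ in time and, just as in Lemma~\ref{lem4-1}, raise to the $s'$-th power to reduce each main term to $\int_0^1 |\bar g_\kappa(\lambda t)|^{s'}\|\widetilde R_j(t)\|_{L^1}\,dt$. A second application of Lemma~\ref{lem2.3}, now with $a(t) = \|\widetilde R_j(t)\|_{L^1}$ (smooth in $t$ by Lemma~7.1 of \cite{CL21}) and $f = |\bar g_\kappa|^{s'}$ at concentration $\lambda$, together with $\|\bar g_\kappa\|_{L^{s'}_t}^{s'}\lesssim 1$ from \eqref{eq-12}, yields
\begin{equation*}
\Bigl(\int_0^1 |\bar g_\kappa(\lambda t)|^{s'}\|\widetilde R_j(t)\|_{L^1}\,dt\Bigr)^{1/s'} \lesssim \|\widetilde R_j\|_{L^1_{t,x}}^{1/s'} + C_*\lambda^{-1/s'},
\end{equation*}
while the error contribution $\nu^{-1} C_*\sigma^{-1/p'}\|\bar g_\kappa(\lambda\cdot)\|_{L^{s'}_t}$ is controlled by $\|\bar g_\kappa\|_{L^{s'}_t}\lesssim 1$ alone. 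Summing over $j$, the main piece carries the exponent $(1/s - 1/p) + 1/s' = 1 - 1/p = 1/p'$ on $\|\widetilde R_j\|_{L^1_{t,x}}$, which is exactly the first displayed bound.

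The concluding ``in particular'' clause follows because $\|\widetilde R_j\|_{L^1_{t,x}}\le \|\boldsymbol{R}\|_{L^1_{t,x}}$ and because the parameter choices of Section~\ref{sec.4.3}, $\sigma = \lfloor\mu^{\beta}\rfloor$ and $\lambda = \lfloor\mu^{\beta/2}\rfloor$, send $\sigma^{-1/p'} + \lambda^{-1/s'}$ to zero with $\mu$, so the remainder is dominated by $\nu^{-1}\|\boldsymbol{R}\|_{L^1_{t,x}}^{1/p'}$ once $\mu$ is large enough. I do not anticipate a real obstacle: this lemma is the dual companion of Lemma~\ref{lem4-1}, and the only point requiring care is the exponent arithmetic in the two successive applications of the improved Hölder inequality, both in space (at scale $\sigma$) and in time (at scale $\lambda$).
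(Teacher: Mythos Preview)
Your proposal is correct and follows essentially the same approach as the paper: apply the improved H\"older inequality (Lemma~\ref{lem2.3}) first in space at scale $\sigma$ using \eqref{eq-6}, \eqref{eq-17}, \eqref{eq-18}, then in time at scale $\lambda$ using \eqref{eq-12}, and collect the exponents to get $1/p'$. The paper's proof is the exact dual of Lemma~\ref{lem4-1} as you describe, and your exponent bookkeeping matches.
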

\begin{proof}
By Lemma~\ref{lem2.3} and \eqref{eq-18},
\begin{align*}
\|\boldsymbol{w}_p(t)\|_{L^{p'}}&\le\vert \bar{g}_{\kappa}(\lambda t)\vert \sum\nolimits_{j}\|b_j(t)\boldsymbol{W}_j^{\mu}(\sigma\cdot)\|_{L^{p'}}\\
&\le\vert \bar{g}_{\kappa}(\lambda t)\vert \sum\nolimits_{j}(\|b_j(t)\|_{L^{p'}}\|\boldsymbol{W}_j^{\mu}\|_{L^{p'}}+\nu^{-1}C_{*}\sigma^{-1/p'}\|\boldsymbol{W}_j^{\mu}\|_{L^{p'}}).
\end{align*}
By Proposition~\ref{prn2.5} and \eqref{eq-12}, we have $\|\boldsymbol{W}_j^{\mu}\|_{L^{p'}}\lesssim 1$ and $\|\bar{g}_{\kappa}(\lambda \cdot)\|_{L^{s'}_t}=\|\bar{g}_{\kappa}\|_{L^{s'}_t}\lesssim 1$. Hence
\begin{align*}
\|\boldsymbol{w}_p\|_{L^{s^{\prime}}_tL^{p^{\prime}}_x}&\lesssim \sum\nolimits_{j}\Big(\int_{[0,1]}\vert \bar{g}_{\kappa}(\lambda t)\vert ^{s'}\|b_j(t)\|_{L^{p'}}^{s'}\,dt\Big)^{1/s'}+\nu^{-1}C_{*}\sigma^{-1/p'}\\
\text{\scriptsize by \eqref{eq-17}}&\lesssim \sum\nolimits_{j}\nu^{-1}\|\widetilde{R}_j\|_{L^{1}_{t,x}}^{\frac{1}{s}-\frac{1}{p}}\Big(\int_{[0,1]}\vert \bar{g}_{\kappa}(\lambda t)\vert ^{s'}\|\widetilde{R}_j(t)\|_{L^1}\,dt\Big)^{\frac{1}{s'}}+\nu^{-1}C_{*}\sigma^{-\frac{1}{p'}}\\
\text{\scriptsize by Lemma~\ref{lem2.3}}&\lesssim \sum\nolimits_{j}\nu^{-1}\|\widetilde{R}_j\|_{L^{1}_{t,x}}^{\frac{1}{s}-\frac{1}{p}}\Big(\|\widetilde{R}_j\|_{L^{1}_{t,x}}\|\bar{g}_{\kappa}\|^{s'}_{L^{s'}_t}+C_{*}\lambda^{-1}\Big)^{\frac{1}{s'}}+\nu^{-1}C_{*}\sigma^{-\frac{1}{p'}}\\
&\lesssim \sum\nolimits_{j}\nu^{-1}\|\widetilde{R}_j\|_{L^{1}_{t,x}}^{1-\frac{1}{p}}+\nu^{-1}C_{*}(\lambda^{-1/s'}+\sigma^{-1/p'})
\end{align*}
\end{proof}

\begin{lemma}[Estimate on $\boldsymbol{w}_p$ with $L^{\tilde{s}}_tW^{1,q}_x$ norm]\label{lem4-5}
\begin{equation*}
\|\boldsymbol{w}_p\|_{L^{\tilde{s}}_tW^{1,q}_x}\le\nu^{-1}C_{*}\mu^{\gamma_1},
\end{equation*}
for some $\gamma_1<0$. In particular, for $\mu$ large enough, we have
\begin{equation*}
\|\boldsymbol{w}_p\|_{L^{\tilde{s}}_tW^{1,q}_x}\le\frac{\delta}{20}.
\end{equation*}
\end{lemma}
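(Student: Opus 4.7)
The plan is to differentiate $\boldsymbol{w}_p$ by the product rule, bound each resulting piece in $L^q_x$ using the uniform $C^k$ estimates \eqref{eq-18} on $b_j$ together with the Mikado scaling \eqref{eq-6}, then take $L^{\tilde s}_t$ via the periodicity of $\bar g_\kappa$ and the intermittent bound \eqref{eq-12}, and finally verify that the exponent of $\mu$ is strictly negative after substituting the parameter choices of Sec.~\ref{sec.4.3}.

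By the product rule,
\begin{equation*}
\nabla\boldsymbol{w}_p(t,x)=\bar g_\kappa(\lambda t)\sum_{j}\bigl[\nabla b_j(t,x)\otimes\boldsymbol{W}_j^\mu(\sigma x)+\sigma\, b_j(t,x)(\nabla\boldsymbol{W}_j^\mu)(\sigma x)\bigr].
\end{equation*}
Using $\|f(\sigma\cdot)\|_{L^q}=\|f\|_{L^q}$ for $\sigma\in\mathbb{N}_+$, combined with $\|b_j\|_{C^1}\le C_*\nu^{-1}$ and $\|\nabla^m\boldsymbol{W}_j^\mu\|_{L^q}\lesssim_m\mu^{m+\frac{d-1}{p'}-\frac{d-1}{q}}$, the gradient term dominates the $\nabla b_j$-term by a factor of $\sigma\mu$, so
\begin{equation*}
\|\boldsymbol{w}_p(t)\|_{W^{1,q}_x}\le C_*\nu^{-1}\lvert\bar g_\kappa(\lambda t)\rvert\,\sigma\,\mu^{1+\frac{d-1}{p'}-\frac{d-1}{q}}.
\end{equation*}
Since $\lambda\in\mathbb N$ and $\bar g_\kappa$ is $1$-periodic, a change of variables gives $\|\bar g_\kappa(\lambda\cdot)\|_{L^{\tilde s}_t}=\|\bar g_\kappa\|_{L^{\tilde s}_t}\lesssim\kappa^{1/s'-1/\tilde s}$ by \eqref{eq-12}, so
\begin{equation*}
\|\boldsymbol{w}_p\|_{L^{\tilde s}_tW^{1,q}_x}\le C_*\nu^{-1}\kappa^{1/s'-1/\tilde s}\sigma\mu^{1+\frac{d-1}{p'}-\frac{d-1}{q}}.
\end{equation*}

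The only genuinely tight step is the arithmetic verification that the exponent of $\mu$ is negative. Plugging $\kappa=\mu^{\alpha s'\tilde s/(s'-\tilde s)}$ gives $\kappa^{1/s'-1/\tilde s}=\mu^{-\alpha}$, and with $\sigma\le 2\mu^\beta$ the exponent becomes
\begin{equation*}
\gamma_1=-\alpha+\beta+1+\frac{d-1}{p'}-\frac{d-1}{q}.
\end{equation*}
From \eqref{eq-22}--\eqref{eq-23} the tuning is $\alpha=1+2\beta$, hence $\gamma_1=-\beta+(d-1)\bigl(\tfrac{1}{p'}-\tfrac{1}{q}\bigr)$, and the hypothesis $1/p+1/q>1-\frac{p-1}{4(p+1)p}$ in \eqref{eq-assum} is exactly equivalent to $(d-1)(1/p'-1/q)<\beta$, so $\gamma_1<0$. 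The second claim $\|\boldsymbol{w}_p\|_{L^{\tilde s}_tW^{1,q}_x}\le\delta/20$ then follows for $\mu$ sufficiently large, because $C_*\nu^{-1}$ is independent of $\mu$.

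The main conceptual point to emphasize is that the constants $\alpha$ and $\beta$ have been hand-picked in Sec.~\ref{sec.4.3} precisely so that the gap $\alpha-\beta-1=\beta$ is just large enough to absorb both the single derivative gained from $\boldsymbol{W}_j^\mu$ (which costs $\sigma\mu\approx\mu^{1+\beta}$) and the spatial Sobolev deficit $(d-1)(1/p'-1/q)$ permitted by \eqref{eq-assum}. All the other pieces (product rule, scale-invariance on $\mathbb{T}^d$ under integer dilations, $L^p$-bounds on the Mikado flows) are routine; the only inequality with no slack is the final comparison $(d-1)(1/p'-1/q)<\beta$.
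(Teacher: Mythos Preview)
Your proof is correct and follows essentially the same route as the paper: product rule on $b_j\boldsymbol{W}_j^\mu(\sigma\cdot)$, the $C^1$ bound \eqref{eq-18} on $b_j$ together with the Mikado estimate \eqref{eq-6}, then the $L^{\tilde s}_t$ bound on $\bar g_\kappa$ from \eqref{eq-12}, arriving at the identical exponent $\gamma_1=1+\frac{d-1}{p'}-\frac{d-1}{q}+\beta-\alpha$. Your explicit rewriting $\gamma_1=-\beta+(d-1)(1/p'-1/q)$ and the observation that the first inequality in \eqref{eq-assum} is precisely $(d-1)(1/p'-1/q)<\beta$ make the negativity of $\gamma_1$ more transparent than the paper's one-line statement of the same fact.
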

\begin{remark}
$s<\infty$ and the first condition of \eqref{eq-assum} are required in this estimate.
\end{remark}
\begin{proof}
\begin{align*}
\|\boldsymbol{w}_p(t)\|_{W^{1,q}_x}&\le \vert \bar{g}_{\kappa}(\lambda t)\vert \sum\nolimits_{j}\|b_j(t)\boldsymbol{W}_j^{\mu}(\sigma\cdot)\|_{W^{1,q}_x}\\
&\le \vert \bar{g}_{\kappa}(\lambda t)\vert \sum\nolimits_{j}\|b_j\|_{C^1}\|\boldsymbol{W}_j^{\mu}(\sigma\cdot)\|_{W^{1,q}_x}\\
\text{\scriptsize by \eqref{eq-18}}&\le \nu^{-1}C_{*}\vert \bar{g}_{\kappa}(\lambda t)\vert \sum\nolimits_{j}(\|\boldsymbol{W}_j^{\mu}\|_{q}+\sigma\|\nabla\boldsymbol{W}_j^{\mu}\|_{L^q})\\
\text{\scriptsize by Proposition~\ref{prn2.5}}&\le \nu^{-1}C_{*}\vert \bar{g}_{\kappa}(\lambda t)\vert \sigma\mu^{1+\frac{d-1}{p'}-\frac{d-1}{q}}.
\end{align*}
Hence
\begin{align*}
\|\boldsymbol{w}_p\|_{L^{\tilde{s}}_tW^{1,q}_x}&\le\nu^{-1}C_{*}\sigma\mu^{1+\frac{d-1}{p'}-\frac{d-1}{q}}\|\bar{g}_{\kappa}\|_{L^{\tilde{s}}([0,1])}\\
\text{\scriptsize by \eqref{eq-12}}&\le\nu^{-1}C_{*}\sigma\mu^{1+\frac{d-1}{p'}-\frac{d-1}{q}}\kappa^{\frac{\tilde{s}-s'}{s'\tilde{s}}}\\
\text{\scriptsize by \eqref{eq-22},\eqref{eq-23}}&\le\nu^{-1}C_{*}\mu^{1+\frac{d-1}{p'}-\frac{d-1}{q}+\beta-\alpha}\\
&\le\nu^{-1}C_{*}\mu^{\gamma_1},
\end{align*}
where $\gamma_1=(d-1)[1-\frac{p-1}{4(p+1)p}-(\frac{1}{p}+\frac{1}{q})]<0$ by the assumption \eqref{eq-assum}.
\end{proof}

\begin{lemma}[Estimate on $\boldsymbol{w}_c$]\label{lem4-6}
\begin{align*}
\|\boldsymbol{w}_c\|_{L^{s'}_tL^{p^{\prime}}_x}&\le\nu^{-1}C_{*}\sigma^{-1},\\
\|\boldsymbol{w}_c\|_{L^{\tilde{s}}_tW^{1,q}_x}&\le\nu^{-1}C_{*}\mu^{\gamma_2}
\end{align*}
for some $\gamma_2<0$. In particular, for $\mu$ large enough, we have
\begin{align*}
\|\boldsymbol{w}_c\|_{L^{s^{\prime}}_tL^{p^{\prime}}_x}&\le\nu^{-1}\|\boldsymbol{R}\|_{L^{1}_{t,x}}^{1/p^{\prime}},\\
\|\boldsymbol{w}_c\|_{L^{\tilde{s}}_tW^{1,q}_x}&\le\frac{\delta}{20}.
\end{align*}
\end{lemma}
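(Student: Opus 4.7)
The plan is to bound $\boldsymbol{w}_c = -\bar{g}_\kappa(\lambda t)\sum_j \mathcal{B}(\partial_j b_j,\boldsymbol{W}_j^\mu(\sigma\cdot)\cdot\boldsymbol{e}_j)$ by applying Lemma~\ref{lem2.2} to pass from $\mathcal{B}$ to $\mathcal{R}$, then exploiting the scaling identity $\mathcal{R}[g(\sigma\cdot)](x)=\sigma^{-1}(\mathcal{R}g)(\sigma x)$ (valid since $\boldsymbol{W}_j^\mu\cdot\boldsymbol{e}_j\in C_0^\infty$) together with the Calder\'on-Zygmund bound in Lemma~\ref{lem2.1}. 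The coefficient $\partial_j b_j$ is uniformly controlled by $\|\partial_j b_j\|_{C^1}\le C_*\nu^{-1}$ via \eqref{eq-18}, and the temporal factor $\bar{g}_\kappa(\lambda t)$ is handled using the $L^r_t$ bounds in \eqref{eq-12}. The proof then splits into two independent computations, one for each norm.

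For the $L^{s'}_tL^{p'}_x$ estimate, Lemma~\ref{lem2.2} together with the scaling identity and Lemma~\ref{lem2.1} yields
\[
\|\mathcal{B}(\partial_j b_j,\boldsymbol{W}_j^\mu(\sigma\cdot)\cdot\boldsymbol{e}_j)\|_{L^{p'}}\lesssim \|\partial_jb_j\|_{C^1}\sigma^{-1}\|\mathcal{R}(\boldsymbol{W}_j^\mu\cdot\boldsymbol{e}_j)\|_{L^{p'}}\le C_*\nu^{-1}\sigma^{-1}\|\boldsymbol{W}_j^\mu\|_{L^{p'}},
\]
and the last factor is $\lesssim 1$ by Proposition~\ref{prn2.5} with $m=0,r=p'$. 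Integrating in time, $\|\bar{g}_\kappa(\lambda\cdot)\|_{L^{s'}_t}=\|\bar{g}_\kappa\|_{L^{s'}_t}\lesssim 1$ by \eqref{eq-12}, giving $\|\boldsymbol{w}_c\|_{L^{s'}_tL^{p'}_x}\le\nu^{-1}C_*\sigma^{-1}$. Since $\sigma^{-1}=\lfloor\mu^\beta\rfloor^{-1}\to 0$, the right-hand side is smaller than $\nu^{-1}\|\boldsymbol{R}\|^{1/p'}_{L^1_{t,x}}$ once $\mu$ is taken large enough.

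For the $L^{\tilde{s}}_tW^{1,q}_x$ estimate, the gradient destroys the $\sigma^{-1}$ gain: differentiating the scaling identity gives $\nabla\mathcal{R}[g(\sigma\cdot)](x)=(\nabla\mathcal{R}g)(\sigma x)$. Expanding $\mathcal{B}(a,f)=a\mathcal{R}f-\mathcal{R}(\nabla a\cdot\mathcal{R}f)$ and differentiating, the dominant term is $a\,\nabla\mathcal{R}f$, controlled by $\|a\|_{C^1}\|\nabla\mathcal{R}(\boldsymbol{W}_j^\mu\cdot\boldsymbol{e}_j)\|_{L^q}$; since $\nabla\mathcal{R}=\nabla^2\Delta^{-1}$ is Calder\'on-Zygmund, \eqref{eq-5} and Proposition~\ref{prn2.5} give $\|\nabla\mathcal{R}(\boldsymbol{W}_j^\mu\cdot\boldsymbol{e}_j)\|_{L^q}\lesssim\|\boldsymbol{W}_j^\mu\|_{L^q}\lesssim\mu^{\frac{d-1}{p'}-\frac{d-1}{q}}$. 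Hence $\|\mathcal{B}(\partial_j b_j,\boldsymbol{W}_j^\mu(\sigma\cdot)\cdot\boldsymbol{e}_j)\|_{W^{1,q}}\le C_*\nu^{-1}\mu^{\frac{d-1}{p'}-\frac{d-1}{q}}$, and integrating in time with $\|\bar{g}_\kappa\|_{L^{\tilde{s}}_t}\lesssim\kappa^{1/s'-1/\tilde{s}}=\mu^{-\alpha}$ (using the choice of $\kappa$ in Sec.~\ref{sec.4.3} and $\tilde{s}<s'$) produces $\|\boldsymbol{w}_c\|_{L^{\tilde{s}}_tW^{1,q}_x}\le\nu^{-1}C_*\mu^{\gamma_2}$ with
\[
\gamma_2=\frac{d-1}{p'}-\frac{d-1}{q}-\alpha.
\]

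The main obstacle is verifying $\gamma_2<0$, which is the exact place where the spatial integrability hypothesis enters. Substituting $\alpha=1+\frac{(p-1)(d-1)}{2(p+1)p}$ and applying the first inequality in \eqref{eq-assum} in the form $(d-1)\bigl(1-\tfrac1p-\tfrac1q\bigr)<\tfrac{(p-1)(d-1)}{4(p+1)p}$, I expect to obtain
\[
\gamma_2<\frac{(p-1)(d-1)}{4(p+1)p}-1-\frac{(p-1)(d-1)}{2(p+1)p}=-1-\frac{(p-1)(d-1)}{4(p+1)p}<0.
\]
With $\gamma_2<0$ secured, $\mu^{\gamma_2}\to 0$ as $\mu\to\infty$, so both refined bounds $\|\boldsymbol{w}_c\|_{L^{s'}_tL^{p'}_x}\le\nu^{-1}\|\boldsymbol{R}\|^{1/p'}_{L^1_{t,x}}$ and $\|\boldsymbol{w}_c\|_{L^{\tilde{s}}_tW^{1,q}_x}\le\delta/20$ hold for $\mu$ sufficiently large, completing the lemma.
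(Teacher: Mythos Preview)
Your approach coincides with the paper's: for the $L^{s'}_tL^{p'}_x$ bound you use Lemma~\ref{lem2.2} together with the scaling of $\mathcal{R}$ exactly as the paper does, and for the $W^{1,q}$ bound you expand $\mathcal{B}$ and combine the spatial estimate with $\|\bar g_\kappa\|_{L^{\tilde s}_t}\lesssim\kappa^{1/s'-1/\tilde s}=\mu^{-\alpha}$.

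There is one technical discrepancy worth flagging. In the $W^{1,q}$ step you invoke the Calder\'on--Zygmund inequality \eqref{eq-5} to obtain $\|\nabla\mathcal{R}(\boldsymbol{W}_j^\mu\cdot\boldsymbol{e}_j)\|_{L^q}\lesssim\|\boldsymbol{W}_j^\mu\|_{L^q}$, which saves a factor of $\mu$ and produces your exponent $\gamma_2=\tfrac{d-1}{p'}-\tfrac{d-1}{q}-\alpha$. However, \eqref{eq-5} requires $1<q<\infty$, whereas Proposition~\ref{prn3.1} allows $q\in[1,\infty)$; at $q=1$ your chain breaks. The paper sidesteps this by using only the $W^{m,r}$-boundedness of $\mathcal{R}$ in Lemma~\ref{lem2.1} (valid for every $r\in[1,\infty]$), bounding instead
\[
\|\nabla\mathcal{R}(\boldsymbol{W}_j^\mu\cdot\boldsymbol{e}_j)\|_{L^q}\le\|\mathcal{R}(\boldsymbol{W}_j^\mu\cdot\boldsymbol{e}_j)\|_{W^{1,q}}\lesssim\|\boldsymbol{W}_j^\mu\|_{W^{1,q}}\lesssim\mu^{1+\frac{d-1}{p'}-\frac{d-1}{q}}.
\]
This costs the extra $\mu$ and gives the paper's $\gamma_2=(d-1)\bigl[1-\tfrac{p-1}{2(p+1)p}-(\tfrac1p+\tfrac1q)\bigr]$, still negative under \eqref{eq-assum}. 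Your estimate is genuinely sharper for $q>1$, but to cover the endpoint $q=1$ you should fall back to the paper's cruder route.
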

\begin{remark}
$s<\infty$ and the first condition of \eqref{eq-assum} are required in this estimate.
\end{remark}
\begin{proof}
By Lemma~\ref{lem2.2}, Proposition~\ref{prn2.5} and \eqref{eq-18}, we have
\begin{align*}
\|\boldsymbol{w}_c(t)\|_{L^{p'}}&\lesssim\vert \bar{g}_{\kappa}(\lambda t)\vert \sum\nolimits_{j}\|b_j\|_{C^2}\|\mathcal{R}\boldsymbol{W}_j^{\mu}(\sigma\cdot)\|_{L^{p'}}\\
&\le\nu^{-1}C_{*}\vert \bar{g}_{\kappa}(\lambda t)\vert \sum\nolimits_{j}\sigma^{-1}\|\boldsymbol{W}_j^{\mu}\|_{L^{p'}}\\
&\le \nu^{-1}C_{*}\vert \bar{g}_{\kappa}(\lambda t)\vert \sigma^{-1}.
\end{align*}
Hence by \eqref{eq-12}, we have
\begin{equation*}
\|\boldsymbol{w}_c\|_{L^{s'}_tL^{p^{\prime}}_x}\le\nu^{-1}C_{*}\|\bar{g}_{\kappa}\|_{L^{s'}_t}\sigma^{-1}\le\nu^{-1}C_{*}\sigma^{-1}.
\end{equation*}

By Lemma~\ref{lem2.1}, Proposition~\ref{prn2.5} and the definition of $\mathcal{B}$, we have
\begin{align*}
\|\boldsymbol{w}_c(t)\|_{W^{1,q}_x}&\lesssim\vert \bar{g}_{\kappa}(\lambda t)\vert \sum\nolimits_{j}\|\mathcal{B}(\partial_jb_j,\boldsymbol{W}_j^{\mu}(\sigma\cdot)\cdot\boldsymbol{e}_j)\|_{W^{1,q}_x}\\
&\lesssim\vert \bar{g}_{\kappa}(\lambda t)\vert \sum\nolimits_{j}(\|\partial_jb_j\mathcal{R}(\boldsymbol{W}_j^{\mu}(\sigma\cdot)\cdot\boldsymbol{e}_j)\|_{W^{1,q}_x}\\
&\quad+\|\mathcal{R}(\nabla(\partial_jb_j)\cdot(\mathcal{R}(\boldsymbol{W}_j^{\mu}(\sigma\cdot)\cdot\boldsymbol{e}_j)))\|_{W^{1,q}_x})\\
&\lesssim\vert \bar{g}_{\kappa}(\lambda t)\vert \sum\nolimits_{j}(\|b_j\|_{C^2}\|\boldsymbol{W}_j^{\mu}(\cdot)\cdot\boldsymbol{e}_j\|_{W^{1,q}_x}+\|b_j\|_{C^3}\|\boldsymbol{W}_j^{\mu}(\cdot)\cdot\boldsymbol{e}_j\|_{W^{1,q}_x})\\
\text{\scriptsize by \eqref{eq-18}}&\le\nu^{-1}C_{*}\vert \bar{g}_{\kappa}(\lambda t)\vert \mu^{1+\frac{d-1}{p'}-\frac{d-1}{q}}
\end{align*}
Hence
\begin{align*}
\|\boldsymbol{w}_c\|_{L^{\tilde{s}}_tW^{1,q}_x}&\le\nu^{-1}C_{*}\mu^{1+\frac{d-1}{p'}-\frac{d-1}{q}}\|\bar{g}_{\kappa}\|_{L^{\tilde{s}}_t}\\
\text{\scriptsize by \eqref{eq-12}}&\le\nu^{-1}C_{*}\mu^{1+\frac{d-1}{p'}-\frac{d-1}{q}}\kappa^{\frac{\tilde{s}-s'}{s'\tilde{s}}}\\
\text{\scriptsize by \eqref{eq-22}}&\le\nu^{-1}C_{*}\mu^{\gamma_2}
\end{align*}
where $\gamma_2=(d-1)[1-\frac{p-1}{2(p+1)p}-(\frac{1}{p}+\frac{1}{q})]<0$ by the assumption \eqref{eq-assum}.
\end{proof}

\begin{lemma}\label{lem4-7}
For $N=\lceil\frac{\alpha s'\tilde{s}}{\beta s(s'-\tilde{s})}+\frac{d}{\beta}\rceil$ and $\mu$ large enough, we have
\begin{equation*}
\Big\vert \int_{\mathbb{T}^d}\theta(t,x)\phi(x)\,dx\Big\vert \le\delta\|\phi\|_{C^N}\quad\forall t\in[0,1],\,\forall\phi\in C^{\infty}(\mathbb{T}^d).
\end{equation*}
Moreover $\operatorname{supp}_t\theta\in I_r$.
\end{lemma}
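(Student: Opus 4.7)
My plan is to split $\theta=\theta_p+\theta_c+\theta_o$ and bound the three contributions separately, since only the principal part $\theta_p$ oscillates fast in space and genuinely requires the stationary phase lemma; the corrector $\theta_c$ and temporal part $\theta_o$ already have uniform-in-$(t,x)$ estimates that go to zero with $\mu$.

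The heart of the argument is $\theta_p$. I would write
\[
\int_{\mathbb{T}^d}\theta_p(t,x)\phi(x)\,dx=\tilde g_\kappa(\lambda t)\sum_j\int_{\mathbb{T}^d}(a_j(t,\cdot)\phi)(x)\,\Phi_j^\mu(\sigma x)\,dx,
\]
and invoke Lemma~\ref{lem2.4} (which requires $\Phi_j^\mu\in C_0^\infty$, available from Proposition~\ref{prn2.5}): this yields a factor $\sigma^{-N}$ at the cost of $\|a_j(t,\cdot)\phi\|_{C^N}\lesssim_N\|a_j\|_{C^N}\|\phi\|_{C^N}$ by the Leibniz rule and the bound \eqref{eq-18}, together with $\|\Phi_j^\mu\|_{L^2}\lesssim\mu^{(d-1)/p-(d-1)/2}$ from \eqref{eq-6}. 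Using $\|\tilde g_\kappa\|_{L^\infty_t}\lesssim\kappa^{1/s}$ from \eqref{eq-12}, this gives
\[
\Big|\int_{\mathbb{T}^d}\theta_p(t,x)\phi(x)\,dx\Big|\le C_*\nu\,\kappa^{1/s}\sigma^{-N}\mu^{(d-1)/p-(d-1)/2}\|\phi\|_{C^N}.
\]
With the parameter choices $\sigma\sim\mu^\beta$ and $\kappa\sim\mu^{\alpha s'\tilde s/(s'-\tilde s)}$ from Sec.~\ref{sec.4.3}, the exponent of $\mu$ is $\tfrac{\alpha s'\tilde s}{s(s'-\tilde s)}-N\beta+\tfrac{d-1}{p}-\tfrac{d-1}{2}$, which becomes strictly negative for $N\ge\lceil\tfrac{\alpha s'\tilde s}{\beta s(s'-\tilde s)}+\tfrac{d}{\beta}\rceil$ (since $d>\tfrac{d-1}{p}-\tfrac{d-1}{2}$); hence this term is $\le\tfrac{\delta}{3}\|\phi\|_{C^N}$ once $\mu$ is taken large.

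For $\theta_c$ and $\theta_o$ I would avoid oscillatory analysis entirely: Lemma~\ref{lem4-2} gives $\|\theta_c\|_{L^\infty_t}\le C_*\nu\mu^{-(d-1)/p'-(d-1)/2}$ and Lemma~\ref{lem4-3} gives $\|\theta_o\|_{L^\infty_{t,x}}\le C_*\lambda^{-1}$, so
\[
\Big|\int_{\mathbb{T}^d}\theta_c\phi\,dx\Big|+\Big|\int_{\mathbb{T}^d}\theta_o\phi\,dx\Big|\lesssim\bigl(\|\theta_c\|_{L^\infty_t}+\|\theta_o\|_{L^\infty_{t,x}}\bigr)\|\phi\|_{L^1}\le C_*\mu^{-c}\|\phi\|_{C^N}
\]
for some $c>0$, which is absorbed by $\tfrac{2\delta}{3}\|\phi\|_{C^N}$ for $\mu$ sufficiently large.

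The temporal support statement is immediate from the construction: each of $\theta_p,\theta_c,\theta_o$ inherits its support in $t$ from the factors $\chi_j$ or $\chi_j^2$ (note that $\theta_c=-\fint\theta_p$ inherits the support of $\theta_p$), and by \eqref{eq-14} these are supported in $I_{r/2}$, so $\operatorname{supp}_t\theta\subset I_{r/2}\subset(0,1)$, which is $\subset I_{r'}$ for any $r'<r/2$. The only subtle point in the whole proof is the bookkeeping for $N$, i.e.\ verifying that the chosen $N$ makes the $\theta_p$ exponent negative; all other estimates are essentially $L^\infty$ bounds already in hand from Lemmas~\ref{lem4-2}--\ref{lem4-3}.
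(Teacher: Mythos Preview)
Your proof is correct and follows essentially the same route as the paper: the three-way split $\theta=\theta_p+\theta_c+\theta_o$, the use of Lemma~\ref{lem2.4} on $a_j\phi$ against $\Phi_j^\mu(\sigma\cdot)$ for the principal part, and the direct $L^\infty$ bounds from Lemmas~\ref{lem4-2}--\ref{lem4-3} for the correctors. Your remark that the temporal support actually lies in $I_{r/2}$ (rather than $I_r$) is in fact more accurate than the paper's own phrasing; since Proposition~\ref{prn3.1} only requires $\operatorname{supp}_t\theta\subset I_{r'}$ for \emph{some} $r'>0$, this is harmless.
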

\begin{proof}
By the definition, it is obvious that $\operatorname{supp}_t\theta\in I_r$. Notice
\begin{align*}
&\Big\vert \int_{\mathbb{T}^d}\theta(t,x)\phi(x)\,dx\Big\vert\\
&\le\Big\vert \int_{\mathbb{T}^d}\theta_p(t,x)\phi(x)\,dx\Big\vert +\Big\vert \int_{\mathbb{T}^d}\theta_c(t,x)\phi(x)\,dx\Big\vert +\Big\vert \int_{\mathbb{T}^d}\theta_o(t,x)\phi(x)\,dx\Big\vert.
\end{align*}
By Lemma~\ref{lem2.4}, Proposition~\ref{prn2.5} and notice $N>\frac{\alpha s'\tilde{s}}{\beta s(s'-\tilde{s})}+\frac{d}{\beta}$, we have
\begin{align*}
\Big\vert \int_{\mathbb{T}^d}\theta_p(t,x)\phi(x)\,dx\Big\vert &\le\sigma^{-N}\|\tilde{g}_{\kappa}\|_{L^{\infty}}\sum\nolimits_{j}\|a_j\phi\|_{C^N}\|\Phi_j^{\mu}\|_{L^2}\\
\text{\scriptsize by \eqref{eq-12},\eqref{eq-18}}&\le \nu C_{*}\sigma^{-N}\kappa^{\frac{1}{s}}\mu^{\frac{d-1}{p}-\frac{d-1}{2}}\|\phi\|_{C^N}\\
\text{\scriptsize by \eqref{eq-22},\eqref{eq-23}}&\le \nu C_{*}\mu^{-\frac{d-1}{p'}-\frac{d-1}{2}}\|\phi\|_{C^N}.
\end{align*}
By Lemma~\ref{lem4-2}, we have
\begin{equation*}
\Big\vert \int_{\mathbb{T}^d}\theta_c(t,x)\phi(x)\,dx\Big\vert \le\|\theta_c\|_{L^{\infty}_{t,x}}\|\phi\|_{L^{\infty}}\le \nu C_{*}\mu^{-\frac{d-1}{p'}-\frac{d-1}{2}}\|\phi\|_{L^{\infty}}.
\end{equation*}
Similarly, by Lemma~\ref{lem4-3}, we have
\begin{equation*}
\Big\vert \int_{\mathbb{T}^d}\theta_o(t,x)\phi(x)\,dx\Big\vert \le\|\theta_o\|_{L^{\infty}_{t,x}}\|\phi\|_{L^{\infty}}\le C_{*}\lambda^{-1}\|\phi\|_{L^{\infty}}.
\end{equation*}
In conclusion, we have
\begin{equation*}
\Big\vert \int_{\mathbb{T}^d}\theta(t,x)\phi(x)\,dx\Big\vert \le \nu C_{*}\Big(\mu^{-\frac{d-1}{p'}-\frac{d-1}{2}}+\nu^{-1}\lambda^{-1}\Big)\|\phi\|_{C^N}.
\end{equation*}
\end{proof}

\subsection{Estimates on the new defect field}

\begin{lemma}[Estimate on $\boldsymbol{R}_{\rm tem}$]\label{lem-924-4.8}
For $\mu$ large enough, we have
\begin{equation}\label{eq-25}
\|\boldsymbol{R}_{\rm tem}\|_{L^{1}_{t,x}}\le\frac{\delta}{10}.
\end{equation}
\end{lemma}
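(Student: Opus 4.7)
The plan is to apply the product rule to $\boldsymbol{R}_{\rm tem}$ and split it as $I_1+I_2$, where
\begin{equation*}
I_1:=\lambda\sum\nolimits_j(\partial_t\tilde{g}_\kappa)(\lambda t)\,\mathcal{B}(a_j,\Phi_j^\mu(\sigma\cdot)),\qquad I_2:=\tilde{g}_\kappa(\lambda t)\sum\nolimits_j\mathcal{B}(\partial_t a_j,\Phi_j^\mu(\sigma\cdot)).
\end{equation*}
I expect $I_1$ to dominate, since its temporal factor picks up the large cost $\lambda\|\partial_t\tilde{g}_\kappa\|_{L^1_t}\simeq\lambda\kappa^{1/s}$ by \eqref{eq-12}, whereas $I_2$ only carries $\|\tilde{g}_\kappa\|_{L^1_t}\simeq\kappa^{-1/s'}$. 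For both terms the spatial piece is controlled via Lemma~\ref{lem2.2} together with the scaling identity $\mathcal{R}[f(\sigma\cdot)]=\sigma^{-1}(\mathcal{R}f)(\sigma\cdot)$, yielding $\|\mathcal{B}(a,\Phi_j^\mu(\sigma\cdot))\|_{L^1_x}\lesssim\|a\|_{C^1_x}\sigma^{-1}\|\mathcal{R}\Phi_j^\mu\|_{L^1}$, while the amplitude factors are bounded uniformly by $C_*\nu$ via \eqref{eq-18}.

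The hard part will be extracting sufficient $\mu$-decay from $\|\mathcal{R}\Phi_j^\mu\|_{L^1}$. The direct application of Lemma~\ref{lem2.1} only yields $\lesssim\|\Phi_j^\mu\|_{L^1}\simeq\mu^{-(d-1)/p'}$, which turns out to be exactly one power of $\mu$ short of what the parameter balance requires. To recover the missing factor I will route through the Mikado potential: since $\Phi_j^\mu=\operatorname{div}\boldsymbol{\Omega}_j^\mu$, the Calder\'on--Zygmund bound \eqref{eq-5} gives $\|\mathcal{R}\Phi_j^\mu\|_{L^r}=\|\mathcal{R}(\operatorname{div}\boldsymbol{\Omega}_j^\mu)\|_{L^r}\lesssim\|\boldsymbol{\Omega}_j^\mu\|_{L^r}$ for every $1<r<\infty$, and Proposition~\ref{prn2.5}(i) provides $\|\boldsymbol{\Omega}_j^\mu\|_{L^r}\lesssim\mu^{-1+(d-1)/p-(d-1)/r}$. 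Combining with the embedding $L^r_x(\mathbb{T}^d)\hookrightarrow L^1_x(\mathbb{T}^d)$ and sending $r\to 1^+$ produces
\begin{equation*}
\|\mathcal{R}\Phi_j^\mu\|_{L^1}\lesssim\mu^{-1-(d-1)/p'+\eta}\quad\text{for any }\eta>0,
\end{equation*}
i.e.~the extra $\mu^{-1}$ improvement over Lemma~\ref{lem2.1}'s bare bound.

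The last step is verifying the parameter balance. With $\sigma=\lfloor\mu^\beta\rfloor$, $\lambda=\lfloor\mu^{\beta/2}\rfloor$, $\kappa=\mu^{\alpha s'\tilde{s}/(s'-\tilde{s})}$, and $\alpha=1+2\beta$ as prescribed in Section~\ref{sec.4.3}, the condition on $\tilde{s}$ in \eqref{eq-assum} is equivalent to $s/\tilde{s}-(s-1)\ge(1+2\beta)/(1+4\beta)$, so
\begin{equation*}
\frac{\alpha s'\tilde{s}}{s(s'-\tilde{s})}=\frac{\alpha}{s/\tilde{s}-(s-1)}\le\frac{(1+2\beta)(1+4\beta)}{1+2\beta}=1+4\beta,
\end{equation*}
i.e.~$\kappa^{1/s}\le\mu^{1+4\beta}$. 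Plugging all ingredients together, the $\mu$-exponent in the resulting bound for $\|I_1\|_{L^1_{t,x}}$ is at most
\begin{equation*}
-\beta+\tfrac{\beta}{2}+(1+4\beta)-1-\tfrac{d-1}{p'}+\eta=\tfrac{7\beta}{2}-\tfrac{d-1}{p'}+\eta,
\end{equation*}
and using $\beta=(p-1)(d-1)/(4p(p+1))$ and $(d-1)/p'=(p-1)(d-1)/p$ this simplifies to $-(p-1)(d-1)(8p+1)/(8p(p+1))+\eta$, which is strictly negative for sufficiently small $\eta$ whenever $p>1$ and $d\ge 3$. The analogous exponent for $I_2$ is strictly smaller, since $\kappa^{-1/s'}\ll\lambda\kappa^{1/s}$. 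Hence taking $\mu$ large enough yields $\|\boldsymbol{R}_{\rm tem}\|_{L^1_{t,x}}\le\delta/10$, as required.
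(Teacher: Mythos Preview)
Your proof is correct and follows essentially the same route as the paper: the same splitting $\boldsymbol{R}_{\rm tem}=I_1+I_2$, the same use of Lemma~\ref{lem2.2} with the scaling identity for $\mathcal{R}$, and the same key idea of passing through the Mikado potential $\Phi_j^\mu=\operatorname{div}\boldsymbol{\Omega}_j^\mu$ together with the Calder\'on--Zygmund bound \eqref{eq-5} to gain the crucial extra factor $\mu^{-1}$. The only cosmetic difference is the Lebesgue exponent chosen in that step: the paper takes the specific value $r=(p+1)/2$, which makes the spatial exponent collapse neatly to $-1-4\beta$ and yields the clean final bound $C_*\mu^{-\beta/2}$, whereas you send $r\to1^+$ and carry an $\eta>0$ (the paper in fact mentions this variant parenthetically for the alternative parameter regime \eqref{eq-assum-2}); both choices give a strictly negative $\mu$-exponent, yours being the sharper one.
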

\begin{remark}
$p>1$ (for the C-Z inequality \eqref{eq-5}) and the second condition of \eqref{eq-assum} are required in this estimate.
\end{remark}
\begin{proof}
\begin{align*}
\boldsymbol{R}_{\rm tem}&=\partial_t(\tilde{g}_{\kappa}(\lambda t))\sum\nolimits_{j}\mathcal{B}(a_j,\Phi_j^{\mu}(\sigma x))+\tilde{g}_{\kappa}(\lambda t)\sum\nolimits_{j}\mathcal{B}(\partial_ta_j,\Phi_j^{\mu}(\sigma x))\\
&=:\boldsymbol{R}_{\rm tem,1}+\boldsymbol{R}_{\rm tem,2}.
\end{align*}
For $\boldsymbol{R}_{\rm tem,1}$, by Lemma~\ref{lem2.2} and notice $\mathcal{R}[\Phi_j^{\mu}(\sigma\cdot)](x)=\sigma^{-1}(\mathcal{R}\Phi_j^{\mu})(\sigma x)$, we have
\begin{align*}
\|\boldsymbol{R}_{\rm tem,1}(t)\|_{L^1}&\le\lambda(\partial_t\tilde{g}_{\kappa})(\lambda t)\sum\nolimits_{j}\|\mathcal{B}(a_j,\Phi_j^{\mu}(\sigma \cdot))\|_{L^1}\\
\text{\scriptsize by \eqref{eq-18}}&\le\lambda(\partial_t\tilde{g}_{\kappa})(\lambda t)\sum\nolimits_{j}C_{*}\|\mathcal{R}(\Phi_j^{\mu}(\sigma \cdot))\|_{L^1}\\
&\le\lambda(\partial_t\tilde{g}_{\kappa})(\lambda t)\sum\nolimits_{j}C_{*}\sigma^{-1}\|\mathcal{R}\Phi_j^{\mu}\|_{L^1}\\
\text{\scriptsize by \eqref{eq-7}}&\le\lambda\sigma^{-1}(\partial_t\tilde{g}_{\kappa})(\lambda t)\sum\nolimits_{j}C_{*}\|\mathcal{R}(\operatorname{div}\boldsymbol{\Omega}_j^{\mu})\|_{L^1}\\
\text{\scriptsize by \eqref{eq-5} and $p>1$}&\le\lambda\sigma^{-1}(\partial_t\tilde{g}_{\kappa})(\lambda t)\sum\nolimits_{j}C_{*}\|\boldsymbol{\Omega}_j^{\mu}\|_{L^{\frac{p+1}{2}}}.
\end{align*}
(When under the conditions \eqref{eq-assum-2}, we apply \eqref{eq-5} to the last inequality with $r>1$ sufficiently close to 1). Hence by \eqref{eq-12} and Proposition~\ref{prn2.5}
\begin{align*}
\|\boldsymbol{R}_{\rm tem,1}\|_{L^{1}_{t,x}}&\le\lambda\sigma^{-1}\|\partial_t\tilde{g}_{\kappa}\|_{L^1_t}\sum\nolimits_{j}C_{*}\|\boldsymbol{\Omega}_j^{\mu}\|_{L^{\frac{p+1}{2}}}\\
&\le C_{*}\lambda\sigma^{-1}\kappa^{1/s}\mu^{-1+\frac{d-1}{p}-\frac{2(d-1)}{p+1}}\\
\text{\scriptsize by \eqref{eq-22},\eqref{eq-23}}&\le C_{*}\sigma^{-1/2}\mu^{1+4\beta}\mu^{-1-4\beta}\\
&\le C_{*}\mu^{-\beta/2}.
\end{align*}
where we have used $\kappa^{1/s}\le \mu^{1+4\beta}$ by the second assumption of \eqref{eq-assum}. Hence take $\mu$ large enough, we have
\begin{equation}\label{eq-26}
\|\boldsymbol{R}_{\rm tem,1}\|_{L^{1}_{t,x}}\le\frac{\delta}{20}.
\end{equation}

For $\boldsymbol{R}_{\rm tem,2}$, we have by Lemma~\ref{lem2.2}
\begin{align*}
\|\boldsymbol{R}_{\rm tem,2}(t)\|_{L^1}&\le \tilde{g}_{\kappa}(\lambda t)\sum\nolimits_{j}\|\mathcal{B}(\partial_ta_j,\Phi_j^{\mu}(\sigma \cdot))\|_{L^1}\\
&\le \tilde{g}_{\kappa}(\lambda t)\sum\nolimits_{j}C_{*}\|\mathcal{R}(\Phi_j^{\mu}(\sigma \cdot))\|_{L^1}\\
&\le \sigma^{-1}\tilde{g}_{\kappa}(\lambda t)\sum\nolimits_{j}C_{*}\|\Phi_j^{\mu}\|_{L^1}.
\end{align*}
Now by Proposition~\ref{prn2.5} and \eqref{eq-12}, we have
\begin{equation*}
\|\boldsymbol{R}_{\rm tem,2}\|_{L^{1}_{t,x}}\le C_{*}\sigma^{-1}\kappa^{-\frac{1}{s'}}\mu^{-\frac{d-1}{p'}},
\end{equation*}
take $\mu$ large enough, we have
\begin{equation}\label{eq-27}
\|\boldsymbol{R}_{\rm tem,2}\|_{L^{1}_{t,x}}\le\frac{\delta}{20}.
\end{equation}
Adding \eqref{eq-26},\eqref{eq-27} results in \eqref{eq-25}.
\end{proof}

\begin{lemma}[Estimate on $\boldsymbol{R}_{\rm lin}$]\label{}
For $\mu$ large enough, we have
\begin{equation*}
\|\boldsymbol{R}_{\rm lin}\|_{L^{1}_{t,x}}\le\frac{\delta}{10}.
\end{equation*}
\end{lemma}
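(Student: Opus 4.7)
The strategy is to split $\boldsymbol{R}_{\rm lin} = \theta\boldsymbol{u} + \rho\boldsymbol{w}$ and exploit that the smooth old solution $(\rho,\boldsymbol{u})$ satisfies $\|\rho\|_{L^\infty_{t,x}} + \|\boldsymbol{u}\|_{L^\infty_{t,x}} \le C_*$, so
$$\|\boldsymbol{R}_{\rm lin}\|_{L^1_{t,x}} \le C_*\bigl(\|\theta\|_{L^1_{t,x}} + \|\boldsymbol{w}\|_{L^1_{t,x}}\bigr).$$
Hence it suffices to show each of the six summands of $\theta=\theta_p+\theta_c+\theta_o$ and $\boldsymbol{w}=\boldsymbol{w}_p+\boldsymbol{w}_c$ vanishes in $L^1_{t,x}$ as $\mu\to\infty$. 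One must avoid the naive pairing $\|\theta\|_{L^s_tL^p_x}\|\boldsymbol{u}\|_{L^{s'}_tL^{p'}_x}$: Lemma~\ref{lem4-1} only controls $\|\theta\|_{L^s_tL^p_x}$ by $\nu\|\boldsymbol{R}\|_{L^1_{t,x}}^{1/p}$, a quantity fixed in terms of the previous defect and not small in $\delta$. The needed decay comes instead from the $L^1$-concentration of Mikado profiles: by Proposition~\ref{prn2.5}(i),
$$\|\Phi_j^\mu\|_{L^1}\lesssim\mu^{-(d-1)/p'},\qquad \|\boldsymbol{W}_j^\mu\|_{L^1}\lesssim\mu^{-(d-1)/p},$$
both of which vanish since $1<p<\infty$.

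For the principal parts I would apply the improved Hölder inequality (Lemma~\ref{lem2.3}) with $r=1$ to each summand $a_j(t,\cdot)\Phi_j^\mu(\sigma\cdot)$ and $b_j(t,\cdot)\boldsymbol{W}_j^\mu(\sigma\cdot)$, together with the bounds $\|a_j\|_{C^1}\le C_*\nu$ and $\|b_j\|_{C^1}\le C_*\nu^{-1}$ from \eqref{eq-18}, and then integrate in time using $\|\tilde g_\kappa\|_{L^1_t}\lesssim\kappa^{-1/s'}$ and $\|\bar g_\kappa\|_{L^1_t}\lesssim\kappa^{-1/s}$ (combining \eqref{eq-12} with the 1-periodic rescaling). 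This gives
$$\|\theta_p\|_{L^1_{t,x}}\lesssim C_*\nu\,\kappa^{-1/s'}\mu^{-(d-1)/p'},\qquad \|\boldsymbol{w}_p\|_{L^1_{t,x}}\lesssim C_*\nu^{-1}\kappa^{-1/s}\mu^{-(d-1)/p},$$
both of which tend to $0$ as $\mu\to\infty$. For the three correctors the existing lemmas already suffice: Lemma~\ref{lem4-2} gives $\|\theta_c\|_{L^1_{t,x}}\le\|\theta_c\|_{L^\infty_t}\to 0$, Lemma~\ref{lem4-3} gives $\|\theta_o\|_{L^1_{t,x}}\le\|\theta_o\|_{L^\infty_{t,x}}\lesssim\lambda^{-1}\to 0$, and Lemma~\ref{lem4-6} combined with the finite measure of $[0,1]\times\mathbb{T}^d$ gives $\|\boldsymbol{w}_c\|_{L^1_{t,x}}\lesssim\|\boldsymbol{w}_c\|_{L^{s'}_tL^{p'}_x}\lesssim C_*\nu^{-1}\sigma^{-1}\to 0$. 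Summing all six contributions and taking $\mu$ large enough then produces $\|\boldsymbol{R}_{\rm lin}\|_{L^1_{t,x}}\le\delta/10$.

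I do not anticipate a substantive obstacle; the estimate is essentially bookkeeping in the spirit of \cite{CL21}. The only conceptual point is to pair the $L^\infty$-boundedness of the smooth prior iterate $(\rho,\boldsymbol{u})$ against the vanishing $L^1$-mass of the Mikado profiles rather than against their $L^p$/$L^{p'}$ norms, which is precisely where the hypothesis $p>1$ enters this step and where one sees that the desired smallness is driven by $\mu\to\infty$ rather than by the iteration parameter $\delta$ itself.
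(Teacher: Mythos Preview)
Your argument is correct and follows the same overall strategy as the paper: bound $\|\boldsymbol{R}_{\rm lin}\|_{L^1_{t,x}}$ by $C_*(\|\theta\|_{L^1_{t,x}}+\|\boldsymbol{w}\|_{L^1_{t,x}})$ and then show each piece vanishes as $\mu\to\infty$. Your treatment of $\theta_p$, $\theta_c$, $\theta_o$ is essentially identical to the paper's (the paper merges $\theta_p+\theta_c$ via $\|\theta_c\|_{L^1}\le\|\theta_p\|_{L^1}$, and uses a plain $L^\infty$--$L^1$ H\"older rather than Lemma~\ref{lem2.3}, but the outcome $\nu C_*\kappa^{-1/s'}\mu^{-(d-1)/p'}$ is the same).

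The one genuine difference is in how you handle $\boldsymbol{w}$. The paper simply reuses the $L^{\tilde s}_tW^{1,q}_x$ bounds already established in Lemmas~\ref{lem4-5} and~\ref{lem4-6}, writing $\|\boldsymbol{w}\|_{L^1_{t,x}}\le\|\boldsymbol{w}_p\|_{L^{\tilde s}_tW^{1,q}_x}+\|\boldsymbol{w}_c\|_{L^{\tilde s}_tW^{1,q}_x}\le\nu^{-1}C_*(\mu^{\gamma_1}+\mu^{\gamma_2})$. You instead prove a direct $L^1$ bound $\|\boldsymbol{w}_p\|_{L^1_{t,x}}\lesssim C_*\nu^{-1}\kappa^{-1/s}\mu^{-(d-1)/p}$ via the Mikado concentration $\|\boldsymbol{W}_j^\mu\|_{L^1}\lesssim\mu^{-(d-1)/p}$, and use the $L^{s'}_tL^{p'}_x$ bound from Lemma~\ref{lem4-6} for $\boldsymbol{w}_c$. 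Your route is slightly more self-contained in that it does not invoke the first inequality in \eqref{eq-assum} (which enters only through $\gamma_1,\gamma_2<0$), whereas the paper's route is shorter because it recycles existing lemmas. Both are valid and the difference is cosmetic for the overall argument.
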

\begin{proof}
\begin{align*}
\|\boldsymbol{R}_{\rm lin}\|_{L^{1}_{t,x}}&\le\|\theta\|_{L^{1}_{t,x}} \|\boldsymbol{u}\|_{L^{\infty}_{t,x}}+ \|\boldsymbol{w}\|_{L^{1}_{t,x}}\|\rho\|_{L^{\infty}_{t,x}}.
\end{align*}
Notice by Proposition~\ref{prn2.5} and \eqref{eq-12}
\begin{align*}
\|\theta_p+\theta_c\|_{L^{1}_{t,x}}&\le 2\sum\nolimits_{j}\|\tilde{g}_{\kappa}(\lambda t)a_j(t,x)\Phi_j^{\mu}(\sigma\cdot)\|_{L^1_{t,x}}\\
\text{\scriptsize by \eqref{eq-18}}&\le \nu C_{*}\sum\nolimits_{j}\|\tilde{g}_{\kappa}\|_{L^1_t}\|\Phi_j^{\mu}\|_{L^1}\\
&\le \nu C_{*}\kappa^{-1/s'}\mu^{\frac{(1-p)(d-1)}{p}}.
\end{align*}
By Lemma~\ref{lem4-3}
\begin{equation*}
\|\theta_o\|_{L^{1}_{t,x}}\le\|\theta_o\|_{L^{\infty}_{t,x}}\le C_{*}\lambda^{-1}.
\end{equation*}
By Lemma~\ref{lem4-5} and \ref{lem4-6}
\begin{align*}
\|\boldsymbol{w}\|_{L^{1}_{t,x}}&\le \|\boldsymbol{w}_p\|_{L^{\tilde{s}}_{t}W^{1,q}_x}+\|\boldsymbol{w}_c\|_{L^{\tilde{s}}_{t}W^{1,q}_x}\\
&\le \nu^{-1}C_{*}\mu^{\gamma_1}+\nu^{-1}C_{*}\mu^{\gamma_2}.
\end{align*}
In conclusion, we have
\begin{equation*}
\|\boldsymbol{R}_{\rm lin}\|_{L^{1}_{t,x}}\le C_{*}\Big(\nu \kappa^{-1/s'}\mu^{\frac{(1-p)(d-1)}{p}}+\lambda^{-1}+\nu^{-1}\mu^{\gamma_1}+\nu^{-1}\mu^{\gamma_2}\Big),
\end{equation*}
take $\mu$ large enough, we have
\begin{equation*}
\|\boldsymbol{R}_{\rm lin}\|_{L^{1}_{t,x}}\le\frac{\delta}{10}.
\end{equation*}
\end{proof}

\begin{lemma}[Estimate on $\boldsymbol{R}_{\rm cor}$]\label{}
For $\mu$ large enough, we have
\begin{equation*}
\|\boldsymbol{R}_{\rm cor}\|_{L^{1}_{t,x}}\le\frac{\delta}{10}.
\end{equation*}
\end{lemma}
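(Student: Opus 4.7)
The plan is to split the estimate along the two summands in the definition
$$\boldsymbol{R}_{\rm cor}=\theta\boldsymbol{w}_c+(\theta_o+\theta_c)\boldsymbol{w}_p,$$
bound each by a suitable H\"older pairing, and then invoke the already-proved estimates on the individual pieces. Everything should be a small negative power of $\mu$ after the parameter choices in Sec.~\ref{sec.4.3}, so the final bound $\delta/10$ follows simply by taking $\mu$ large enough.

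For the first term I would apply H\"older with the natural dual pair $(L^s_tL^p_x,\,L^{s'}_tL^{p'}_x)$:
$$\|\theta\,\boldsymbol{w}_c\|_{L^1_{t,x}}\le\|\theta\|_{L^s_tL^p_x}\,\|\boldsymbol{w}_c\|_{L^{s'}_tL^{p'}_x}.$$
By combining Lemmas~\ref{lem4-1}, \ref{lem4-2}, \ref{lem4-3}, the first factor is at most $3\nu\|\boldsymbol{R}\|_{L^1_{t,x}}^{1/p}$, and by Lemma~\ref{lem4-6} the second factor is at most $\nu^{-1}C_{*}\sigma^{-1}$. The $\nu$'s cancel and we gain a factor $\sigma^{-1}$, so this contribution is $C_{*}\sigma^{-1}\|\boldsymbol{R}\|_{L^1_{t,x}}^{1/p}\to 0$ as $\mu\to\infty$.

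For the second term I would instead peel off the two correctors $\theta_o,\theta_c$ in $L^{\infty}_{t,x}$ (where they are very small) and pair them against $\boldsymbol{w}_p$ in $L^1_{t,x}$:
$$\|(\theta_o+\theta_c)\boldsymbol{w}_p\|_{L^1_{t,x}}\le\bigl(\|\theta_o\|_{L^{\infty}_{t,x}}+\|\theta_c\|_{L^{\infty}_{t,x}}\bigr)\|\boldsymbol{w}_p\|_{L^1_{t,x}}.$$
By Lemma~\ref{lem4-3} we have $\|\theta_o\|_{L^{\infty}_{t,x}}\le C_{*}\lambda^{-1}$, and by Lemma~\ref{lem4-2} we have $\|\theta_c\|_{L^{\infty}_{t,x}}\le C_{*}\nu\mu^{-\frac{d-1}{p'}-\frac{d-1}{2}}$. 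For $\|\boldsymbol{w}_p\|_{L^1_{t,x}}$ I just use H\"older in the bounded domain $[0,1]\times\mathbb{T}^d$ to reduce to the $L^{s'}_tL^{p'}_x$ estimate already proved, i.e.~$\|\boldsymbol{w}_p\|_{L^1_{t,x}}\lesssim\|\boldsymbol{w}_p\|_{L^{s'}_tL^{p'}_x}\lesssim\nu^{-1}\|\boldsymbol{R}\|_{L^1_{t,x}}^{1/p'}$. Adding the two contributions gives
$$\|\boldsymbol{R}_{\rm cor}\|_{L^1_{t,x}}\le C_{*}\Bigl(\sigma^{-1}\|\boldsymbol{R}\|_{L^1_{t,x}}^{1/p}+\nu^{-1}(\lambda^{-1}+\nu\mu^{-\frac{d-1}{p'}-\frac{d-1}{2}})\|\boldsymbol{R}\|_{L^1_{t,x}}^{1/p'}\Bigr),$$
and every term on the right is a strictly negative power of $\mu$ by the choices $\sigma\sim\mu^{\beta},\lambda\sim\mu^{\beta/2}$, so choosing $\mu$ large gives the conclusion.

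There is no real obstacle here since every relevant norm has already been estimated in the preceding lemmas; the only mild care is in the \emph{choice of pairing}: using $(L^s_tL^p_x,L^{s'}_tL^{p'}_x)$ for $\theta\boldsymbol{w}_c$ exploits the $\sigma^{-1}$ gain from the anti-divergence in $\boldsymbol{w}_c$, while using $L^{\infty}\times L^1$ for $(\theta_o+\theta_c)\boldsymbol{w}_p$ exploits the smallness of the mean-correctors (which otherwise have no $\nu$ to cancel against $\boldsymbol{w}_p$'s $\nu^{-1}$). I would also verify along the way that the constants $C_{*}$ in each step remain independent of $\mu$ in the sense explained in the Notations subsection, so that the final "$\mu$ large enough" argument is genuinely legitimate.
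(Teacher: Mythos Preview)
Your proposal is correct and follows essentially the same route as the paper: split $\boldsymbol{R}_{\rm cor}$ into $\theta\boldsymbol{w}_c$ and $(\theta_o+\theta_c)\boldsymbol{w}_p$, use the $(L^s_tL^p_x,\,L^{s'}_tL^{p'}_x)$ H\"older pairing on the first piece to exploit the $\sigma^{-1}$ gain from $\boldsymbol{w}_c$, and on the second piece use the smallness of $\theta_o,\theta_c$ against the already-controlled size of $\boldsymbol{w}_p$. The only cosmetic difference is that the paper also writes the second piece in the $(L^s_tL^p_x,\,L^{s'}_tL^{p'}_x)$ pairing rather than your $L^{\infty}\times L^1$, but since the $\theta_o,\theta_c$ bounds in Lemmas~\ref{lem4-2}--\ref{lem4-3} are $L^{\infty}$ bounds and the domain is bounded, the two choices are equivalent and lead to the same final estimate $C_{*}(\sigma^{-1}+\nu^{-1}\lambda^{-1}+\mu^{-\frac{d-1}{p'}-\frac{d-1}{2}})$.
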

\begin{proof}
\begin{equation*}
\|\boldsymbol{R}_{\rm cor}\|_{L^{1}_{t,x}}\le\|\theta\|_{L^{s}_{t}L^{p}_x} \|\boldsymbol{w}_c\|_{L^{s'}_{t}L^{p'}_x}+\|\theta_o+\theta_c\|_{L^{s}_{t}L^{p}_x}\|\boldsymbol{w}\|_{L^{s'}_{t}L^{p'}_x}.
\end{equation*}
Notice by Lemma~\ref{lem4-1}-\ref{lem4-3} and Lemma~\ref{lem4-6}, we have
\begin{equation*}
\|\theta\|_{L^{s}_{t}L^{p}_x} \|\boldsymbol{w}_c\|_{L^{s'}_{t}L^{p'}_x}\le \|\boldsymbol{R}\|_{L^{1}_{t,x}}^{1/p}C_{*}\sigma^{-1}.
\end{equation*}
By Lemma~\ref{lem4-2}-\ref{lem4-6}, we have
\begin{equation*}
\|\theta_o+\theta_c\|_{L^{s}_{t}L^{p}_x}\|\boldsymbol{w}\|_{L^{s'}_{t}L^{p'}_x}\le C_{*}(\lambda^{-1}+\nu\mu^{-\frac{d-1}{p'}-\frac{d-1}{2}})\nu^{-1}\|\boldsymbol{R}\|_{L^{1}_{t,x}}^{1/p^{\prime}}.
\end{equation*}
Hence 
\begin{equation*}
\|\boldsymbol{R}_{\rm cor}\|_{L^{1}_{t,x}}\le C_{*}(\sigma^{-1}+\nu^{-1}\lambda^{-1}+\mu^{-\frac{d-1}{p'}-\frac{d-1}{2}}),
\end{equation*}
take $\mu$ large enough, we have
\begin{equation*}
\|\boldsymbol{R}_{\rm cor}\|_{L^{1}_{t,x}}\le\frac{\delta}{10}.
\end{equation*}
\end{proof}

\begin{lemma}[Estimate on $\boldsymbol{R}_{{\rm osc},x}$]\label{}
For $\mu$ large enough, we have
\begin{equation*}
\|\boldsymbol{R}_{{\rm osc},x}\|_{L^{1}_{t,x}}\le\frac{\delta}{10}.
\end{equation*}
\end{lemma}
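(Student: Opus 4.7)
The plan is to proceed in direct parallel with the proof of Lemma~\ref{lem-924-4.8}: apply Lemma~\ref{lem2.2} to each summand, pull out the $\sigma^{-1}$ from the scaling of $\mathcal{R}$, and control $\|\mathcal{R}(\Phi_j^{\mu}\boldsymbol{W}_j^{\mu}\cdot\boldsymbol{e}_j-1)\|_{L^1}$. The one point where the two proofs diverge is that here we can invoke Lemma~\ref{lem2.1} at the endpoint $r=1$ directly, rather than routing through the Calder\'on--Zygmund inequality~\eqref{eq-5} as in Lemma~\ref{lem-924-4.8}; this is legitimate because Lemma~\ref{lem2.1} is a mapping property of $\mathcal{R}$ itself (an operator of order~$-1$ whose kernel $\nabla G$ lies in $L^1(\mathbb{T}^d)$), not of the composition $\mathcal{R}\operatorname{div}$.

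Concretely, freezing $t$ and applying Lemma~\ref{lem2.2} with $a=\partial_j(a_jb_j)$ and $f=(\Phi_j^{\mu}\boldsymbol{W}_j^{\mu}\cdot\boldsymbol{e}_j-1)(\sigma\cdot)$ (mean-zero in $x$ by~\eqref{eq-8}) gives
\begin{equation*}
\|\boldsymbol{R}_{{\rm osc},x}(t)\|_{L^1}\lesssim |\tilde g_\kappa\bar g_\kappa(\lambda t)|\sum_j\|\partial_j(a_jb_j)\|_{C^1}\,\|\mathcal{R}[(\Phi_j^{\mu}\boldsymbol{W}_j^{\mu}\cdot\boldsymbol{e}_j-1)(\sigma\cdot)]\|_{L^1}.
\end{equation*}
By~\eqref{eq-18}, the $\nu$ in $a_j$ cancels the $\nu^{-1}$ in $b_j$, yielding $\|\partial_j(a_jb_j)\|_{C^1}\le\|a_jb_j\|_{C^2}\le C_*$. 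Combining the rescaling identity $\mathcal{R}[g(\sigma\cdot)](x)=\sigma^{-1}(\mathcal{R}g)(\sigma x)$ with Lemma~\ref{lem2.1} at $m=0,\,r=1$, one obtains
\begin{equation*}
\|\mathcal{R}[(\Phi_j^{\mu}\boldsymbol{W}_j^{\mu}\cdot\boldsymbol{e}_j-1)(\sigma\cdot)]\|_{L^1}=\sigma^{-1}\|\mathcal{R}(\Phi_j^{\mu}\boldsymbol{W}_j^{\mu}\cdot\boldsymbol{e}_j-1)\|_{L^1}\lesssim\sigma^{-1}\|\Phi_j^{\mu}\boldsymbol{W}_j^{\mu}\cdot\boldsymbol{e}_j-1\|_{L^1}\le 2\sigma^{-1},
\end{equation*}
where the last step uses that $\Phi_j^{\mu}\boldsymbol{W}_j^{\mu}\cdot\boldsymbol{e}_j=\mu^{d-1}(\phi^\mu)^2\ge 0$ has spatial integral~$1$ by~\eqref{eq-8}.

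Finally, integrating in $t$ and using $\|\tilde g_\kappa\bar g_\kappa(\lambda\cdot)\|_{L^1_t}=1$ (which is~\eqref{eq-12} combined with the $\lambda$-periodization for $\lambda\in\mathbb{N}$ and the nonnegativity of $\tilde g_\kappa\bar g_\kappa=\kappa g_\kappa^2$) gives
\begin{equation*}
\|\boldsymbol{R}_{{\rm osc},x}\|_{L^1_{t,x}}\lesssim C_*\sigma^{-1}\lesssim C_*\mu^{-\beta},
\end{equation*}
which is $\le\delta/10$ for $\mu$ large enough. There is no real obstacle in this estimate; the only conceptual point worth flagging is that the centering subtraction in $\Phi_j^{\mu}\boldsymbol{W}_j^{\mu}\cdot\boldsymbol{e}_j-1$ is precisely what makes the profile $L^1$-bounded uniformly in $\mu$, even though $\Phi_j^{\mu}\boldsymbol{W}_j^{\mu}\cdot\boldsymbol{e}_j$ itself has $C^0$ norm blowing up like $\mu^{d-1}$.
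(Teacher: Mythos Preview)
Your proof is correct and follows essentially the same route as the paper: apply Lemma~\ref{lem2.2}, extract $\sigma^{-1}$ from the scaling of $\mathcal{R}$, bound $\|\mathcal{R}(\Phi_j^{\mu}\boldsymbol{W}_j^{\mu}\cdot\boldsymbol{e}_j-1)\|_{L^1}$ via Lemma~\ref{lem2.1}, and integrate against $\|\tilde g_\kappa\bar g_\kappa\|_{L^1_t}\le 1$. The only cosmetic difference is that the paper bounds $\|\Phi_j^{\mu}\boldsymbol{W}_j^{\mu}\cdot\boldsymbol{e}_j-1\|_{L^1}$ by H\"older ($\|\Phi_j^{\mu}\|_{L^p}\|\boldsymbol{W}_j^{\mu}\|_{L^{p'}}+1\lesssim 1$), whereas you use nonnegativity and~\eqref{eq-8} to get the sharper constant~$2$; both are fine.
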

\begin{proof}
Denote $\Theta_j(\sigma x):=\Phi_j^{\mu}(\sigma x)\boldsymbol{W}_j^{\mu}(\sigma x)\cdot\boldsymbol{e}_j-1$. By Lemma~\ref{lem2.1}-\ref{lem2.2} and notice $\mathcal{R}[\Theta_j(\sigma\cdot)](x)=\sigma^{-1}(\mathcal{R}\Theta_j)(\sigma x)$, we have
\begin{align*}
\|\boldsymbol{R}_{{\rm osc},x}(t)\|_{L^1}&\le\vert \tilde{g}_{\kappa}\bar{g}_{\kappa}(\lambda t)\vert \sum\nolimits_{j}\|\mathcal{B}(\partial_j(a_jb_j),\Theta_j(\sigma\cdot))\|_{L^1}\\
&\le\vert \tilde{g}_{\kappa}\bar{g}_{\kappa}(\lambda t)\vert \sum\nolimits_{j}\|a_jb_j\|_{C^2}\|\mathcal{R}(\Theta_j(\sigma\cdot))\|_{L^1}\\
\text{\scriptsize by \eqref{eq-18}}&\le C_{*}\vert \tilde{g}_{\kappa}\bar{g}_{\kappa}(\lambda t)\vert \sum\nolimits_{j}\sigma^{-1}\|\Theta_j\|_{L^1}\\
&\le C_{*}\vert \tilde{g}_{\kappa}\bar{g}_{\kappa}(\lambda t)\vert \sum\nolimits_{j}\sigma^{-1}\Big(\|\Phi_j^{\mu}\|_{L^p}\|\boldsymbol{W}_j^{\mu}\|_{L^{p'}}+1\Big).
\end{align*}
By Proposition~\ref{prn2.5}, we have $\|\Phi_j^{\mu}\|_{L^p}\|\boldsymbol{W}_j^{\mu}\|_{L^{p'}}\lesssim 1$. Hence by \eqref{eq-12}, we finally obtain
\begin{equation*}
\|\boldsymbol{R}_{{\rm osc},x}\|_{L^1_{t,x}}\le C_{*}\sigma^{-1}\|\tilde{g}_{\kappa}\bar{g}_{\kappa}\|_{L^1_t}\le C_{*}\sigma^{-1}.
\end{equation*}
\end{proof}

\begin{lemma}[Estimate on $\boldsymbol{R}_{{\rm osc},t}$]\label{}
For $\mu$ large enough, we have
\begin{equation*}
\|\boldsymbol{R}_{{\rm osc},t}\|_{L^{1}_{t,x}}\le\frac{\delta}{10}.
\end{equation*}
\end{lemma}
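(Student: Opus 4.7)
The definition gives
$$\boldsymbol{R}_{{\rm osc},t} = \lambda^{-1} h_\kappa(\lambda t) \sum_{j=1}^d \partial_t(\chi_j^2 R_j) \boldsymbol{e}_j,$$
which carries an explicit prefactor $\lambda^{-1}$. My plan is simply to absorb everything else into a constant $C_*$ independent of $\mu$ and then exploit $\lambda \to \infty$. First I would pull the $\lambda^{-1}$ out of the norm, estimate $h_\kappa(\lambda t)$ in $L^\infty_t$ via \eqref{eq-13}, and bound each $\partial_t(\chi_j^2 R_j)$ crudely in $L^\infty_{t,x}$. Here $\chi_j$ and $R_j$ are smooth functions whose $C^1_{t,x}$ norms depend only on the old solution $(\rho,\boldsymbol{u},\boldsymbol{R})$ and on the cutoff parameter $r$ (which is fixed by $\delta$ via \eqref{eq-15}) but not on the convex-integration parameters $\mu,\kappa,\sigma,\lambda$, so such bounds produce an admissible constant $C_*$.

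Concretely, componentwise triangle inequality followed by $\|h_\kappa\|_{L^\infty_t}\le 1$ should yield
\begin{align*}
\|\boldsymbol{R}_{{\rm osc},t}\|_{L^1_{t,x}}
&\le \lambda^{-1}\|h_\kappa\|_{L^\infty_t}\sum_{j=1}^d\|\partial_t(\chi_j^2 R_j)\|_{L^1_{t,x}} \\
&\le \lambda^{-1}\sum_{j=1}^d\|\partial_t(\chi_j^2 R_j)\|_{L^\infty_{t,x}} \\
&\le C_*\lambda^{-1}.
\end{align*}
Since $\lambda=\lfloor\mu^{\beta/2}\rfloor\to\infty$ with $\mu$, it then suffices to take $\mu$ large enough that $C_*\lambda^{-1}\le\delta/10$, which gives the claim.

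In contrast to Lemma~\ref{lem-924-4.8}, where a careful accounting of the powers of $\mu,\kappa,\sigma,\lambda$ against the Calder\'on--Zygmund loss and the second constraint in \eqref{eq-assum} was essential, I anticipate no real obstacle here: the estimate is almost by inspection. This transparency is exactly the point of engineering the temporal corrector $\theta_o$ with the prefactor $\lambda^{-1}$ in the first place, so that after integrating the identity $\partial_t\theta_o=[\partial_th_\kappa](\lambda t)\operatorname{div}\sum_j\chi_j^2R_j\boldsymbol{e}_j+\operatorname{div}\boldsymbol{R}_{{\rm osc},t}$, the stray time-derivative contribution $\boldsymbol{R}_{{\rm osc},t}$ automatically inherits a $\lambda^{-1}$ gain and decouples from the dimensional balance that constrains the spatial error terms.
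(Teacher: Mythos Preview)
Your proof is correct and follows essentially the same approach as the paper: pull out $\lambda^{-1}$, bound $h_\kappa$ uniformly via \eqref{eq-13}, and control $\partial_t(\chi_j^2 R_j)$ by a $C_*$ depending only on the old solution. The only cosmetic difference is that the paper bounds $\|h_\kappa\|_{L^1_t}$ (and also cites Lemma~\ref{lem2.1}, which appears to be a harmless misprint since no anti-divergence is involved) whereas you use $\|h_\kappa\|_{L^\infty_t}$ directly; both give the same $C_*\lambda^{-1}$ conclusion.
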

\begin{proof}
We control $\|\partial_t(\chi_j^2R_j)\|_{L^{\infty}_{t,x}}$ simply by a constant $C_*$, then by Lemma~\ref{lem2.1} we obtain
\begin{align*}
\|\boldsymbol{R}_{{\rm osc},t}\|_{L^{1}_{t,x}}&\le\lambda^{-1}\| h_{\kappa}(\lambda t)\sum\nolimits_{j}\partial_t(\chi_j^2R_j)\|_{L^1_{t,x}}\\
&\le C\lambda^{-1}\| h_{\kappa}\|_{L^1_t}\sum\nolimits_{j}\|\partial_t(\chi_j^2R_j)\|_{L^{\infty}_{t,x}}\\
\text{\scriptsize by \eqref{eq-13}}&\le C_{*}\lambda^{-1}.
\end{align*}
\end{proof}

\begin{lemma}[Estimate on $\boldsymbol{R}_{\rm rem}$]\label{lem4-13}
\begin{equation*}
\|\boldsymbol{R}_{\rm rem}\|_{L^{1}_{t,x}}\le\frac{\delta}{2}.
\end{equation*}
\end{lemma}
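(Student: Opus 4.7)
The plan is to exploit directly the cutoff structure built into $\chi_j$ together with the $L^\infty$ bound enforced by the choice of $r$ in \eqref{eq-15}. The integrand $(1-\chi_j^2)R_j$ is supported in the set where $\chi_j<1$, which by the definition \eqref{eq-14} is contained in the union of $\{(t,x):|R_j(t,x)|<\delta/(4d)\}$ and $\{(t,x):t\notin I_r\}$. So I split the space-time domain into these two regions and estimate each contribution pointwise.

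On the first region, I would simply use $0\le 1-\chi_j^2\le 1$ and the pointwise bound $|R_j|<\delta/(4d)$, so $(1-\chi_j^2)|R_j|\le \delta/(4d)$. Integrating over $[0,1]\times\mathbb{T}^d$ (whose measure is $1$) gives a contribution at most $\delta/(4d)$. On the second region $[0,1]\setminus I_r$, I use the chosen smallness \eqref{eq-15}, namely $\|\boldsymbol{R}\|_{L^\infty_{t,x}}\le \delta/(8rd)$, so $(1-\chi_j^2)|R_j|\le \delta/(8rd)$, and since the time-measure of $[0,1]\setminus I_r$ equals $2r$, this contribution is at most $2r\cdot \delta/(8rd)=\delta/(4d)$.

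Adding the two contributions for a fixed $j$ gives
\begin{equation*}
\|(1-\chi_j^2)R_j\|_{L^1_{t,x}}\le\frac{\delta}{4d}+\frac{\delta}{4d}=\frac{\delta}{2d},
\end{equation*}
and then summing over $j=1,\dots,d$ in the definition of $\boldsymbol{R}_{\rm rem}$ yields the claimed bound $\|\boldsymbol{R}_{\rm rem}\|_{L^1_{t,x}}\le\delta/2$. No step is really an obstacle here; the only thing to be careful about is that the two regions together cover $\{\chi_j<1\}$ and that the bound \eqref{eq-15} was indeed guaranteed by the preliminary choice of $r$, which is precisely the reason the cutoff and $r$ were set up in this way in Section~3.
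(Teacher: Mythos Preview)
Your proof is correct and follows essentially the same approach as the paper: both split the region where $\chi_j<1$ into $\{|R_j|<\delta/(4d)\}$ and $\{t\notin I_r\}$, bound each piece by $\delta/(4d)$ using \eqref{eq-14} and \eqref{eq-15}, and sum over $j$ to get $\delta/2$.
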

\begin{proof}
According to the definition \eqref{eq-14} of $\chi_j$ and the choice \eqref{eq-15} of $r$, we have
\begin{align*}
\|\boldsymbol{R}_{\rm rem}\|_{L^{1}_{t,x}}&\le\sum\nolimits_{j}\|(1-\chi_j^2)R_j\boldsymbol{e}_j\|_{L^{1}_{t,x}}\\
&\le\sum\nolimits_{j}\int_{\vert R_j\vert \le\frac{\delta}{4d}}(1-\chi_j^2)\vert R_j\vert \,dxdt+\int_{t\in I^c_{r}}(1-\chi_j^2)\vert R_j\vert \,dxdt\\
&\le d\times(\frac{\delta}{4d}+2r\|\boldsymbol{R}\|_{L^{\infty}_{t,x}})=\frac{\delta}{2}.
\end{align*}
\end{proof}

\subsection{Conclusion}

\noindent\textbf{Proof of Proposition~\ref{prn3.1}.} Combine Lemma~\ref{lem4-1}-\ref{lem4-13}, there exists $\mu_0\ge 8d$ large enough depending on the old solution $(\rho,\boldsymbol{u},\boldsymbol{R})$ and $N,\nu,\delta$ given in Proposition~\ref{prn3.1}, such that for $\mu\ge\mu_0$, we have
\begin{align*}
\|\theta\|_{L^s_tL^p_x}&\lesssim \nu\|\boldsymbol{R}\|_{L^{1}_{t,x}}^{1/p},\\
\|\boldsymbol{w}\|_{L^{s^{\prime}}_tL^{p^{\prime}}_x}&\lesssim\nu^{-1}\|\boldsymbol{R}\|_{L^{1}_{t,x}}^{1/p^{\prime}},\\
\|\boldsymbol{w}\|_{L^{\tilde{s}}_tW^{1,q}_x}&\le \delta,\quad\|\boldsymbol{R}^1\|_{L^{1}_{t,x}}\le \delta.
\end{align*}
The inessential constants in the estimates can be taken as a universal constant $M$, that is 
\begin{equation*}
\|\theta\|_{L^s_tL^p_x}\le\nu M\|\boldsymbol{R}\|_{L^{1}_{t,x}}^{1/p},\quad\|\boldsymbol{w}\|_{L^{s^{\prime}}_tL^{p^{\prime}}_x}\le\nu^{-1}M\|\boldsymbol{R}\|_{L^{1}_{t,x}}^{1/p^{\prime}}.
\end{equation*}
Notice \eqref{eq-10} follows from Lemma~\ref{lem4-7}, meanwhile \eqref{eq-11} follows from the  definition \eqref{eq-16} of $\theta$ and the fact $\operatorname{supp}\chi_j\subset I_{r/2}$.
 Proposition~\ref{prn3.1} follows. \qed

\section*{Acknowledgments}
This work is supported by the National Natural Science Foundation of China (Grant No.11871024).

\bibliographystyle{abbrv}
\bibliography{mybibfile}
\end{document}